\theoremstyle{plain}
\newtheorem{theorem}{Theorem}[section]
\newtheorem{corollary}[theorem]{Corollary}
\newtheorem{proposition}[theorem]{Proposition}
\newtheorem{lemma}[theorem]{Lemma}
\theoremstyle{definition}
\newtheorem{definition}[theorem]{Definition}
\newtheorem{remark}[theorem]{Remark}
\newtheorem{example}[theorem]{Example}
\theoremstyle{remark}
\newcommand{\mK}{\mathcal{K}}
\newcommand{\dK}{\mathcal{K}_{\text{disg}}}
\newcommand{\gK}{\mathcal{K}_{\text{global}}}
\newcommand\R{\mathbb{R}}
\newcommand\bW{\boldsymbol{W}}
\newcommand\bof{\boldsymbol{f}}
\newcommand\bx{\boldsymbol{x}}
\newcommand\bw{\boldsymbol{w}}
\newcommand\bk{\boldsymbol{k}}
\newcommand\bJ{\boldsymbol{J}}
\newcommand\by{\boldsymbol{y}}
\newcommand\bv{\boldsymbol{v}}
\newcommand\bzeta{\boldsymbol{\zeta}}
\newlength{\LP@lh@boxwidth}      
\newlength{\LP@lh@boxsep}        
\newcommand{\LP@lh@content}{}
\newcommand{\LP@lh@tagname}{}    
                          \parbox{\LP@lh@boxwidth}{\begin{align}\tag{\LP@lh@tagname}\LP@lh@content\end{align}}%
\newcommand{\New}{\textbf{New}}
\newcommand{\defi}[1]{\textbf{#1}}
\newcommand{\spn}{\text{span} \ }
\noindent\makebox[0mm][r]{$\bullet$}}
\title{Realizations through Weakly Reversible Networks and the Globally Attracting Locus}
\author[1]{Samay Kothari}
\author[2]{Jiaxin Jin}
\author[3]{Abhishek Deshpande}
\affil[1,3]{\small Center for Computational Natural Sciences and Bioinformatics, International Institute of Information Technology Hyderabad} 
\affil[2]{\small Department of Mathematics, University of Louisiana at Lafayette}
\begin{document}

\maketitle

\begin{abstract}

We investigate the possibility that for any given reaction rate vector $\bk$ associated with a network $G$, there exists another network $G'$ with a corresponding reaction rate vector that reproduces the mass-action dynamics generated by $(G, \bk)$.
Our focus is on a particular class of networks for $G$, where the corresponding network $G'$ is weakly reversible.  
In particular, we show that strongly endotactic two-dimensional networks with a two-dimensional stoichiometric subspace, as well as certain endotactic networks under additional conditions, exhibit this property.
Additionally, we establish a strong connection between this family of networks and the locus in the space of rate constants of which the corresponding dynamics admits globally stable steady states.
\end{abstract}


\section{Introduction}
\label{sec:intro}

Dynamical systems with polynomial right-hand sides can be represented in the form:
\begin{equation} \notag
\begin{split}
    \frac{dx_1}{dt} &= f_1(x_1, \ldots, x_n), \\ 
    \frac{dx_2}{dt} &= f_2(x_1, \ldots, x_n), \\ 
                    &\qquad \quad \vdots  \\
    \frac{dx_n}{dt} &= f_n(x_1, \ldots, x_n), \\ 
\end{split}
\end{equation}
where each $f_i(x)$  is a polynomial function of the variables $x_1, \ldots, x_n$. 
Under assumptions of well-mixedness and a large number of reactants, systems arising from real-life applications can be modeled using the polynomial system described above.
These include, in particular, chemical and biological interaction networks that exhibit diverse behaviors such as multistability \cite{craciun2005multiple,craciun2006multiple}, bifurcations \cite{wilhelm1995smallest,wilhelm1996mathematical}, oscillations~\cite{boros2024oscillations}, homeostasis~\cite{craciun2022homeostasis}, and chaotic dynamics \cite{erdi1989mathematical}. 

The classical theory of reaction networks, developed by Horn, Jackson, and Feinberg~\cite{feinberg1979lectures,feinberg1987chemical,feinberg2019foundations,horn1972general}, offers a rigorous framework for understanding the relationship between the dynamics of these networks and their underlying structure. For instance, properties such as \emph{persistence} \cite{craciun2013persistence}, \emph{permanence}, and the existence of a \emph{global attractor} \cite{craciun2009toric,ding2021minimal,ding2022minimal} are often intrinsically linked to the structural features of the networks. 
It is important to note that reaction networks can be analyzed from both deterministic~\cite{anderson2011boundedness,anderson2011proof,anderson2010dynamics,conradi2007subnetwork,craciun2020efficient,hell2015dynamical,yu2018mathematical} and stochastic perspectives~\cite{anderson2016product, anderson2015lyapunov}. However, this paper will focus on the deterministic approach.

A notable phenomenon in this context is that different reaction networks can produce identical dynamics due to a concept known as \emph{dynamical equivalence} \cite{deshpande2023source,craciun2020efficient,craciun2023weakly,craciun2023weaklysingle}. It is also referred to as \emph{macro-equivalence} \cite{horn1972general} or \emph{confoundability} \cite{craciun2008identifiability}. 
Consider the following two reaction networks:
\begin{equation} \notag
\begin{split}
& \text{Network 1: \qquad }
\emptyset \xrightarrow[]{k_1} X + Y, \qquad
X + Y \xrightarrow[]{k_2} 2X + Y, \qquad
2X + Y \xrightarrow[]{k_3} \emptyset
\\[5pt]
& \text{Network 2: \qquad }
\emptyset \xrightarrow[]{k_1} X, \qquad
\emptyset \xrightarrow[]{k_1} Y, \qquad
X + Y \xrightarrow[]{k_2} 2X + Y, \qquad
2X + Y \xrightarrow[]{k_3} \emptyset
\end{split}
\end{equation}
Both networks generate the same dynamical system as follows:
\begin{equation}
\begin{split} \notag 
\frac{dx}{dt} & = k_1 + k_2 xy - 2k_3x^2y, \\
\frac{dy}{dt} & = k_1 -  k_3x^2y.  
\end{split}
\end{equation}
However, the networks are structurally different. 
\emph{Network 1 consists of three complexes and a single linkage class, while Network 2 consists of five complexes and three linkage classes}. The structural properties of Network 2 do not align with the theorems from classical reaction network theory, making it difficult to infer the long-term behavior of its dynamics. In contrast, Network 1 is weakly reversible and has a single linkage class, suggesting that its dynamics are permanent and possess globally attracting steady states. Given that both networks are dynamically equivalent, the dynamical system produced by Network 2 shares these properties.

Certain classes of networks, such as \emph{weakly reversible}, \emph{endotactic}, and \emph{strongly endotactic} networks, are known for their robust dynamical properties. Weak reversibility indicates that every connected component of the network is strongly connected. Informally, a network is termed endotactic if its reactions ``point inward'' \cite{craciun2013persistence}. If the network meets certain additional criteria, then it is classified as strongly endotactic \cite{gopalkrishnan2014geometric}. 
In two dimensions, endotactic networks are known to be permanent \cite{craciun2013persistence}, while strongly endotactic networks in any dimension are permanent \cite{gopalkrishnan2014geometric}. Furthermore, weakly reversible networks with a single connected component possess globally attracting steady states~\cite{anderson2011proof}.
Given these classes of networks and their properties, we address the following question:

\medskip

\textbf{Question:} Are there dynamical inclusions between weakly reversible, endotactic, and strongly endotactic networks?

\medskip

This work is part of a broader effort to comprehensively characterize the relationships between different families of reaction networks. We answer the above question affirmatively, showing that, under specific structural conditions, the dynamics of endotactic and strongly endotactic networks can be realized by weakly reversible networks. 
Furthermore, we investigate a class of dynamical systems generated by weakly reversible networks, referred to as \emph{complex-balanced systems}, for which the steady states are conjectured to be globally stable. We then establish their connections to \emph{disguised toric systems}, which are dynamically equivalent to complex-balanced systems, and provide sufficient conditions for a reaction network to have a non-empty locus in the space of rate constants that generate globally stable steady states.

\medskip

\textbf{Structure of the Paper:}
The paper is organized as follows: In Section~\ref{sec:reaction_networks}, we review some fundamental definitions related to reaction networks. 
Section~\ref{sec:dyn_equiv} introduces the concept of dynamical equivalence, a phenomenon where two distinct reaction networks can generate the same dynamical system. 
In Section~\ref{sec:necessary}, we demonstrate that an endotactic network is necessary for its dynamics to be included within that of a weakly reversible network. 
Section~\ref{sec:strongly_endo_two} shows that the dynamics of a strongly endotactic network with a two-dimensional stoichiometric subspace can be realized by the dynamics of a weakly reversible network under a different set of assumptions. 
In Section~\ref{sec:general_strong_endo}, we generalize this result to endotactic networks in any dimensions under certain assumptions.
Section~\ref{sec:toric_global_locus} defines the notion of the \emph{globally stable locus} and provides sufficient conditions for this locus to be non-empty.
In Section~\ref{sec:application_positive}, we provide examples from biological networks and apply the theorems from Section~\ref{sec:sufficient} to show that the globally stable locus of these networks is non-empty.
Finally, in Section~\ref{sec:disc}, we summarize our findings and suggest directions for future research.


\section{Basic Terminology in Reaction Networks}
\label{sec:reaction_networks}

In this section, we briefly introduce reaction networks and discuss some of their important properties. Our exposition follows~\cite{feinberg1979lectures}.

\begin{definition}

\begin{enumerate}[label=(\roman*)]
\item A \defi{reaction network} $G = (V, E)$ can be described as a finite directed graph embedded in Euclidean space $\mathbb{R}^n$, also called an \defi{E-graph}, where $V \subset \mathbb{R}^n$ represents a finite set of \defi{vertices} without isolated vertices, and $E \subseteq V \times V$ represents a finite set of \defi{edges} with no self-loops and at most one edge between a pair of ordered vertices.

\item A directed edge $(\by,\by')\in E$, denoted by $\by \to \by'\in E$, is called a \defi{reaction} in the network. 
For every reaction $\by \to \by'\in E$, $\by$ is called the \defi{source vertex} and $\by'$ is called the \defi{target vertex}, and $\by'-\by$ denotes the corresponding \defi{reaction vector}. Moreover, we let $V_s$ denote the set of source vertices of $G$.
\end{enumerate}
\end{definition}

\begin{definition}

Let $G=(V, E)$ be an E-graph.

\begin{enumerate}[label=(\roman*)]
\item Every connected component of $G$ is called a \defi{linkage class} of $G$. Further, we denote each linkage class by the set of vertices it contains.
    
\item A connected component $V' \subseteq V$ is said to be \defi{strongly connected} if every edge is part of a cycle.  
$G$ is said to be \defi{weakly reversible} if every connected component of $G$ is strongly connected.
    
\item A strongly connected component $V' \subseteq V$ is said to be \defi{terminal}, if for any reaction $\by \to \by' \in E$ with $\by \in V'$ then $\by' \in V'$.
\end{enumerate}
\end{definition}

\begin{definition}[\cite{sontag2001structure}]

Let $G=(V, E)$ be an E-graph.

\begin{enumerate}[label=(\roman*)]
\item The \defi{stoichiometric subspace} of $G$, denoted by $S_{G}$, is given by 
\begin{eqnarray} \notag
S_G = \spn \{ \by'-\by \ \mid \ \by \to \by' \in E \}.
\end{eqnarray}

\item Given $\bx_0 \in \mathbb{R}^n_{>0}$, the \textbf{stoichiometric compatibility class} of $\bx_0$ is given by $(\bx_0 + S_G)\cap\mathbb{R}^n_{>0}$.
Further, if $V \subset \mathbb{Z}_{\geq 0}^n$, then the stoichiometric compatibility class of $\bx_0$ is forward-invariant.
\end{enumerate}
\end{definition}

\begin{definition} \label{def:newton}

Let $G=(V, E)$ be an E-graph. The \textbf{Newton polytope} of $G$, denoted by $\New (G)$, is the convex hull formed by the source vertices $V_s$ of $G$.
\end{definition}

\begin{figure}[!ht]
    \centering
    \begin{subfigure}[t]{0.5\textwidth}
        \centering
        \includegraphics[width = 0.4\textwidth]{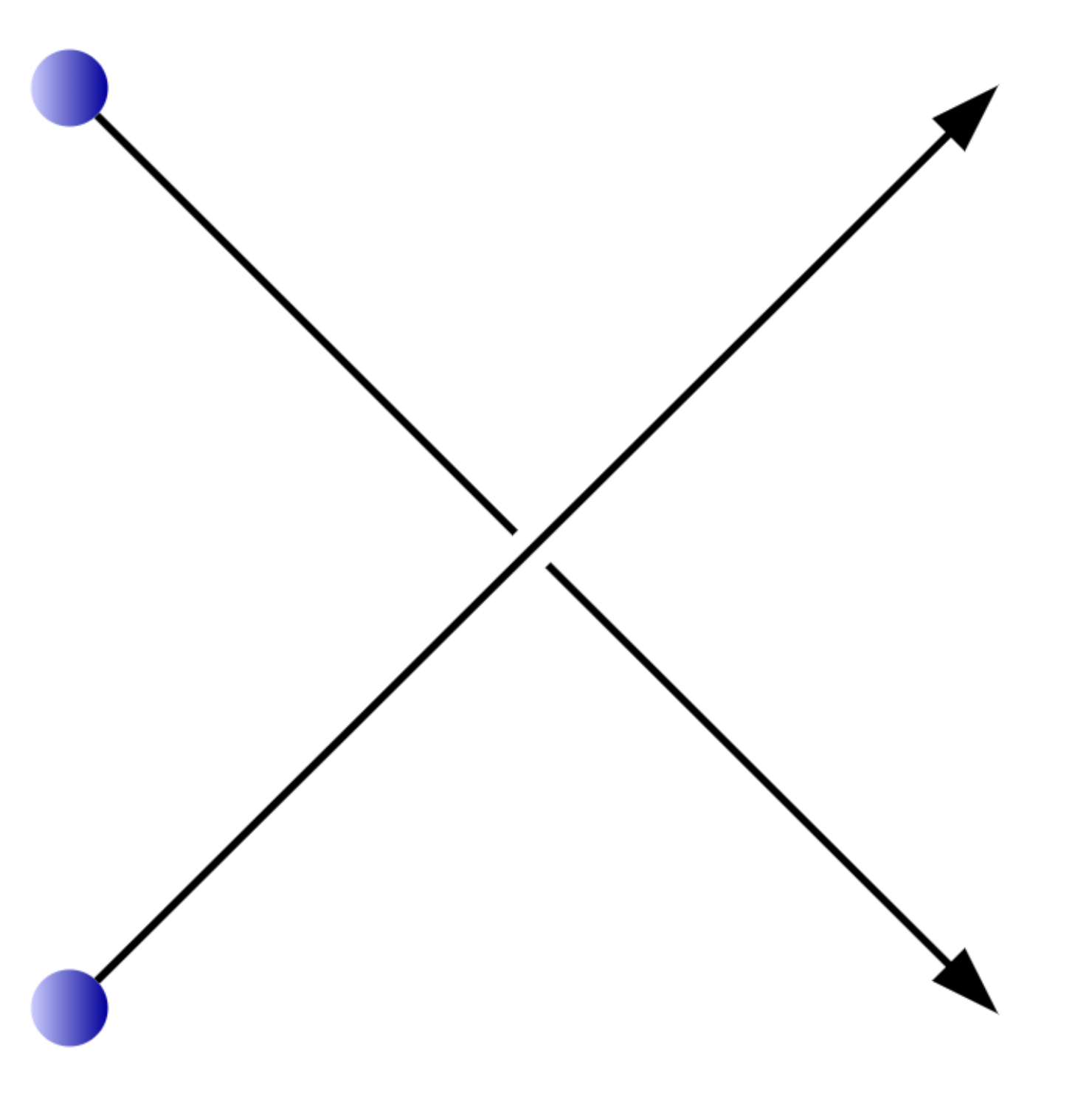}
        \caption{}
    \end{subfigure}%
    \begin{subfigure}[t]{0.5\textwidth}
        \centering
        \includegraphics[width = 0.4\textwidth]{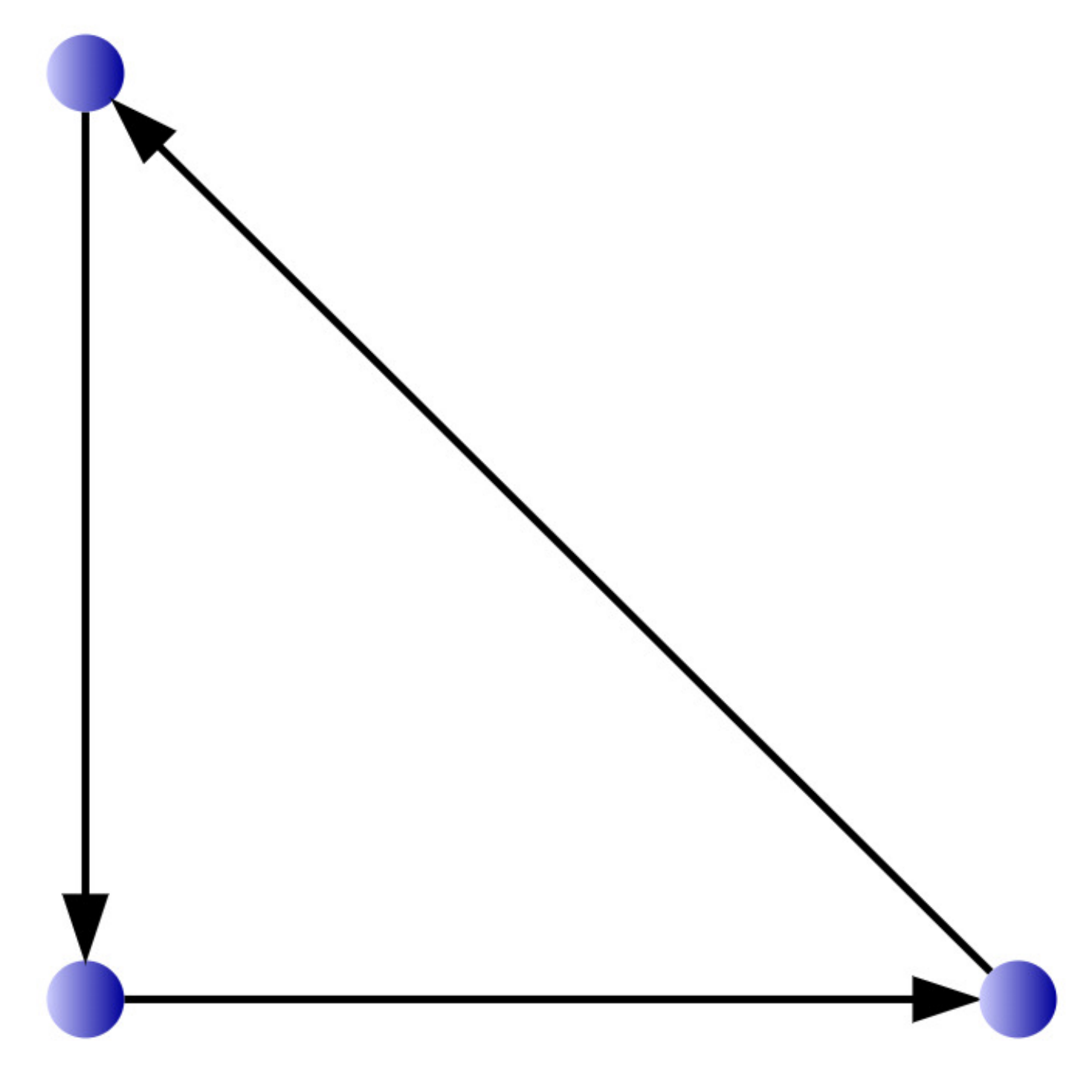}
       \caption{}
    \end{subfigure}%
    \\
    \begin{subfigure}[t]{0.5\textwidth}
        \centering
        \includegraphics[width = 0.4\textwidth]{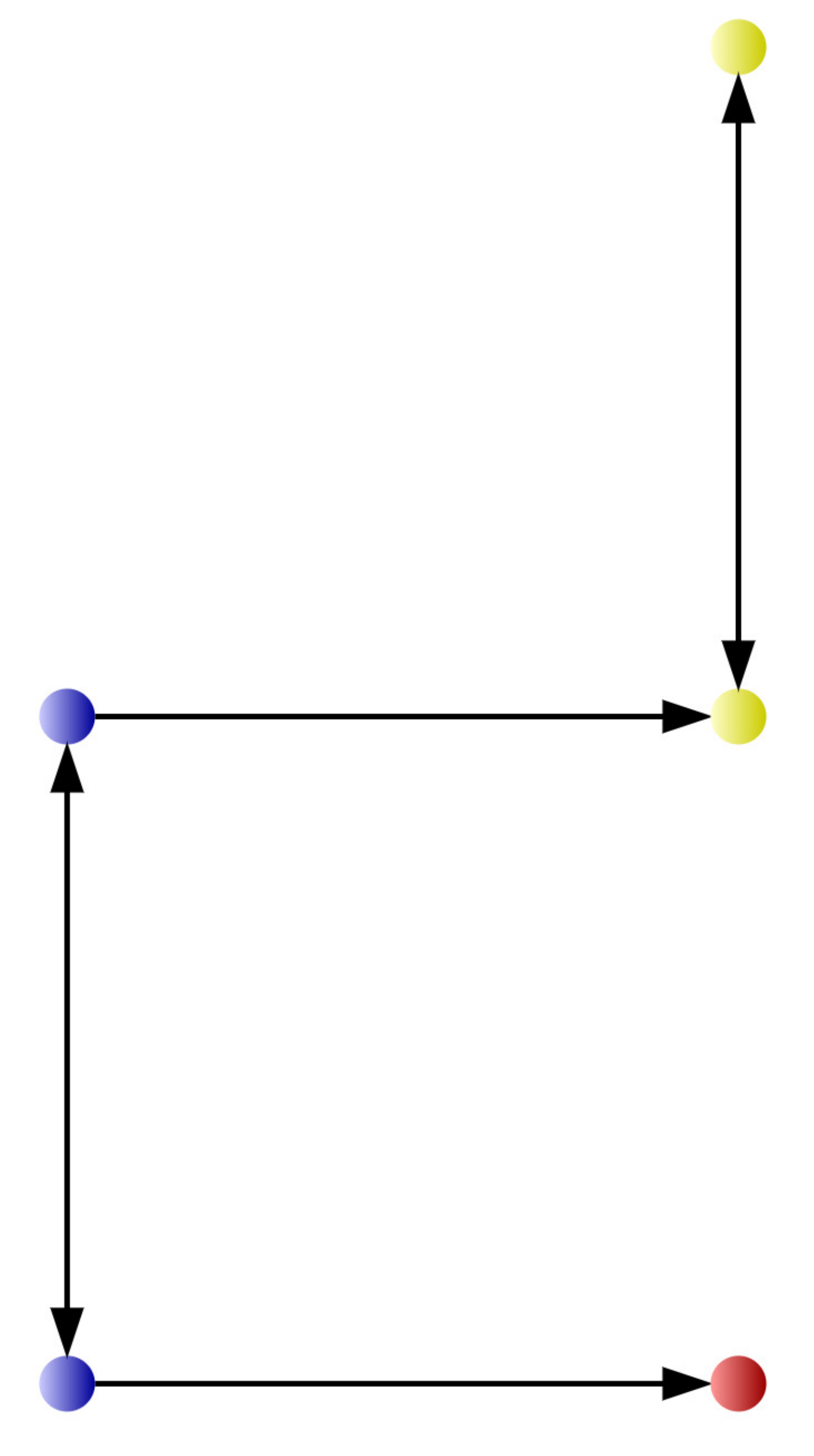}
       \caption{}
    \end{subfigure}%
    \begin{subfigure}[t]{0.5\textwidth}
        \centering
        \includegraphics[width = 0.4\textwidth]{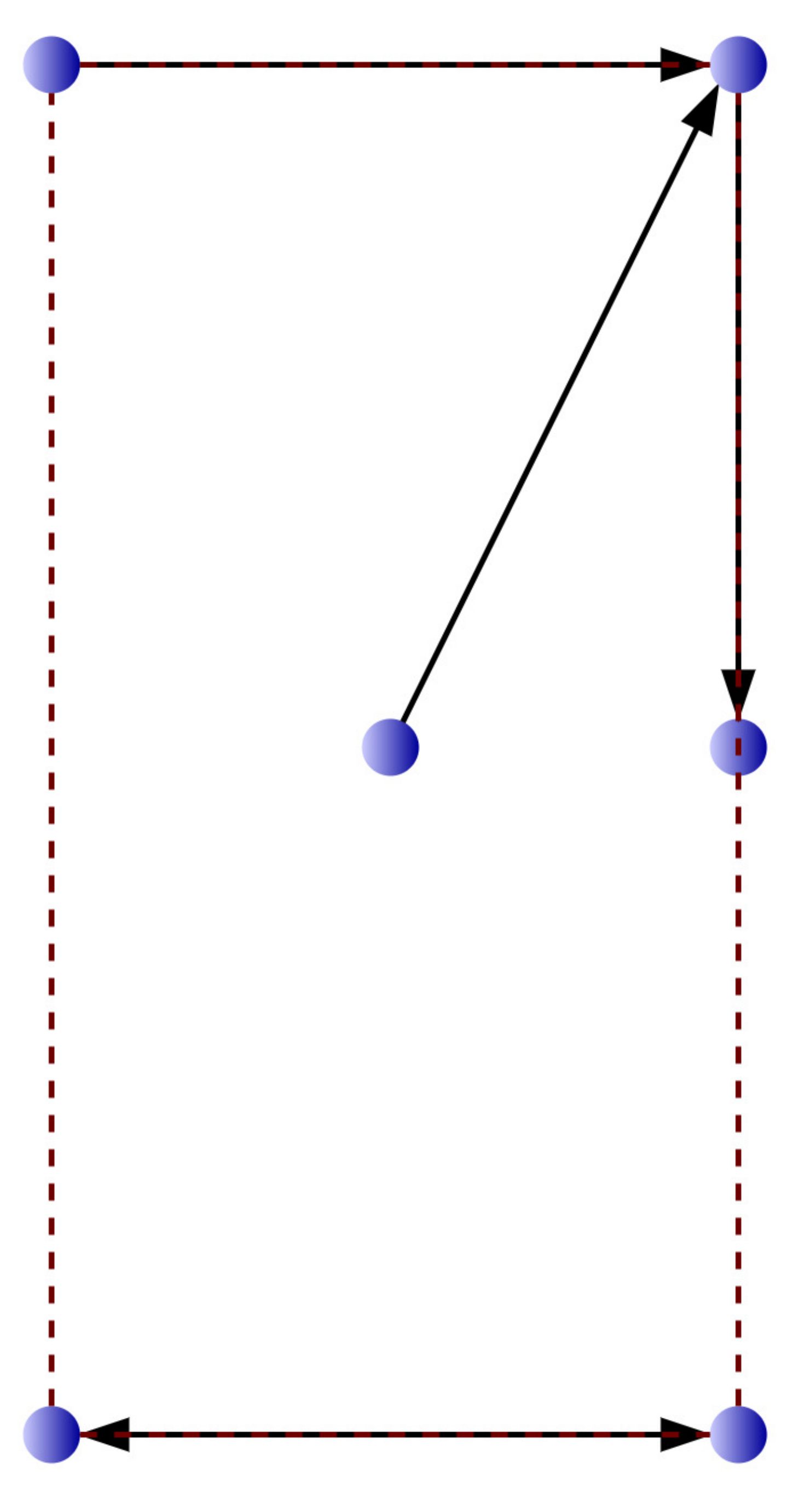}
       \caption{}
    \end{subfigure}%
    \caption{(a) An E-graph . (b) A weakly reversible E-graph. (c) Each color in the network encodes a strong linkage class. The component comprising yellow vertices is a terminal strong linkage class. In addition, the component comprising a single red vertex is also a terminal strong linkage class. The component comprising of blue vertices is a non-terminal strong linkage class (d) The rectangular region   represents the Newton Polytope of the given network (denoted by \New (G)).}
\end{figure}

\begin{definition}

Let $G=(V, E)$ and $G' = (V', E')$ be two E-graphs. 
\begin{enumerate}[label=(\roman*)]
\item $G$ is called a \defi{complete graph} if $\by \to \by'\in E$ for every pair of vertices $\by, \by' \in V$.
Further, a complete graph can be obtained by connecting every pair of vertices in $G = (V, E)$, denoted by $G_c = (V, E_c)$, which is called the \defi{complete graph on $G$}.

\item $G$ is called a \defi{subgraph of $G'$}, denoted by $G\subseteq G'$, if $V \subseteq  V'$ and $E \subseteq E'$. 
Further, we let $G \subseteq_{wr} G'$ denote that $G$ is a weakly reversible subgraph of $G'$.
\end{enumerate}
\end{definition}

\begin{definition}[\cite{craciun2013persistence, gopalkrishnan2014geometric}]
\label{def:endotactic}

Let $G=(V, E)$ be an E-graph. Then
\begin{enumerate}[label=(\roman*)]
\item $G$ is \textbf{endotactic}, if for every reaction $\by \rightarrow \by' \in E$ and $\bv \in \R^n$ satisfying $\bv \cdot (\by'-\by) < 0$, there exists $\tilde{\by} \rightarrow \tilde{\by}' \in E$ such that 
\begin{equation} \label{eq:endotactic}
\bv \cdot \tilde{\by} < \bv \cdot \by 
\ \text{ and } \ 
\bv \cdot (\tilde{\by}'- \tilde{\by}) > 0.
\end{equation}

\item An endotactic E-graph $G$ is said to be \textbf{strongly endotactic}, if for every reaction $\by\rightarrow\by'\in E$ and $\bv\in\R^n$ satisfying $\bv\cdot(\by'-\by)<0$, there exists $\tilde{\by}\rightarrow\tilde{\by}'\in E$ satisfying \eqref{eq:endotactic} and
\[
\bv \cdot \tilde{\by} \leq \bv\cdot\bar{\by}
\ \text{ for every } \
\bar{\by}\in V_{s}.
\]
\end{enumerate}
\end{definition}

\begin{definition}[\cite{adleman2014mathematics,feinberg1979lectures,guldberg1864studies,gunawardena2003chemical,voit2015150,yu2018mathematical}]
\label{def:massAction}

Let $G=(V, E)$ be an E-graph. 
The \defi{reaction rate vector} of $G$ is given by 
\begin{equation} \notag
\bk = (k_{\by \rightarrow \by'})_{\by \rightarrow \by' \in E} \in \mathbb{R}^{|E|}_{>0},
\end{equation}
where $k_{\by \rightarrow \by'} > 0$ is called the \defi{reaction rate constant} of the reaction $\by \rightarrow \by' \in E$. 
Further, the \defi{associated mass-action system generated by $(G, \bk)$} is given by
\begin{eqnarray} \label{eq:mass_action}
\frac{d\bx}{dt} = \displaystyle\sum_{\by\rightarrow \by'} k_{\by\rightarrow\by'}{\bx}^{\by}(\by'-\by).
\end{eqnarray} 
\end{definition}

\begin{definition}[\cite{horn1972general}]
\label{def:cb}

Let $(G,\bk)$ be a mass-action system. Then $(G, \bk)$ is said to be \textbf{complex-balanced}, if there exists $\bx_0 \in \mathbb{R}^n_{>0}$ such that for every vertex $\by_0 \in V$,
\begin{eqnarray} \notag
\displaystyle\sum_{\by_0\rightarrow \by'\in E}{\bk}_{\by_0\rightarrow \by'}{\bx_0}^{\by_0} = \displaystyle\sum_{\by'\rightarrow \by_0\in E}{\bk}_{\by'\rightarrow \by_0}{\bx_0}^{\by'}.
\end{eqnarray}
\end{definition}

\begin{remark}[\cite{craciun2013persistence,gopalkrishnan2014geometric,horn1972general}]
\label{rmk:cb_property}

If $(G, \bk)$ is a complex-balanced system, then the E-graph $G$ is weakly reversible and thus it is endotactic. Further, every weakly reversible network consisting of a single linkage class is strongly endotactic and every strongly endotactic network is endotactic.
\end{remark}

\begin{figure}[!ht]
    \centering
    \begin{subfigure}[t]{0.33\textwidth}
        \centering
        \includegraphics[width = 0.6\textwidth]{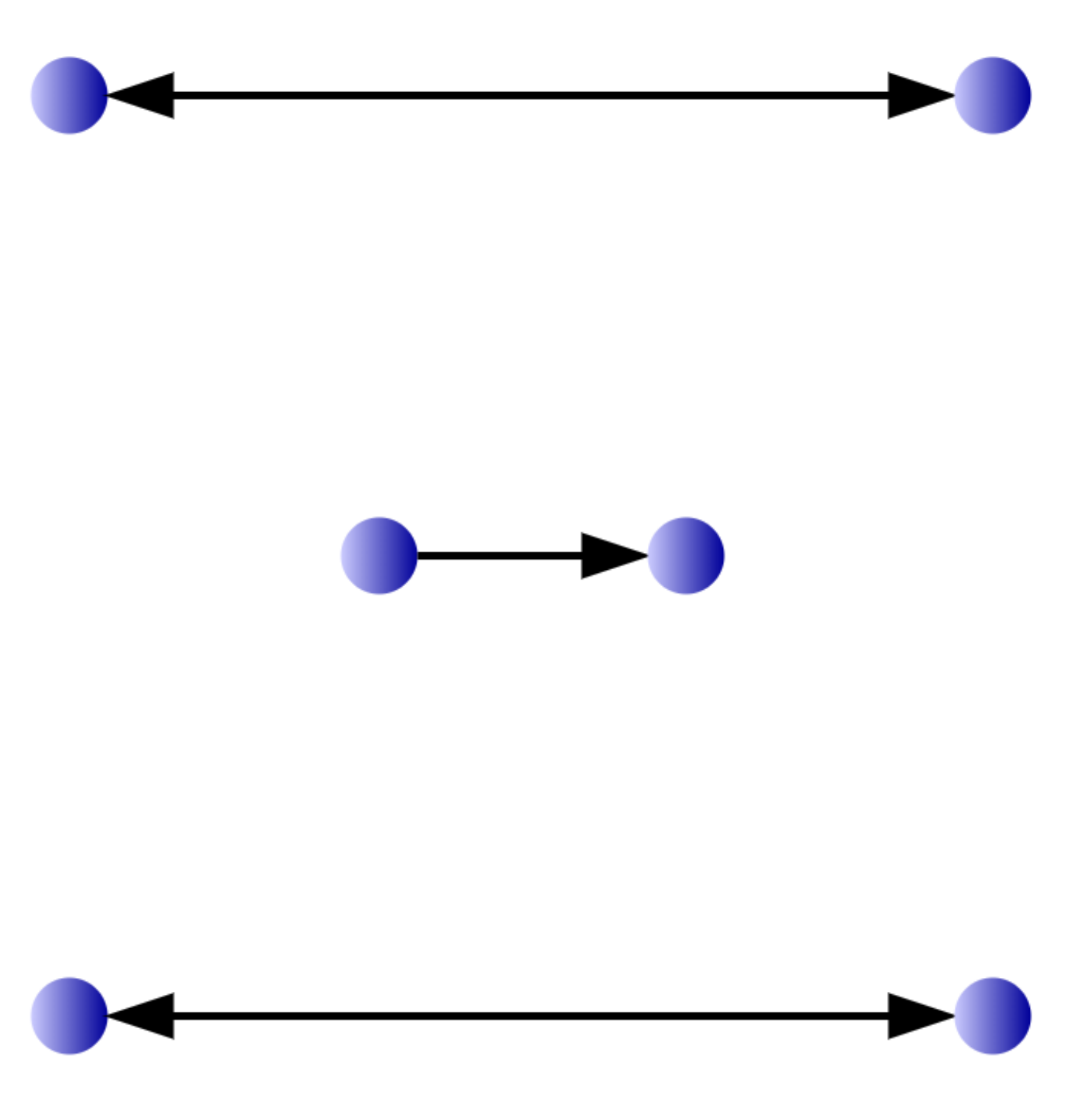}
       \caption{}
    \end{subfigure}%
    \qquad
    \begin{subfigure}[t]{0.33\textwidth}
        \centering
        \includegraphics[width = 0.6\textwidth]{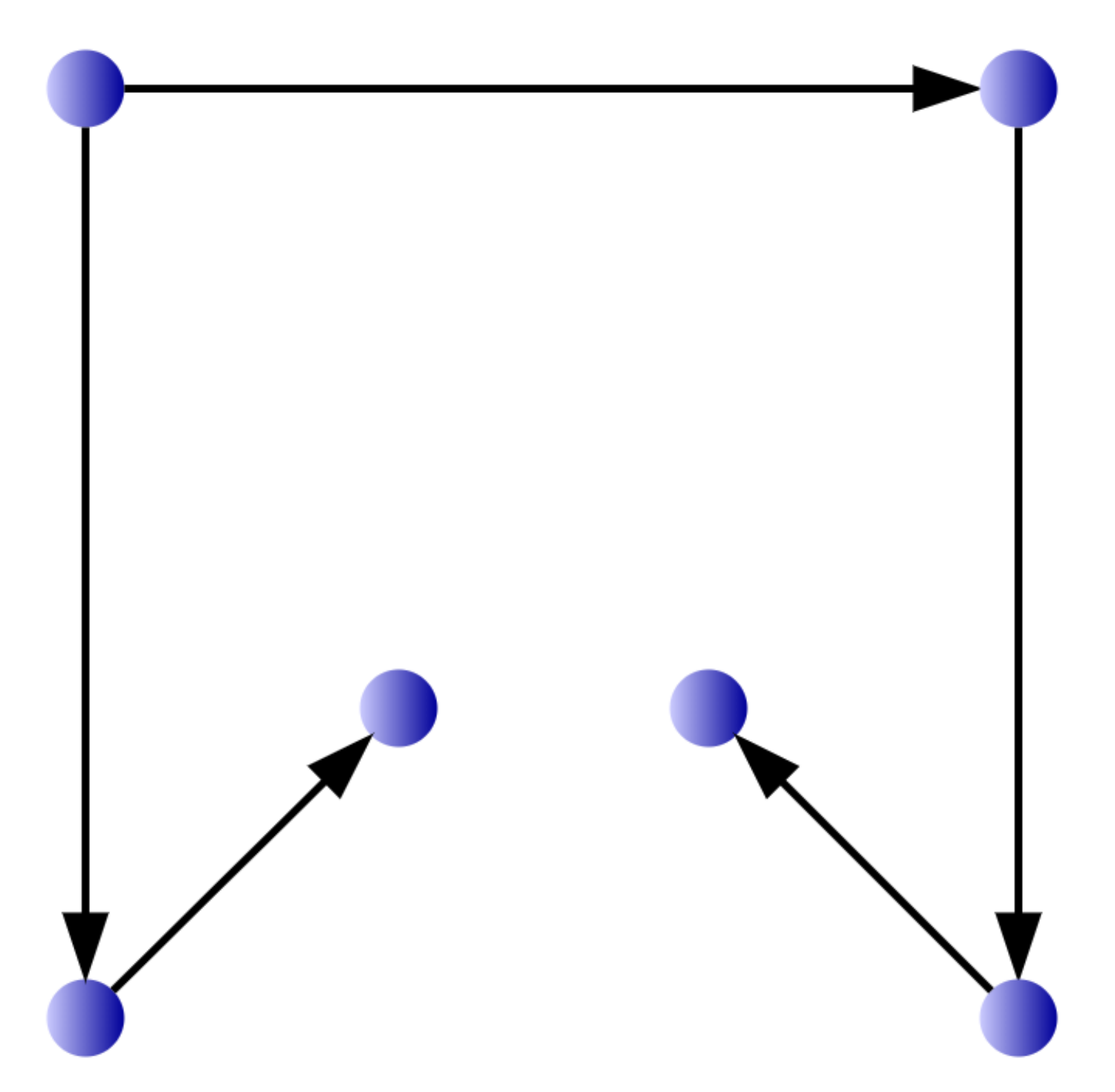}
       \caption{}
    \end{subfigure}%
    \caption{(a) An endotactic E-graph. (b) A strongly endotactic E-graph.}
\end{figure}

\begin{definition}

Let $(G,\bk)$ be a mass-action system. Then $(G,\bk)$ is called \textbf{persistent} if for any initial condition $\bx(0)\in\mathbb{R}^n_{>0}$, the solution $\bx(t)$ satisfies that
\begin{eqnarray} \notag
\lim_{t\to T}\inf \bx_i(t) > 0 
\ \text{ for every } \
i = 1, \ldots, n,
\end{eqnarray} 
where $T$ is the maximum time the solution $\bx(t)$ is well-defined.
\end{definition}

\begin{definition}

Let $(G,\bk)$ be a mass-action system. Then $(G,\bk)$ is called \textbf{permanent}
if for any initial condition $\bx(0)\in\mathbb{R}^n_{>0}$, there exists a compact set $\mathcal{P} \in (\bx_0 + S_G)\cap\mathbb{R}^n_{>0}$ such that $\bx(t)\in \mathcal{P}$ eventually.
\end{definition}

\begin{definition}

Let $(G,\bk)$ be a mass-action system.
A point $\bx^* \in\mathbb{R}^n_{>0}$ is said to be a \textbf{global attractor} 
if for every $\bx(0)\in (\bx^* + S_G)\cap\mathbb{R}^n_{>0}
$, then $\displaystyle\lim_{t\to\infty} \bx(t) = \bx^*$. 
\end{definition}

These properties are related to the following open problems:

\smallskip

\textbf{Persistence Conjecture:} Weakly reversible dynamical systems are persistent. 

\smallskip

\textbf{Global Attractor Conjecture:} Complex-balanced dynamical systems have a globally attracting steady state within each stoichiometric compatibility class.

\smallskip

Some partial cases of these conjectures have been established. Angeli, Sontag, and Leenheer have shown that weakly reversible dynamical systems without critical siphons are persistent~\cite{angeli2007petri}. This connection has been further strengthened by Gopalkrishnan and Deshpande~\cite{deshpande2014autocatalysis}, by giving a combinatorial characterization of critical siphons in terms of autocatalytic sets~\cite{craciun2022autocatalytic}. 

Anderson~\cite{anderson2011proof} has proved the Global Attractor Conjecture for weakly reversible networks consisting of a single linkage class. Craciun, Nazarov, and Pantea~\cite{craciun2013persistence} have proved the persistence conjecture for endotactic networks in two dimensions. This has been extended by Pantea~\cite{pantea2012global} to endotactic networks with two-dimensional stoichiometric subspace. Gopalkrishnan, Miller, and Shiu~\cite{gopalkrishnan2014geometric} have proved the permanence of strongly endotactic networks. Craciun~\cite{craciun2015toric} has proposed a proof of the Global Attractor Conjecture using the idea of toric differential inclusions\cite{craciun2019polynomial,craciun2019quasi,craciun2020endotactic}. More recently, Craciun and Deshpande have shown that endotactic networks can be embedded into toric differential inclusions~\cite{craciun2020endotactic}. Further, an alternate characterization of toric differential inclusions has been obtained in~\cite{craciun2019quasi}.

\section{Dynamical Equivalence}
\label{sec:dyn_equiv}

To account for the possibility that different reaction networks can generate the same dynamics, the notion of \emph{dynamical equivalence} was introduced. 
This concept is also referred to as \emph{macro-equivalence} \cite{horn1972general} and \emph{confoundability}~\cite{craciun2008identifiability}.

\begin{definition} 

Consider a dynamical system of the form 
\begin{equation} \label{eq:xxx}
\frac{d\bx}{dt} = \bof (\bx).
\end{equation}
If a mass-action system $(G, \bk)$ satisfies the system \eqref{eq:xxx}, then $(G, \bk)$ is called a \defi{realization} of the system \eqref{eq:xxx}.
\end{definition}

\begin{definition}
\label{def:dyn_equ}

Let $(G,\bk)$ and $(G', \bk')$ be two mass-action systems. Then $(G,\bk)$ is said to be \textbf{dynamically equivalent} to $(G',\bk')$, denoted by $(G,\bk) \sim (G',\bk')$, if 
for every $\bx\in\mathbb{R}^n_{>0}$
\begin{equation} \notag
\displaystyle\sum_{\by \rightarrow \tilde{\by} \in E} k_{\by \rightarrow \tilde{\by}}{\bx}^{\by}(\tilde{\by} - \by) = \displaystyle\sum_{\by' \rightarrow \tilde{\by}' \in E'} k'_{\by' \rightarrow \tilde{\by}'}{\bx}^{\by'}(\tilde{\by}'- \by').
\end{equation} 
In particular, $(G',\bk')$ is a realization of $(G,\bk)$ if $(G,\bk) \sim (G',\bk')$.
\end{definition}

\begin{figure}[!ht]
    \centering
    \begin{subfigure}[t]{0.5\textwidth}
        \centering
        \includegraphics[width = 0.47\textwidth]{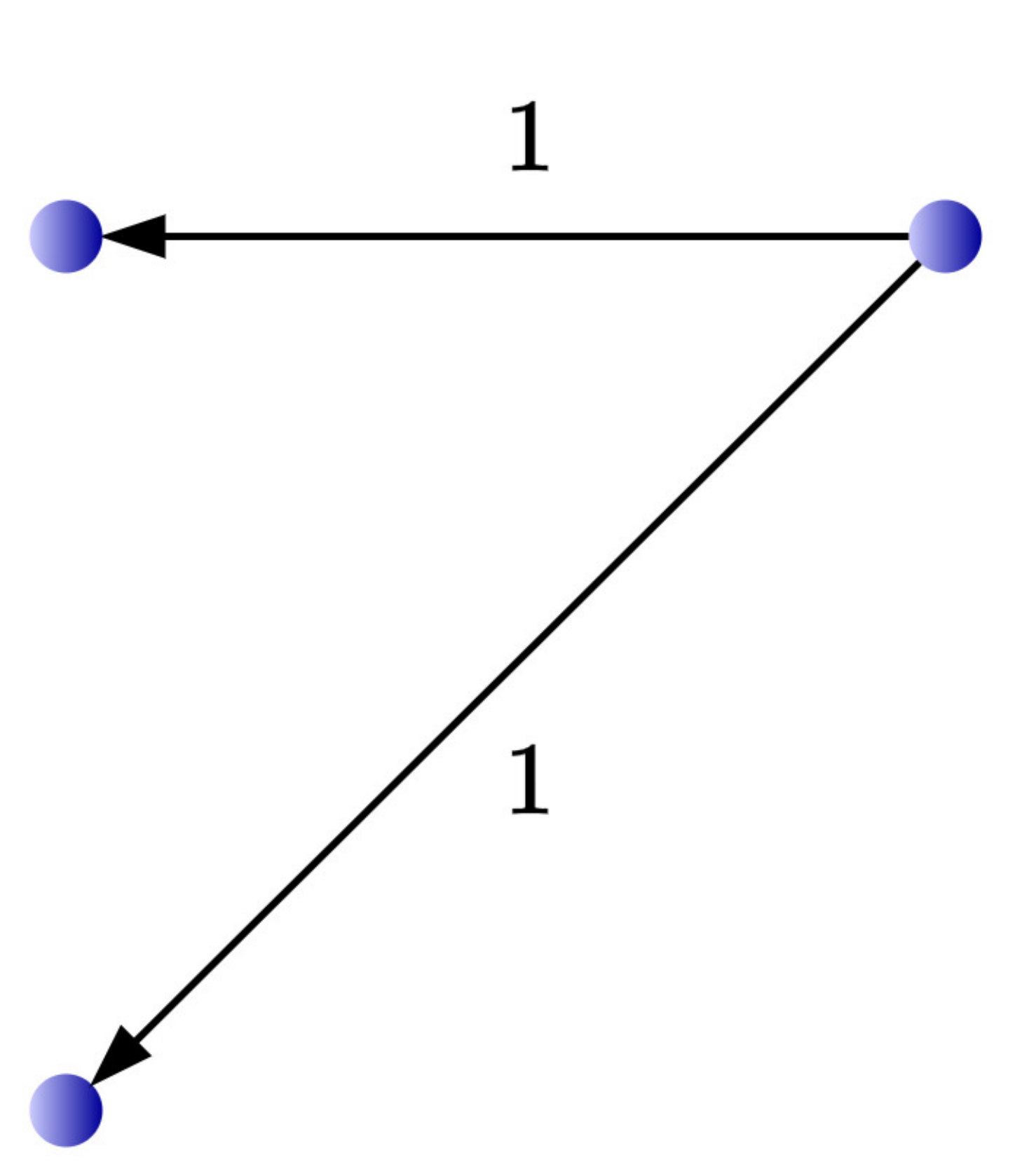}
       \caption{}
        \label{fig:DynamicalEquivalence1}
    \end{subfigure}%
    \begin{subfigure}[t]{0.5\textwidth}
        \centering
        \includegraphics[width = 0.5\textwidth]{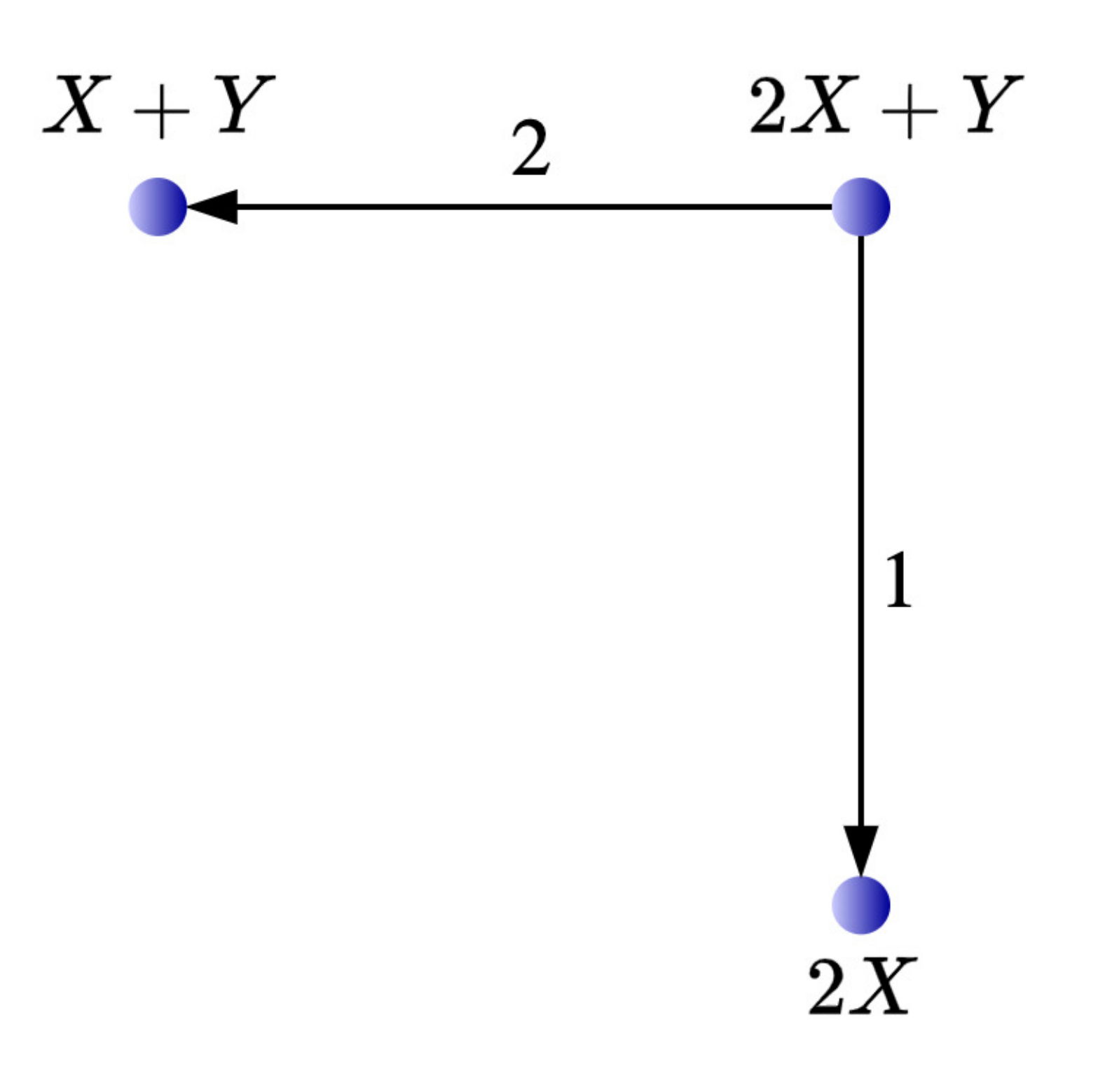}
       \caption{}
        \label{fig:DynamicalEquivalence2}
    \end{subfigure}%
    \caption{The two reaction networks are dynamically equivalent to each other and are governed by the same dynamical equation. We decompose the diagonal reaction vector from (a) into orthogonal components to get (b).}
\end{figure}

\begin{definition}
Let $G$ and $G'$ be two E-graphs. Then the dynamics of $G$ is said to be \textbf{included} within the dynamics of $G'$\footnote{
For simplicity of notation, we sometimes say that $G$ is included in $G'$ when $G\sqsubseteq G'$.
}, denoted by $G \sqsubseteq G'$, if for any $\bk\in\mathbb{R}^{|E|}_{>0}$, there exists $\bk'\in\mathbb{R}^{|E'|}_{>0}$ such that $(G,\bk) \sim (G',\bk')$.
\end{definition}

We are now ready to pose the central question of this paper precisely:

\medskip

\textbf{Question:} Given an E-graph $G=(V,E)$, what are the necessary and sufficient conditions on $G$ such that there exists an E-graph $G'\subseteq_{wr} G_c$ and $G \sqsubseteq G'$?

\medskip

It is important to note that this question can be translated into a nonlinear feasibility problem (see Appendix \ref{sec:p1} for details). 
Moreover, for a given mass-action system $(G, \bk)$, several algorithms have been proposed to check if a weakly reversible realization exists. These include mixed-integer programming methods \cite{johnston2012linear, szederkenyiactual} and linear programming-based procedures \cite{rudan2014polynomial}.
However, solving this feasibility problem can be time-consuming, particularly because it requires identifying a weakly reversible E-graph $G'$ that satisfies all reaction rate vectors $\bk\in\mathbb{R}^{|E|}_{>0}$. Additionally, the number of potential weakly reversible E-graphs grows exponentially with the number of vertices in the graph.

To enhance efficiency, we propose an alternative approach by establishing necessary and sufficient conditions based on the geometric properties of the E-graph $G$. 
In Section \ref{sec:necessary}, we show that being an \emph{endotactic} E-graph is a necessary condition. In Section \ref{sec:sufficient}, we present sufficient conditions, derived from the Newton polytope of the E-graph, that ensure the existence of a weakly reversible realization for all choices of the reaction rate vector.

\section{Necessary Conditions for an E-Graph's Dynamics to be Included in the Dynamics of a Weakly Reversible E-graph}
\label{sec:necessary}

In this section, we establish necessary conditions on the E-graph $G$ such that $(G, \bk)$ always admits a weakly reversible realization, regardless of the choice of the reaction rate vector $\bk$. More precisely, Theorem \ref{thm:necessary_wr} states that the E-graph $G$ must be endotactic.

We start by defining the \emph{net reaction vector} and the \emph{E-graph corresponding to the net reaction vectors of a mass-action system}.

\begin{definition}
Let $(G, \bk)$ be a mass-action system. For every vertex $\by\in V$, the \defi{net reaction vector} associated with $\by$ is defined as follows:
\begin{equation} \notag
\bw_{\by} = \displaystyle\sum_{\by\to\by'\in E} k_{\by\to\by'}(\by'-\by).   
\end{equation}
\end{definition}

\begin{definition}

Let $(G,\bk)$ be a mass-action system. 
The \defi{E-graph corresponding to the net reaction vectors of $(G,\bk)$}, denoted by $G_{\bW(\bk)}$, is defined as follows:
\begin{enumerate}[label=(\roman*)]
\item All source vertices of $G_{\bW(\bk)}$ correspond to the source vertices of $G$.
    
\item For every source vertex $\by$ of $G_{\bW(\bk)}$,  there exists a corresponding target vertex $\hat{\by}$ and an edge $\by \to \hat{\by} \in G_{\bW(\bk)}$ such that
\[
\hat{\by} - \by = \bw_{\by},
\]    
where $\bw_{\by}$ is the net reaction vector associated with $\by$ of $G$. 
\end{enumerate}
\end{definition}

\begin{lemma} 
\label{lem:dyn_wr_fixed}

Let $(G, \bk)$ and $(G', \bk')$ be two mass-action systems. 
Suppose $G_{\bW(\bk)}$ is the E-graph corresponding to the net reaction vectors of $(G, \bk)$.
If $G'$ is weakly reversible and $(G, \bk) \sim (G', \bk')$, then $G_{\bW(\bk)}$ is endotactic.
\end{lemma}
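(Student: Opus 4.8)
The plan is to show that the net‑reaction‑vector graph $G_{\bW(\bk)}$ inherits its ``inward‑pointing'' structure directly from the weak reversibility of $G'$, using dynamical equivalence only to transport the net reaction vectors of $G'$ onto $G_{\bW(\bk)}$. First I would record what dynamical equivalence buys us. Writing $\bof(\bx)$ for the common vector field and $\bw'_{\by'}=\sum_{\by'\to\tilde\by'}k'_{\by'\to\tilde\by'}(\tilde\by'-\by')$ for the net reaction vector of $G'$ at a source $\by'$, the equivalence $(G,\bk)\sim(G',\bk')$ reads $\sum_{\by\in V_s}\bx^{\by}\bw_{\by}=\bof(\bx)=\sum_{\by'\in V_s'}\bx^{\by'}\bw'_{\by'}$ for every $\bx\in\mathbb{R}^n_{>0}$. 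Since the monomials $\bx\mapsto\bx^{\bz}$ attached to distinct points $\bz\in\mathbb{R}^n$ are linearly independent on $\mathbb{R}^n_{>0}$, I match coefficients monomial by monomial: for every $\bz$ the net reaction vector at $\bz$ is the same whether computed in $G_{\bW(\bk)}$ or in $G'$ (with the convention that it is $\mathbf{0}$ at a non‑source). In particular, every point carrying a nonzero net reaction vector is simultaneously a source of $G$ (hence of $G_{\bW(\bk)}$) and of $G'$, and the two net reaction vectors agree there.

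Next I would verify the endotactic condition for $G_{\bW(\bk)}$ straight from the definition. Fix a reaction $\by\to\hat\by$ of $G_{\bW(\bk)}$ (so $\hat\by-\by=\bw_{\by}$) and a vector $\bv\in\mathbb{R}^n$ with $\bv\cdot(\hat\by-\by)=\bv\cdot\bw_{\by}<0$; I must exhibit another reaction of $G_{\bW(\bk)}$ whose source $\tilde\by$ satisfies $\bv\cdot\tilde\by<\bv\cdot\by$ and whose reaction vector $\bw_{\tilde\by}$ satisfies $\bv\cdot\bw_{\tilde\by}>0$. Because $\bv\cdot\bw_{\by}<0$ forces $\bw_{\by}\neq\mathbf{0}$, the previous paragraph places $\by$ in $G'$ with $\bw'_{\by}=\bw_{\by}$. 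Let $\mathcal L$ be the linkage class of $\by$ in $G'$, set $\nu=\min_{u\in\mathcal L}\bv\cdot u$, and let $L=\{u\in\mathcal L:\bv\cdot u=\nu\}$ be its $\bv$‑lowest vertices.

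The core of the argument is to locate, at the bottom level $L$, a vertex whose net reaction vector points strictly in the $\bv$‑direction. Since every edge stays inside its linkage class and $\mathcal L$ is strongly connected (weak reversibility), each $u\in L$ is a source and all its out‑edges satisfy $\bv\cdot(\text{target}-u)\ge0$; hence $\bv\cdot\bw'_u\ge0$ on $L$, which already shows $\by\notin L$ and therefore $\bv\cdot\by>\nu$. I then claim some $u_0\in L$ has $\bv\cdot\bw'_{u_0}>0$. Indeed, if $\bv\cdot\bw'_u=0$ for every $u\in L$, then positivity of the rate constants forces every out‑edge from $L$ to be $\bv$‑horizontal and thus to land again in $L$, so $L$ is closed under out‑edges; strong connectivity of $\mathcal L$ upgrades this to $L=\mathcal L$, contradicting $\by\in\mathcal L\setminus L$. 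Taking such a $u_0$, we have $\bv\cdot u_0=\nu<\bv\cdot\by$ and $\bv\cdot\bw_{u_0}=\bv\cdot\bw'_{u_0}>0$ (the point $u_0$ is a source of $G_{\bW(\bk)}$ precisely because its net reaction vector is nonzero), so the unique reaction out of $u_0$ is the desired witness and $G_{\bW(\bk)}$ is endotactic.

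The only genuinely nontrivial step is the existence of the strictly outward vertex $u_0$ at the lowest level, and this is exactly where weak reversibility is indispensable: strong connectivity is what prevents the lowest level $L$ of a linkage class from being a ``horizontal trap''. The remaining ingredients — linear independence of monomials to transport the net reaction vectors, and the observation that edges never leave a linkage class — are routine, so I expect the write‑up to be short once the bottom‑level argument is in place.
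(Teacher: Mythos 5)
Your proof is correct, and although it rests on the same geometric insight as the paper's proof --- examine the $\bv$-minimal vertices of $G'$ and use weak reversibility to produce a strictly upward-pointing net reaction vector there --- the logical organization is genuinely different, and in one respect tighter. The paper argues by contradiction: assuming $G_{\bW(\bk)}$ is not endotactic, it forms the set $V_{\mathrm{min}}$ of $\bv$-minimal source vertices over \emph{all} of $G'$, shows (by a nested contradiction) that some vertex of $V_{\mathrm{min}}$ must carry a strictly downward edge, and then concludes that $G'$ itself would fail to be endotactic, contradicting the cited fact that weakly reversible E-graphs are endotactic (Remark~\ref{rmk:cb_property}). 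You instead give a direct, self-contained construction of the endotactic witness: restricting to the linkage class $\mathcal{L}$ of $\by$ in $G'$, your ``horizontal trap'' argument (if every bottom-level net vector were $\bv$-orthogonal, the bottom level $L$ would be closed under out-edges, contradicting strong connectivity since $\by$ lies strictly above $L$) produces $u_0\in L$ with $\bv\cdot\bw'_{u_0}>0$, which you then transport back to $G_{\bW(\bk)}$ via monomial independence. Your route buys two things. First, you never need to invoke ``weakly reversible $\Rightarrow$ endotactic'': your out-closure argument proves exactly the consequence of that fact which is needed. Second, working inside the linkage class of $\by$ makes the weak-reversibility step airtight, whereas the paper's global $V_{\mathrm{min}}$ could in principle lie in a linkage class different from that of $\by_0$ (for instance one whose vertices all sit at a single $\bv$-level, with only horizontal edges), in which case weak reversibility alone does not force an upward net vector at $V_{\mathrm{min}}$ and the paper's intermediate claim does not follow as literally stated. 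You also make explicit the linear-independence-of-monomials step that the paper compresses into the unproved identity $G_{\bW(\bk)} = G'_{\bW(\bk')}$.
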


\begin{proof}

We prove this lemma by contradiction. Assume that $G_{\bW(\bk)}$ is not endotactic. 
From Definition \ref{def:endotactic}, this implies that there exists a reaction $\by_0 \to \hat{\by} \in E_{\bW(\bk)}$ and $\bv \in \mathbb{R}^n$ such that 
\begin{equation} \label{eq:v*y'-y<0}
\bv \cdot (\hat{\by} - \by_0) < 0.
\end{equation}
Moreover, for every $\by \to \tilde{\by} \in E_{\bW(\bk)}$ satisfying $\bv \cdot (\tilde{\by} - \by) > 0$,
\begin{equation} \label{eq:v*ty>=v*y}
\bv \cdot \by \geq \bv \cdot \by_0.
\end{equation}

Let $v_{0} = \min\limits_{\by \in V'_{s}} \{ \bv \cdot \by \}$, consider a set of source vertices in $G'$ as follows:
\begin{equation} \label{eq:v_min}
V_{\rm{min}} = \big\{ \by \in V'_{s} \ \big| \ \bv \cdot \by = v_{0} \big\}.
\end{equation}
From $G_{\bW(\bk)} \sim (G', \bk')$, then $G_{\bW(\bk)} = G'_{\bW(\bk')}$ and thus $\by_0 \to \hat{\by} \in E'_{\bW(\bk')}$.
Since $G'$ is weakly reversible, this implies that $v_0 < \bv \cdot \by_0$ and $V_{min} \neq \emptyset$.
We now claim that there exists one vertex $\by_1 \in V_{min}$ and $\by_1 \to \by'_1 \in E'$, such that
\[
\bv \cdot (\by'_1 - \by_1) < 0.
\]
Suppose not, from \eqref{eq:v*y'-y<0} and $G'$ being weakly reversible, there exists a vertex $\tilde{\by} \in V_{min}$ and
\[
\bv \cdot (\tilde{\by}'- \tilde{\by}) > 0
\ \text{ with } \
\tilde{\by} \to \tilde{\by}' \in E'_{\bW(\bk')} = E_{\bW(\bk)}.
\]
This contradicts with $\by_0 \to \hat{\by} \in E_{\bW(\bk)}$ and \eqref{eq:v*ty>=v*y} in $G_{\bW(\bk)}$, and thus we prove the claim.
Note that this claim implies that $G'$ is not endotactic. However, every weakly reversible E-graph is endotactic, which gives a contradiction. Therefore, $G_{\bW(\bk)}$ is endotactic.
\end{proof}

\begin{lemma}
\label{lem:dyn_wr_all}

Let $G = (V, E)$ be an E-graph. For every $\bk \in \mathbb{R}^{|E|}_{>0}$, there exists a weakly reversible E-graph $G'$ and $\bk' \in \mathbb{R}^{|E'|}_{>0}$ such that $(G, \bk) \sim (G', \bk')$. Then $G$ is endotactic.
\end{lemma}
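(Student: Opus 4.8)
The plan is to argue by contradiction, reducing the endotacticity of $G$ to the endotacticity of the net-reaction E-graphs $G_{\bW(\bk)}$, which Lemma~\ref{lem:dyn_wr_fixed} already controls. Suppose $G$ is not endotactic. Unwinding Definition~\ref{def:endotactic}, there exist a reaction $\by_0 \to \by_0' \in E$ and a vector $\bv \in \mathbb{R}^n$ with $\bv \cdot (\by_0' - \by_0) < 0$ such that every reaction $\tilde{\by} \to \tilde{\by}' \in E$ with $\bv \cdot (\tilde{\by}' - \tilde{\by}) > 0$ satisfies $\bv \cdot \tilde{\by} \geq \bv \cdot \by_0$. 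My goal is to exhibit a single rate vector $\bk$ for which $G_{\bW(\bk)}$ fails to be endotactic, which contradicts the hypothesis once Lemma~\ref{lem:dyn_wr_fixed} is applied to a weakly reversible realization of $(G,\bk)$.

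First I would record the contrapositive form of non-endotacticity: whenever a source vertex $\by$ has $\bv \cdot \by < \bv \cdot \by_0$, every reaction $\by \to \by'$ out of $\by$ must satisfy $\bv \cdot (\by' - \by) \leq 0$, and therefore the net reaction vector $\bw_{\by} = \sum_{\by \to \by'} k_{\by \to \by'}(\by' - \by)$ obeys $\bv \cdot \bw_{\by} = \sum_{\by \to \by'} k_{\by \to \by'}\,\bv \cdot (\by' - \by) \leq 0$ for \emph{every} choice of positive rate constants. Consequently, in $G_{\bW(\bk)}$ the inequality corresponding to \eqref{eq:v*ty>=v*y} holds automatically and for all $\bk$ simultaneously: any edge $\by \to \hat{\by}$ of $G_{\bW(\bk)}$ with $\bv \cdot (\hat{\by} - \by) > 0$ must emanate from a source $\by$ with $\bv \cdot \by \geq \bv \cdot \by_0$. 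No special choice of $\bk$ is needed for this.

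Next I would choose $\bk$ to realize the analogue of \eqref{eq:v*y'-y<0} at $\by_0$. Setting $k_{\by_0 \to \by_0'} = 1$ and all other rate constants out of $\by_0$ equal to a small $\epsilon > 0$ (with the remaining rate constants of $G$ arbitrary positive), one has $\bv \cdot \bw_{\by_0} = \bv \cdot (\by_0' - \by_0) + \epsilon\big(\sum_{\by_0 \to \by',\, \by' \neq \by_0'} \bv \cdot (\by' - \by_0)\big) \to \bv \cdot (\by_0' - \by_0) < 0$ as $\epsilon \to 0$. Hence for sufficiently small $\epsilon$ we obtain $\bv \cdot \bw_{\by_0} < 0$; in particular $\bw_{\by_0} \neq 0$, so the edge $\by_0 \to \hat{\by_0}$ genuinely appears in $G_{\bW(\bk)}$ and satisfies $\bv \cdot (\hat{\by_0} - \by_0) < 0$. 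Combining this with the previous paragraph, $G_{\bW(\bk)}$ satisfies both defining inequalities of non-endotacticity, so it is not endotactic. But by hypothesis $(G, \bk)$ admits a weakly reversible realization $(G', \bk')$, whence Lemma~\ref{lem:dyn_wr_fixed} forces $G_{\bW(\bk)}$ to be endotactic — a contradiction. Therefore $G$ is endotactic.

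I expect the main obstacle to lie in the first step, namely verifying that the ``upward'' condition \eqref{eq:v*ty>=v*y} is inherited by $G_{\bW(\bk)}$ uniformly in $\bk$. The concern is that collapsing the several reactions out of a vertex into the single net vector $\bw_{\by}$ might, a priori, convert a family of non-positive $\bv$-projections into a positive one; ruling this out rests precisely on the linearity $\bv \cdot \bw_{\by} = \sum_{\by \to \by'} k_{\by \to \by'}\,\bv \cdot (\by' - \by)$ together with the contrapositive reformulation of non-endotacticity, which guarantees that every summand is non-positive for the vertices $\by$ with $\bv \cdot \by < \bv \cdot \by_0$. A secondary detail requiring care is confirming $\bw_{\by_0} \neq 0$, so that the edge used to violate endotacticity is actually present in $G_{\bW(\bk)}$ rather than degenerating to a self-loop.
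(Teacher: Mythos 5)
Your proposal is correct and follows essentially the same route as the paper: argue by contradiction, show that non-endotacticity of $G$ forces $G_{\bW(\bk)}$ to be non-endotactic for a suitably chosen $\bk$, and then contradict Lemma~\ref{lem:dyn_wr_fixed} applied to the weakly reversible realization. The only cosmetic difference is that you shrink the other rate constants out of $\by_0$ to a small $\epsilon$ while the paper enlarges $k_{\by_0 \to \by_0'}$ — these are equivalent up to scaling — and your explicit checks that the upward condition holds uniformly in $\bk$ and that $\bw_{\by_0} \neq 0$ make the same argument slightly more careful than the original.
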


\begin{proof}

We prove this lemma by contradiction. Assume that $G$ is not endotactic. 
From Definition \ref{def:endotactic}, this shows that there exists a reaction $\by \to \by' \in E$ and $\bv \in \mathbb{R}^n$ such that 
\begin{equation} \label{eq:v*y'-y<0_2}
\bv \cdot (\by'-\by) < 0.
\end{equation}
Further, for every $\tilde{\by} \to \tilde{\by}' \in E$, it  satisfies 
\begin{equation} \label{eq:v*ty>=v*y_2}
\bv \cdot (\tilde{\by}'- \tilde{\by}) \leq 0
\ \text{ or } \
\bv \cdot \tilde{\by} \geq \bv \cdot \by.
\end{equation}

Consider $G_{\bW(\bk)}$ as the E-graph corresponding to the net reaction vectors of $(G,\bk)$. 
From \eqref{eq:v*y'-y<0_2}, there exists sufficiently large constant $k_{\by \to \by'}$ such that
\[
\bv \cdot (\hat{\by} - \by) < 0
\ \text{ with } \ 
\by \to \hat{\by} \in E_{\bW(\bk)}.
\]
Moreover, \eqref{eq:v*ty>=v*y_2} implies that for any vertex $\tilde{\by} \in G_{\bW(\bk)}$ satisfying $\bv \cdot \tilde{\by} < \bv \cdot \by$, we have
\[
\bv \cdot (\tilde{\by}'- \tilde{\by}) \leq 0
\ \text{ with } \
\tilde{\by} \to \tilde{\by}' \in E_{\bW(\bk)}.
\]
This indicates $G_{\bW(\bk)}$ is not endotactic under such $\bk \in \mathbb{R}^{|E|}_{>0}$.

On the other hand, from the assumption there exists a weakly reversible E-graph $G'$ and $\bk' \in \mathbb{R}^{|E'|}_{>0}$ such that $(G, \bk) \sim (G', \bk')$ for every $\bk \in \mathbb{R}^{|E|}_{>0}$. Using Lemma \ref{lem:dyn_wr_fixed}, we derive $G_{\bW(\bk)}$ is endotactic which gives a contradiction. Therefore, $G$ is endotactic.
\end{proof}

As a consequence of Lemma \ref{lem:dyn_wr_all}, we get the main result of this section stated below.

\begin{theorem}
\label{thm:necessary_wr}

Let $G = (V, E)$ and $G' = (V', E')$ be two E-graphs. If $G'$ is weakly reversible and $G \sqsubseteq G'$, then $G$ is endotactic. 
Therefore, $G$ being endotactic is a necessary condition for its dynamics to be included in the dynamics of a weakly reversible E-graph.
\end{theorem}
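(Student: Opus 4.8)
The plan is to observe that Theorem~\ref{thm:necessary_wr} is essentially a direct corollary of Lemma~\ref{lem:dyn_wr_all}, so the proof amounts to unwinding the definition of dynamical inclusion and invoking the lemma. First I would recall that the hypothesis $G \sqsubseteq G'$ means, by definition, that for every reaction rate vector $\bk \in \mathbb{R}^{|E|}_{>0}$ there exists $\bk' \in \mathbb{R}^{|E'|}_{>0}$ with $(G, \bk) \sim (G', \bk')$. Since we are additionally given that $G'$ is weakly reversible, this same fixed $G'$ witnesses the existence statement required in the hypothesis of Lemma~\ref{lem:dyn_wr_all}.

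Next I would verify that the hypotheses match. Lemma~\ref{lem:dyn_wr_all} asks that for every $\bk$ there exist \emph{some} weakly reversible E-graph and rate vector realizing $(G,\bk)$; the theorem hands us the stronger statement that a single fixed weakly reversible $G'$ works simultaneously for all $\bk$. Thus the theorem's hypothesis is a special (indeed stronger) case of the lemma's hypothesis, and the conclusion that $G$ is endotactic transfers immediately.

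The only point requiring a moment of care is the quantifier structure: I would make explicit that the lemma permits the weakly reversible target graph to vary with $\bk$, whereas the inclusion $G \sqsubseteq G'$ keeps it fixed. Since a fixed choice is a valid instance of an existentially quantified choice, no additional work is needed. I do not anticipate any genuine obstacle, as the substance of the argument has already been carried out in Lemmas~\ref{lem:dyn_wr_fixed} and~\ref{lem:dyn_wr_all}; the theorem simply repackages that content in the language of dynamical inclusion. The final sentence of the statement then follows by contraposition: if $G$ fails to be endotactic, then no weakly reversible $G'$ can include its dynamics.
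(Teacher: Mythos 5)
Your proposal is correct and follows exactly the paper's own argument: unwind the definition of $G \sqsubseteq G'$ and apply Lemma~\ref{lem:dyn_wr_all}, noting that the fixed weakly reversible $G'$ is a valid instance of the lemma's existentially quantified hypothesis. Your explicit remark about the quantifier structure is a sound (if slightly more verbose) elaboration of the same two-line proof given in the paper.
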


\begin{proof}

Since $G\sqsubseteq G'$, for any $\bk \in \mathbb{R}^{|E|}_{>0}$ there exists $\bk' \in \mathbb{R}^{|E'|}_{>0}$, such that $(G,\bk) \sim (G',\bk')$.
Using Lemma \ref{lem:dyn_wr_all}, we conclude that $G$ is endotactic.
\end{proof}

\section{Sufficient Conditions for an E-Graph's Dynamics to be Included in the Dynamics of a Weakly Reversible E-graph}
\label{sec:sufficient}

This section aims to establish sufficient conditions on the E-graph $G$ to ensure that $G$ can be included in a weakly reversible E-graph.
According to Theorem~\ref{thm:necessary_wr}, $G$ must be an endotactic network to start with.
We divide our analysis into two cases: (i) Strongly endotactic 2D networks with two-dimensional stoichiometric subspaces, and (ii) Endotactic networks in arbitrary dimensions.

We introduce the following notation, which will be used throughout this section.

Consider an E-graph $G = (V, E)$ and a vertex $\by \in V$. Then
\begin{enumerate}[label=(\roman*)]
\item $Cone_{G}(\by) := \{\rm{Cone}(\by'-\by) \,|\, \by\to\by'\in E \} = \left\{\displaystyle\sum_{\by\to\by'\in E} \lambda_{\by\to\by'} (\by'-\by) \mid\, \lambda_{\by\to\by'} >0\right\}$.

\item $RelInt[Cone_{G}(\by)]$: the relative interior of $Cone_{G}(\by)$.

\item $\partial Cone_{G}(\by)$: the boundary of $Cone_{G}(\by)$.
\end{enumerate}

\subsection{Strongly Endotactic 2D Networks with Two-Dimensional Stoichiometric Subspaces}
\label{sec:strongly_endo_two}

In this section, we prove the dynamics of a strongly endotactic 2D E-graph with a two-dimensional stoichiometric subspace, along with certain additional structural properties, can be realized by the dynamics of a weakly reversible network consisting of a single linkage class. 

\begin{theorem} 
\label{thm:our_theorem}

Let $G =(V, E)$ be a strongly endotactic 2D E-graph with a two-dimensional stoichiometric subspace. 
Assume that all source vertices lie on the boundary of $\New (G)$.  
Then there exists a weakly reversible E-graph $G'$ such that $G'$ has a single linkage class and $G \sqsubseteq G'$
\end{theorem}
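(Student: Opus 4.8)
The plan is to reduce dynamical equivalence to a local condition on net reaction vectors, show that the strongly endotactic hypothesis forces these vectors to point into the Newton polytope, and then build a single fixed weakly reversible graph whose outgoing cones at each source match those of $G$ so that every admissible family of net vectors is realizable with strictly positive rates. First I would record that, since the monomials $\{\bx^{\by}\}_{\by\in V_s}$ are linearly independent, $(G,\bk)\sim(G',\bk')$ holds precisely when $G$ and $G'$ share the same source set and the same net reaction vector $\bw_{\by}$ at every source $\by$ (a vertex absent from a network contributing the zero vector). Hence $G\sqsubseteq G'$ amounts to exhibiting a fixed $G'$ such that, for every $\bk$, one can choose positive rates $\bk'$ whose net vector equals $\bw_{\by}$ at each source $\by$ of $G$ and equals $0$ at any auxiliary vertex of $G'$. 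These constraints decouple across vertices, and as $\bk$ ranges over $\R^{|E|}_{>0}$ the realizable net vector at $\by$ ranges over exactly $Cone_G(\by)$; so the task reduces to choosing $G'$ with (i) $Cone_{G'}(\by)\supseteq Cone_G(\by)$ of the same dimension at each source $\by$, which forces $\bw_{\by}\in RelInt[Cone_{G'}(\by)]$ and hence strictly positive rates, and (ii) $G'$ weakly reversible with a single linkage class.

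Second, I would prove the key geometric lemma: for every source $\by$ the net reaction vector $\bw_{\by}$ lies in the tangent cone of $\New(G)$ at $\by$, i.e. it points inward. The argument parallels the proof of Lemma~\ref{lem:dyn_wr_all}. Fix a direction $\bv$ for which $\by$ minimizes $\bv\cdot\bar{\by}$ over $V_s$; such $\bv$ exist precisely because $\by\in\partial\New(G)$, and they sweep out the inner normal cone at $\by$. If $\bw_{\by}\cdot\bv<0$, then some outgoing reaction $\by\to\by'$ satisfies $\bv\cdot(\by'-\by)<0$, and Definition~\ref{def:endotactic} produces $\tilde{\by}\to\tilde{\by}'$ with $\bv\cdot\tilde{\by}<\bv\cdot\by$, contradicting the $\bv$-minimality of $\by$ among source vertices. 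Thus $\bw_{\by}\cdot\bv\ge 0$ for every inner normal direction $\bv$, which is exactly the statement that $\bw_{\by}$, and therefore all of $Cone_G(\by)$, lies in the tangent cone to $\New(G)$ at $\by$.

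Third, I would construct $G'$. List the source vertices in cyclic order $\by_1,\dots,\by_m$ along $\partial\New(G)$ and fix an interior point $\bp_0$ of $\New(G)$. Assemble $G'$ from: the outgoing edges of $G$ at each source, which preserve the dimension and extreme rays of $Cone_G(\by_i)$ and so keep the degenerate, edge-parallel net vectors realizable with positive rates; additional inward edges toward $\by_{i\pm1}$ and toward $\bp_0$ at those sources whose outgoing cone is two-dimensional; and return edges out of $\bp_0$ and out of the non-source target vertices, chosen so their net vectors can be balanced to $0$ while binding every vertex into one strongly connected component. The strongly endotactic hypothesis is what supplies a globally consistent orientation: at each $\bv$-minimal source there is an outgoing reaction that increases $\bv$, and this lets the boundary edges and the spokes through $\bp_0$ be oriented into a single circulation, producing one linkage class rather than several. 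Solving the decoupled per-vertex cone-membership systems then yields the positive rate vector $\bk'$, establishing $G\sqsubseteq G'$.

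The main obstacle is the third step: making $G'$ weakly reversible with a single linkage class while keeping the realizing rates strictly positive for all $\bk$. The tension is that outgoing edges at boundary sources may only point inward, so a directed cycle leaving the boundary must re-enter it through interior or target vertices; moreover any edge added at a source with a one-dimensional outgoing cone would raise that cone's dimension and push the edge-parallel net vectors onto its boundary, destroying positivity. Reconciling these demands—routing enough return paths through $\bp_0$ and the target vertices to obtain a single strongly connected component without enlarging the critical source cones—is the crux, and it is exactly here that the two-dimensionality, the placement of all sources on $\partial\New(G)$, and the strongly endotactic structure are used.
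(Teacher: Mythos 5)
Your Steps 1 and 2 are sound: the reduction of $G \sqsubseteq G'$ to per-source realizability of net reaction vectors is correct, and your tangent-cone lemma (every reaction vector at a source on the boundary of $\New(G)$ lies in the tangent cone of $\New(G)$ at that source, proved by testing inner normal directions against endotacticity) is exactly the geometric fact the paper uses, stated more explicitly than the paper states it. The gap is Step 3, which is the actual content of the theorem and which you yourself leave unresolved by calling it ``the crux'': you never specify the edge set of $G'$ precisely, never verify the cone inclusions for it, and never prove weak reversibility or the single linkage class.

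Moreover, the construction you sketch cannot be completed as stated, because you keep the original edges of $G$ inside $G'$. That forces every non-source target vertex of $G$ into $V'$; weak reversibility then forces each such target $t$ to have outgoing edges in $G'$, while dynamical equivalence forces the net vector of $G'$ at $t$ to vanish (since $t$ is not a source of $G$), and $0=\sum_i \lambda_i(\bw_i - t)$ with all $\lambda_i>0$ places $t$ in the convex hull of the other vertices of $G'$. But targets of a strongly endotactic network can be extreme points lying outside $\New(G)$: take sources at the corners of the unit square with reactions $(0,0)\to(2,2)$, $(1,0)\to(-1,1)$, $(1,1)\to(-1,-1)$, $(0,1)\to(1,-1)$; this is strongly endotactic with all sources on the boundary of $\New(G)$, yet the target $(2,2)$ strictly maximizes $x+y$ over every vertex available to your $G'$ (all sources, all other targets, and any interior hub $\bp_0$), so every positive combination of outgoing vectors at $(2,2)$ has strictly negative inner product with $(1,1)$ and can never equal zero. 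The paper avoids this trap entirely: its $G'$ has vertex set equal to the source vertices of $G$ only, and each original reaction $\by\to\by'$ is re-realized as a positive combination of edges from $\by$ toward adjacent corner and side source vertices on the boundary of $\New(G)$ --- which is possible precisely by your tangent-cone lemma, since the extreme rays of the tangent cone at a boundary source point toward its neighboring boundary sources (in particular, reactions along a polytope edge are replaced by a single edge to the adjacent corner, which handles your one-dimensional-cone worry without retaining any original target). Redirecting all reactions to source vertices in this way, and then verifying combinatorially that the resulting boundary graph is weakly reversible with one linkage class, is the main work of the proof and is absent from your proposal.
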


\begin{proof}

Let $\{ \by_1, \ldots, \by_m \}$ denote source vertices in $G = (V, E)$ and all of them lie on the boundary of $\New (G)$ from the setting.
Since two-dimensional E-graph $G$ has a two-dimensional stoichiometric subspace, we assume that the source vertices $\by_1, \ldots, \by_m$ are in clockwise order over $\New (G)$.
Further, we divide source vertices $\{ \by_1, \ldots, \by_m \}$ into two categories: 
\begin{enumerate}[label=(\roman*)]
\item For $1 \leq i \leq m$, $\by_i$ is called a \emph{corner vertex} if
\begin{equation} \label{def:courner}
\by_{i} - \by_{i-1}
\ \text{ is linearly independent to }
\by_{i+1} - \by_{i}.
\end{equation}
We let $\{ \by_{c_1}, \ldots, \by_{c_k} \}$ denote all corner vertices in $G$ under the clockwise order. 

\item For $1 \leq i \leq m$, $\by_i$ is called a \emph{side vertex} if there exists $k \in \R$, such that 
\begin{equation} \label{def:side}
\by_{i} - \by_{i-1} 
= k ( \by_{i+1} - \by_{i} ).
\end{equation}
We let $\{ \by_{s_1}, \ldots, \by_{s_l} \}$ denote all side vertices in $G$ under the clockwise order.
\end{enumerate} 
Here we abuse the notation by letting $\by_{m+1}$ denote $\by_{1}$ and letting $\by_{c_{k+1}}$ denote $\by_{c_1}$.

Since all source vertices are on the boundary of $\New (G)$, every side vertex must lie on the line between two adjacent corner vertices. Thus, for any $1 \leq i \leq l$, there exists two adjacent corner vertices $\by_{c_j}, \by_{c_{j+1}}$ such that
\begin{equation} \label{side_corner}
\by_{s_i} - \by_{c_j} = \nu ( \by_{c_{j+1}} - \by_{c_j} )
\ \text{ with } \
0 < \nu < 1.
\end{equation}
Hence, we let $\{ \by_{s_{q(j)+1}}, \ldots, \by_{s_{q(j+1)}} \}$ denote all side vertices between $\by_{c_j}$ and $\by_{c_{j+1}}$.
Furthermore, since $G =(V, E)$ is strongly endotactic, then for any $\by \to \by' \in E$,
\begin{equation} \label{relint_partial}
\by' - \by \in relint[Cone_{G} (\by)]
\ \text{ or } \ 
\by' - \by \in \partial Cone_{G} (\by).
\end{equation}

Now we are ready to construct an E-graph $G' = (V', E')$ whose vertices are all source vertices in $G$, that is,
\[
V' = \{ \by_1, \ldots, \by_m \} 
= \{ \by_{c_1}, \ldots, \by_{c_k} \}
\cup
\{ \by_{s_1}, \ldots, \by_{s_l} \}.
\]
In the following, we build the edges $E'$ in $G'$ and we will show that the dynamics of $G$ is included within the dynamics of $G'$ and $G'$ it is weakly reversible and has a single linkage class.

\smallskip

\textbf{Step 1:}
We start with the corner vertices $\{ \by_{c_1}, \ldots, \by_{c_k} \}$.
Without loss of generality, we first consider the corner vertex $\by_{c_1}$ and all edges from $\by_{c_1}$. From \eqref{relint_partial}, there are two possibilities for the edges $\by_{c_1} \to \by \in E$, that is,
\begin{equation} \notag
\by - \by_{c_1} \in relint[Cone_{G} (\by_{c_1})]
\ \text{ or } \ 
\by - \by_{c_1} \in \partial Cone_{G} (\by_{c_1}).
\end{equation}

$(a)$
If the edge $\by_{c_1} \to \by \in E$ and $\by - \by_{c_1} \in relint[Cone_{G} (\by_{c_1})]$, then we add two edges in $E'$ as follows:
\[
\by_{c_1} \to \by_{c_k} \in E'
\ \text{ and } \
\by_{c_1} \to \by_{c_2} \in E'.
\]
From \eqref{def:courner}, \eqref{side_corner} and the assumption that $G$ has a two-dimensional stoichiometric subspace $S$, we obtain that
\[
S = \spn \{ \by_{c_k} - \by_{c_1}, \ \by_{c_2} - \by_{c_1} \}.
\]
Together with $\by_{c_k}, \by_{c_2}$ are two adjacent corner vertices to $\by_{c_1}$, there exist two positive constants $\alpha, \beta > 0$ such that
\begin{equation} \label{realization:int}
\by - \by_{c_1} = \alpha ( \by_{c_k} - \by_{c_1} ) + \beta ( \by_{c_2} - \by_{c_1} ).
\end{equation}

$(b)$
Otherwise, the edge $\by_{c_1} \to \by \in E$ and $\by - \by_{c_1} \in \partial Cone_{G} (\by_{c_1})$. 
Since $\by_{c_k}, \by_{c_2}$ are two adjacent corner vertices to $\by_{c_1}$, there exist a positive constant $\gamma > 0$ such that
\begin{equation} \label{realization:partial}
\by - \by_{c_1} = \gamma ( \by_{c_k} - \by_{c_1} )
\ \text{ or } \
\by - \by_{c_1} = \gamma ( \by_{c_2} - \by_{c_1} ),
\end{equation}
and thus we add the edge $\by_{c_1} \to \by_{c_k} \in E'$ or $\by_{c_1} \to \by_{c_2} \in E'$, respectively.

\smallskip

\textbf{Step 2:}
Applying the operation in the cases $(a)$ and $(b)$ on all edges $\by_{c_1} \to \by \in E$, we have added one or both edges as follows:
\begin{equation} \label{two_edges}
\by_{c_1} \to \by_{c_k} \in E'
\ \text{ or } \
\by_{c_1} \to \by_{c_2} \in E'.
\end{equation}

First, suppose only one edge in \eqref{two_edges} is included in $E'$. Assume that $\by_{c_1} \to \by_{c_2} \in E'$.
Recall that $\{ \by_{s_{q(1)+1}}, \ldots, \by_{s_{q(2)}} \}$ denote all side vertices between $\by_{c_1}$ and $\by_{c_2}$, which satisfy  
\begin{equation} \label{side_corner_12}
\by_{s_i} - \by_{c_1} = \nu_i ( \by_{c_{2}} - \by_{c_j} )
\ \text{ with } \
0 < \nu_i < 1
\ \text{ for } \
q(1)+1 \leq i \leq q (2).
\end{equation}
Then we add extra edges from $\by_{c_1}$ to all these side vertices in $E'$, that is, 
\begin{equation} \label{side_edges_12_E'}
\by_{c_1} \to \by_{i} \in E'
\ \text{ for } \
q(1)+1 \leq i \leq q (2).
\end{equation}
For any positive constants $c > 0$, there exists sufficiently small weights $\{ \epsilon_{i}  \}^{q (2)}_{i= q(1)+1}$, such that
\[
c - \sum\limits^{q (2)}_{i= q(1)+1} \epsilon_i \nu_i > 0
\ \text{ with } \
\epsilon_{q(1)+1}, \ldots \epsilon_{q (2)} > 0.
\]
Thus we obtain that
\begin{equation} \notag
\begin{split}
c ( \by_{c_2} - \by_{c_1} ) 
& = \big( c - \sum\limits^{q (2)}_{i= q(1)+1} \epsilon_i \nu_i \big) ( \by_{c_2} - \by_{c_1} ) 
+ \sum\limits^{q (2)}_{i= q(1)+1} \epsilon_i \nu_i ( \by_{c_2} - \by_{c_1} ).
\end{split}
\end{equation}
Therefore, if only one edge in \eqref{two_edges} is included in $E'$, the dynamics of \eqref{two_edges} is included within the dynamics of \eqref{side_edges_12_E'}.

Second, suppose both edges in \eqref{two_edges} are included in $E'$. Then we add edges from $\by_{c_1}$ to all other vertices in $E'$, that is, 
\begin{equation} \label{all_edges_E'}
\by_{c_1} \to \by_{i} \in E'
\ \text{ for } \
1 \leq i \leq m, \
i \neq c_1.
\end{equation}
Since $\{ \by_{1}, \ldots, \by_{m} \}$ lie on the boundary of $\New (G)$, then for any $1 \leq i \leq m$ 
\[
\by_{i} - \by_{c_1} \in Cone_{G} (\by_{c_1}).
\]
and thus there exist two non-negative constants $\alpha_i, \beta_i \geq 0$, such that
\begin{equation} \label{realization:corner}
\by_{i} - \by_{c_1} = \alpha_i ( \by_{c_k} - \by_{c_1} ) + \beta_i ( \by_{c_2} - \by_{c_1} ).
\end{equation}
Given any positive constants $c_1 , c_2 > 0$, there exists sufficiently small weights $\varepsilon_i$ with $1 \leq i \leq m$, such that
\[
c_1 - \sum\limits^{m}_{i=1} w_i \alpha_i > 0,
\ \
c_2 - \sum\limits^{m}_{i=1} w_i \alpha_i > 0.
\]
Therefore, we obtain that
\begin{equation} \notag
\begin{split}
& \ \ \ \ c_1 ( \by_{c_k} - \by_{c_1} ) + c_{2} ( \by_{c_2} - \by_{c_1} ) 
\\& = \big( c_1 - \sum\limits^{m}_{i=1} w_i \alpha_i \big) ( \by_{c_k} - \by_{c_1} ) + \big( c_2 - \sum\limits^{m}_{i=1} w_i \alpha_i \big) ( \by_{c_2} - \by_{c_1} ) 
\\& \qquad + \sum\limits^{m}_{i=1} w_i \big( \alpha_i ( \by_{c_k} - \by_{c_1} ) + \beta_i ( \by_{c_2} - \by_{c_1} ) \big)
\\& = \big( c_1 - \sum\limits^{m}_{i=1} w_i \alpha_i \big) ( \by_{c_k} - \by_{c_1} ) + \big( c_2 - \sum\limits^{m}_{i=1} w_i \alpha_i \big) ( \by_{c_2} - \by_{c_1} ) + \sum\limits^{m}_{i=1} w_i (\by_{i} - \by_{c_1}).
\end{split}
\end{equation}
Therefore, if both edges in \eqref{two_edges} are included in $E'$, the dynamics of \eqref{two_edges} is included within the dynamics of \eqref{all_edges_E'}.

Analogously, following steps 1 and 2 we work on the rest of corner vertices $\by_{c_2}, \ldots, \by_{c_k}$, and add edges from these corner vertices.

\smallskip

\textbf{Step 3:}
Now we deal with the side vertices $\{ \by_{s_1}, \ldots, \by_{s_l} \}$.
Without loss of generality, we start with the side vertex $\by_{s_1}$ and all edges from $\by_{s_1}$. 
Assume that $\by_{s_1}$ is between two adjacent corner vertices $\by_{c_j}, \by_{c_{j+1}}$, thus we have
\begin{equation} \label{side_corner_1}
\by_{s_1} - \by_{c_j} = \nu ( \by_{c_{j+1}} - \by_{c_j} )
\ \text{ with } \
0 < \nu < 1.
\end{equation}
From \eqref{relint_partial}, there are two possibilities for the edges $\by_{s_1} \to \by \in E$, that is,
\begin{equation} \notag
\by - \by_{s_1} \in relint[Cone_{G} (\by_{s_1})]
\ \text{ or } \ 
\by - \by_{s_1} \in \partial Cone_{G} (\by_{s_1}).
\end{equation}

$(a)$
If the edge $\by_{s_1} \to \by \in E$ and $\by - \by_{s_1} \in relint[Cone_{G} (\by_{s_1})]$, then we add three edges in $E'$ as follows:
\begin{equation} \label{three_edges}
\by_{s_1} \to \by_{c_j} \in E', \ \
\by_{s_1} \to \by_{c_{j+1}} \in E'
\ \text{ and } \
\by_{s_1} \to \by_{c_{j+2}} \in E'.
\end{equation}
Since $\by_{c_j}, \by_{c_{j+1}}$ are two adjacent corner vertices to $\by_{s_1}$, together with all source vertices are on the boundary of $\New (G)$, then the reaction vector $\by - \by_{s_1}$ must be in the non-negative span of 
\[
\{ \by_{c_j} - \by_{s_1}, \by_{c_{j+2}} - \by_{s_1} \}
\ \text{ or } \
\{ \by_{c_{j+1}} - \by_{s_1}, \by_{c_{j+2}} - \by_{s_1} \}.
\]
Thus we assume that there exist two positive constants $\alpha, \beta \geq 0$ such that
\begin{equation} \label{realization:int_side}
\by - \by_{s_1} = \alpha ( \by_{c_j} - \by_{s_1} ) + \beta ( \by_{c_{j+2}} - \by_{s_1} ).
\end{equation}
Recall from \eqref{side_corner_1}, we compute that
\begin{equation} \label{side_corner_1_j}
(1 - \nu) ( \by_{c_j} - \by_{s_1} ) + \nu ( \by_{c_{j+1}} - \by_{s_1} ) = 0
\ \text{ with } \
0 < \nu < 1.
\end{equation}
Then for any positive constant $c > 0$, together with \eqref{realization:int_side}, we obtain that
\begin{equation} \notag
\begin{split}
c ( \by - \by_{s_1} ) 
& = c \alpha ( \by_{c_j} - \by_{s_1} ) + c \beta ( \by_{c_{j+2}} - \by_{s_1} )
\\& = \big( c \alpha + (1 - \nu) \big) ( \by_{c_j} - \by_{s_1} ) + c \beta ( \by_{c_{j+2}} - \by_{s_1} ) + \nu ( \by_{c_{j+1}} - \by_{s_1} ).
\end{split}
\end{equation}
Therefore, the dynamics of $\by_{s_1} \to \by$ is included within the dynamics of \eqref{three_edges}.

$(b)$
Otherwise, the edge $\by_{s_1} \to \by \in E$ and $\by - \by_{s_1} \in \partial Cone_{G} (\by_{s_1})$. Then we add two edges in $E'$ as follows:
\begin{equation} \label{two_edges_side}
\by_{s_1} \to \by_{c_j} \in E'
\ \text{ and } \
\by_{s_1} \to \by_{c_{j+1}} \in E'.
\end{equation}
Recall that $\by_{c_j}, \by_{c_{j+1}}$ are two adjacent corner vertices to $\by_{s_1}$, there exist a positive constant $\gamma > 0$ such that
\begin{equation} \notag
\by - \by_{s_1} = \gamma ( \by_{c_j} - \by_{s_1} )
\ \text{ or } \
\by - \by_{s_1} = \gamma ( \by_{c_{j+1}} - \by_{s_1} ).
\end{equation}
Assume that there exist a positive constant $\alpha, \beta > 0$, such that
\begin{equation} \label{realization:partial_side}
\by - \by_{s_1} = \gamma ( \by_{c_j} - \by_{s_1} ).
\end{equation}
Then for any positive constant $c > 0$, together with \eqref{side_corner_1_j} and \eqref{realization:partial_side}, we obtain that
\begin{equation} \notag
\begin{split}
c ( \by - \by_{s_1} ) 
= c \gamma ( \by_{c_j} - \by_{s_1} )
= \big( c \gamma + (1 - \nu) \big) ( \by_{c_j} - \by_{s_1} ) + \nu ( \by_{c_{j+1}} - \by_{s_1} ).
\end{split}
\end{equation}
Therefore, the dynamics of $\by_{s_1} \to \by$ is included within the dynamics of \eqref{two_edges_side}.

Then we apply the operation in the cases $(a)$ and $(b)$ on all edges $\by_{s_1} \to \by \in E$, and the graph $E'$ must include two edges as follows:
\begin{equation} \label{two_edges_side_E'}
\by_{s_1} \to \by_{c_j} \in E'
\ \text{ and } \
\by_{s_1} \to \by_{c_{j+1}} \in E'.
\end{equation}
Analogously, following step 3 we work on the rest of the side vertices $\by_{s_2}, \ldots, \by_{s_l}$, and add edges from these side vertices.

\smallskip

\textbf{Step 4:}
Now we complete the construction of the E-graph $G' = (V', E')$, and we can check that the dynamics of $G$ is included within the dynamics of $G'$. It remains to show that $G' = (V', E')$ is weakly reversible and has a single linkage class.  

We claim that for any distinct corner vertices $\by_{c_i}, \by_{c_j}$ with $i \neq j$, there exists a path in $E'$ from $\by_{c_i}$ to $\by_{c_j}$.
Without loss of generality, we start with the corner vertex $\by_{c_1}$.
From step 1, we have added one or both edges from $\by_{c_1}$ to its adjacent corner vertices $\by_{c_2}, \by_{c_k}$ in $E'$, that is,
\[
\by_{c_1} \to \by_{c_k}
\ \text{ or } \
\by_{c_1} \to \by_{c_2}.
\]
Assume that $\by_{c_1} \to \by_{c_2} \in E'$, then we focus on the corner vertex $\by_{c_2}$ and consider the edge from $\by_{c_2}$.
Following this pattern, there are two possibilities:
\begin{enumerate}[label=(\roman*)]
\item For any $1 \leq i \leq k$, $\by_{c_i} \to \by_{c_{i+1}} \in E'$. Here $\by_{c_{k+1}}$ denotes $\by_{c_1}$.

\item There exists some $2 \leq i \leq k$, such that $\by_{c_i} \to \by_{c_{i+1}} \notin E'$.
\end{enumerate}

In the first case, there exists a cycle in $E'$ as follows:
\[
\by_{c_1} \to \by_{c_2} \to \ldots \to \by_{c_{k}} \to \by_{c_1}.
\]
Thus there is a path between arbitrary two distinct corner vertices in $E'$, and we prove the claim.

In the second case, assume that $i$ is the first index such that 
\[
\by_{c_i} \to \by_{c_{i+1}} \notin E'
\ \text{ with } \
2 \leq i \leq k.
\]
From step 2, together with \eqref{side_edges_12_E'}, there is an edge $\by_{c_i} \to \by_{c_{i-1}} \in E'$ and other edges from $\by_{c_i}$ follows 
\begin{equation} \notag
\by_{c_i} \to \by_{j} \in E'
\ \text{ for } \
q(i-1)+1 \leq j \leq q (i),
\end{equation}
where $\by_{q(i-1)+1}, \ldots, \by_{q (i)}$ are side vertices between corner vertices $\by_{c_i}$ and $\by_{c_{i-1}}$.

Now consider the corner vertex $\by_{c_{i-1}}$.
From the assumption, we have 
\[
\by_{c_{i-1}} \to \by_{c_{i}} \in E'
\ \text{ and } \
\by_{c_{i-2}} \to \by_{c_{i-1}} \in E'.
\]
Note that $\by_{c_1}, \ldots, \by_{c_k}$ are corner vertices from the boundary of $\New (G)$.
There exists a vector $\bv \in \R^n$ such that
\begin{equation} \notag
\bv \cdot ( \by_{c_{i-1}} - \by_{c_{i-2}} ) < 0
\ \text{ and } \
\bv \cdot ( \by_{c_{i}} - \by_{c_{i-1}} ) = 0.
\end{equation}
Since $G$ is strongly endotactic, together with $\by_{c_i} \to \by_{c_{i+1}} \notin E'$, we get
\[
\by_{c_{i-1}} \to \by_{c_{i-2}} \in E'.
\]
From step 2, $E'$ includes all edges from $\by_{c_{i-1}}$ to every other vertices, and thus 
\begin{equation} \notag
\by_{c_{i-1}} \to \by_{c_{j}} \in E'
\ \text{ with } \
1 \leq j \leq k, \ j \neq i.
\end{equation}

We proceed to the vertex $\by_{c_{i+1}}$. 
From step 1, $E'$ contains one or both edges from $\by_{c_{i+1}}$ to its adjacent corner vertices $\by_{c_{i}}, \by_{c_{i+2}}$, that is,
\[
\by_{c_{i+1}} \to \by_{c_{i}}
\ \text{ or } \
\by_{c_{i+1}} \to \by_{c_{i+2}}.
\]
If $\by_{c_{i+1}} \to \by_{c_{i}} \in E'$, then we obtain an cycle 
\[
\by_{c_{i-1}} \to \by_{c_{i+1}} \to \by_{c_{i}} \to \by_{c_{i-1}} \in E'.
\]
Otherwise, $\by_{c_{i+1}} \to \by_{c_{i+2}} \in E'$ and we have a new path
\[
\by_{c_{i-1}} \to \by_{c_{i+1}} \to \by_{c_{i+2}} \in E'.
\]
In both cases, we can proceed to the vertex $\by_{c_{i+2}}$. Analogously, we iterate the above argument on the rest of the corner vertices and thus conclude the claim.

\smallskip

\textbf{Step 5:}
Next we claim that for any side vertex $\by_{s_i}$ with $1 \leq i \leq l$, there exists corner vertices $\by_{c_a}, \by_{c_b}$ such that
\[
\by_{c_a} \to \by_{s_i} \in E'
\ \text{ and } \
\by_{s_i} \to \by_{c_b} \in E'.
\]

Without loss of generality, we consider the side vertex $\by_{s_1}$. Assume that $\by_{s_1}$ is between two adjacent corner vertices $\by_{c_j}, \by_{c_{j+1}}$, together with \eqref{two_edges_side_E'} in step 3, the graph $E'$ includes two edges as follows:
\begin{equation} \notag
\by_{s_1} \to \by_{c_j} \in E'
\ \text{ and } \
\by_{s_1} \to \by_{c_{j+1}} \in E'.
\end{equation}
Therefore, we find edges from $\by_{s_1}$ to corner vertices.
Similarly, from step 3 there are edges from all side vertices to their adjacent corner vertices in $E'$.

On the other hand, from step 4 we obtain that either there exists a cycle in $E'$ given by
\[
\by_{c_1} \to \by_{c_2} \to \ldots \to \by_{c_{k}} \to \by_{c_1}.
\]
or there exists a corner vertex $\by_{c_a}$ such that 
\begin{equation} \notag
\by_{c_a} \to \by_{i} \in E'
\ \text{ with } \
1 \leq i \leq m, \ i \neq c_a.
\end{equation}
Consider the side vertex $\by_{s_1}$. 
In the first case, since $\by_{c_j} \to \by_{c_{j+1}} \in E'$, there is an edge $\by_{c_j} \to \by_{s_1} \in E'$ from the construction in step 2.
In the second case, it is clear that $\by_{c} \to \by_{s_1} \in E'$.
Analogously, edges from corner vertices to every side vertex in $E'$ exist from the construction.
Together with the previous step, we prove this claim.

\smallskip

\textbf{Step 6:}
Finally, we prove that $G' = (V', E')$ is weakly reversible and has a single linkage class.
It suffices for us to show that for any distinct vertices $\by_{i}, \by_{j}$ with $i \neq j$, there exists a path in $E'$ from $\by_{i}$ to $\by_{j}$.

Given two distinct vertices $\by_{i}, \by_{j}$, suppose that $\by_{i}, \by_{j}$ are both corner vertices, then from the claim in step 4 there exists a path from $\by_{i}$ to $\by_{j}$.
If both of $\by_{i}, \by_{j}$ are side vertices, from the claim in step 5 there exist two corner vertices $\by_{c_a}, \by_{c_b}$ such that
\[
\by_{i} \to \by_{c_a} \in E'
\ \text{ and } \
\by_{c_b} \to \by_{j} \in E'.
\]
Then using the claim in step 4, there exists a path from $\by_{c_a}$ to $\by_{c_b}$, and thus
\[
\by_{i} \to \by_{c_a} \to \cdots \to \by_{c_b} \to \by_{j}.
\]
We omit the case when one of $\by_{i}, \by_{j}$ is a side vertex since it directly follows from the above.
\end{proof}

\begin{theorem}
\label{thm:two_strong_endo_weak}

Let $G =(V, E)$ be a strongly endotactic 2D E-graph with a two-dimensional stoichiometric subspace. Assume there exists a source vertex $\by_0$ on the boundary of $\New (G)$ such that the net reaction vector corresponding to $\by_0$ points strictly in the interior of $\New (G)$. Then there exists a weakly reversible single linkage E-graph $G'$ such that $G\sqsubseteq G'$.
\end{theorem}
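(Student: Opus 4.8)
The plan is to reuse the construction of Theorem~\ref{thm:our_theorem} on the boundary of $\New(G)$ and to attach the interior source vertices to the resulting weakly reversible core using the distinguished vertex $\by_0$. Write $V_s = B \cup I$, where $B$ consists of the source vertices lying on $\partial \New(G)$ (the corner and side vertices, in the terminology of Theorem~\ref{thm:our_theorem}) and $I$ consists of the source vertices in the interior of $\New(G)$; note that the corners of $\New(G)$ are always source vertices, so $B$ contains every corner vertex. The one genuinely new feature relative to Theorem~\ref{thm:our_theorem} is the presence of $I$: for weak reversibility each $\by \in I$ needs both outgoing and incoming edges in $G'$, and the incoming edges are the difficulty, since they must be ``paid for'' by the net reaction vector of some other vertex.

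First I would reduce the boundary to the setting of Theorem~\ref{thm:our_theorem}. The key observation is that for any $\bv \in \R^n$ the minimum of $\bv \cdot \by$ over $\by \in V_s$ is attained at a corner of $\New(G)$, hence at a vertex of $B$; consequently the constraints imposed on $G$ by strong endotacticity (Definition~\ref{def:endotactic}) involve only the boundary vertices and are exactly those one would have if $I$ were empty. In particular, endotacticity forces every reaction vector $\by' - \by$ with $\by \in B$ to lie in the tangent cone of $\New(G)$ at $\by$ (otherwise an inward normal at $\by$ would violate the endotactic condition at the minimizing corner). I can therefore run the construction of Theorem~\ref{thm:our_theorem} verbatim on the vertices of $B$: this produces a set of edges, supported on corner-directed reactions (and, for side vertices, the associated two- or three-edge replacements), such that the net reaction vector of every boundary vertex other than $\by_0$ is realized and such that $B$ forms a single strongly connected component.

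Next I would attach the interior vertices. For $\by \in I$ the directions $\{ \by_c - \by \}$ to the corners positively span $\R^n$ restricted to the stoichiometric subspace, so for every $\bk$ the net reaction vector $\bw_{\by}$ is a strictly positive combination of corner directions; I add the corresponding edges $\by \to \by_c$, which realize $\bw_{\by}$ and send $\by$ into the core. To make the interior vertices reachable from the core I add one ``seed'' edge $\by_0 \to \by^{(1)}$ for a fixed $\by^{(1)} \in I$, together with a cycle $\by^{(1)} \to \by^{(2)} \to \cdots \to \by^{(1)}$ through all of $I$. The cycle edges and the corner edges together realize each $\bw_{\by^{(i)}}$: since corner directions positively span from an interior point, one may assign an arbitrarily small positive weight to each cycle edge and absorb the remainder $\bw_{\by^{(i)}} - \varepsilon(\by^{(i+1)} - \by^{(i)})$ into corner-directed edges. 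The seed edge is where the hypothesis enters: because $\bw_{\by_0}$ points strictly into $\New(G)$, it lies in the interior of the tangent cone of $\New(G)$ at $\by_0$, so $\bw_{\by_0} - \varepsilon(\by^{(1)} - \by_0)$ still lies in that cone for small $\varepsilon > 0$ and can thus be realized by the boundary edges out of $\by_0$ built above. With these edges in place every $\by \in I$ reaches $B$ (via a corner edge) and is reached from $B$ (via $\by_0 \to \by^{(1)}$ and the interior cycle), so $G'$, whose vertex set is $V_s$ and whose edges are those constructed above, is strongly connected, i.e.\ weakly reversible with a single linkage class, and $G \sqsubseteq G'$.

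The main obstacle I anticipate is bookkeeping the two global requirements simultaneously: the edge set of $G'$ must be fixed once and for all, yet weights $\bk'$ realizing it must exist for \emph{every} $\bk$, even though the net reaction vectors (and in particular $\bw_{\by_0}$) depend on $\bk$. This forces the correct reading of the hypothesis: for $G \sqsubseteq G'$ to hold one needs the slack at $\by_0$ to be available for all $\bk$, equivalently every reaction vector out of $\by_0$ must point strictly into $\New(G)$, so that $\bw_{\by_0}$ is interior to the tangent cone for every choice of rate vector. The remaining care is in verifying that the boundary construction of Theorem~\ref{thm:our_theorem} is genuinely insensitive to the interior vertices --- this is precisely the minimizer-at-corners observation above --- and that the small-weight decompositions at the interior vertices can be carried out with a single fixed choice of target corners, independent of $\bk$.
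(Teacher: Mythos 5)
Your proposal is correct and follows essentially the same route as the paper's proof: restrict the construction of Theorem~\ref{thm:our_theorem} to the source vertices on the boundary of $\New(G)$, use the strictly inward net reaction vector at $\by_0$ to split off small-weight edges from $\by_0$ into the interior while keeping the remainder realizable on the boundary, and give every interior source vertex outgoing edges back to boundary vertices. The only difference is cosmetic wiring: the paper attaches the interior via a star of edges $\by_0 \to \by^*$ to \emph{every} interior vertex $\by^*$ (and sends each interior vertex to all boundary vertices, not just the corners), whereas you use one seed edge plus a cycle through the interior; both yield the same weakly reversible single-linkage-class realization.
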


\begin{proof}

From Definition \ref{def:newton}, any vertex $\by \in G$ has two possible cases: 
\begin{enumerate}[label=(\roman*)]
\item $\by$ lies on the boundary of $\New (G)$,

\item $\by$ lies in the interior of $\New (G)$.
\end{enumerate}

First, we consider the vertices on the boundary of $\New (G)$, denoted by
\begin{equation} \notag
\{ \by_0, \ldots, \by_b \}.
\end{equation}
Using Theorem~\ref{thm:our_theorem}, we obtain that the network formed by considering only the vertices on the boundary of $\New (G)$ is weakly reversible and consists of a single linkage.

Without loss of generality, we let $\by_0$ denote a vertex whose net reaction vector $\bw_{\by_0}$ points strictly into the interior of $\New (G)$. Further, we let $\by^*$ denote any arbitrary vertex in the interior of $\New (G)$. Since $\New (G)$ is the convex hull formed by the source vertices of $G$, together with the assumption that the stoichiometric subspace is two-dimensional, there exist two vertices $\by_1, \by_2$ on the boundary of $\New (G)$, such that for any vertex $\by \in V$,
\begin{equation} \notag
\by - \by_0 \in \{ \alpha_1 (\by_1 - \by_0) + \alpha_2 (\by_2 - \by_0) \ \big| \ \alpha_1, \alpha_2 \in \R_{\geq 0} \}.
\end{equation}
Furthermore, for any vector $\bv$ pointing strictly into the interior of $\New (G)$, it can be expressed as a positive combination of $\by_1, \by_2$, that is,
\begin{equation} \notag
\bv \in \{ \alpha_1 (\by_1 - \by_0) + \alpha_2 (\by_2 - \by_0) \ \big| \ \alpha_1, \alpha_2 \in \R_{> 0} \}.
\end{equation}

Note that from the assumption on $\bw_{\by_0}$, there exist positive constants $\alpha_{0, 1}, \alpha_{0, 2} > 0$, such that
\begin{equation} \notag
\bw_{\by_0} = \alpha_{0, 1} (\by_1 - \by_0) + \alpha_{0, 2} (\by_2 - \by_0).
\end{equation}
Similarly, since $\by^*$ is in the interior of $\New (G)$, there exist positive constants $\alpha^*_{1}, \alpha^*_{2} > 0$, such that the vector $\by^* - \by_0$ follows
\begin{equation} \notag
\by^* - \by_0 = \alpha^*_{1} (\by_1 - \by_0) + \alpha^*_{2} (\by_2 - \by_0).
\end{equation}
Then there exists a sufficiently small value $k$ satisfying
\begin{equation} \notag
k \alpha^*_{1} < \alpha_{0, 1}
\ \text{ and } \
k \alpha^*_{2} < \alpha_{0, 2},
\end{equation}
and thus we get
\begin{equation} \notag
\begin{split}
\bw_{\by_0} 
& = (\alpha_{0, 1} - k \alpha^*_{1}) (\by_1 - \by_0) + (\alpha_{0, 2} - k \alpha^*_{2}) (\by_2 - \by_0) + \big( k \alpha^*_{1} (\by_1 - \by_0) + k \alpha^*_{2} (\by_2 - \by_0) \big)
\\& = 
\underbrace{
(\alpha_{0, 1} - k \alpha^*_{1}) (\by_1 - \by_0) + (\alpha_{0, 2} - k \alpha^*_{2}) (\by_2 - \by_0)
}_{\tilde{\bw}_{\by_0}}
+ k (\by^* - \by_0).
\end{split}
\end{equation}

Since $\tilde{\bw}_{\by_0}$ is in the positive span of $\{ \by_1 - \by_0, \ \by_2 - \by_0 \}$, this implies that $\tilde{\bw}_{\by_0}$ points strictly into the interior of $\New (G)$.
Consider $\tilde{\bw}_{\by_0}$ as the modified net reaction vector of $\by_0$, we again use Theorem~\ref{thm:our_theorem} to obtain a realization that is weakly reversible and consists of a single linkage class (with respect to the source vertices on the boundary of $\New (G)$).

On the other hand, the vector $k (\by^* - \by_0)$ can be realized by adding an edge from $\by_0$ to $\by^*$ with the reaction rate constant $k$. Note that $\by^*$ represents any arbitrary vertex in the interior of $\New (G)$. Hence, we can iterate the above step on all vertices in the interior of $\New (G)$. This gives a realization $G' = (V', E')$ which satisfies that for any vertex $\by^*$  in the interior of $\New (G)$,
\begin{equation} \label{eq:y0-y*}
\by_0 \to \by^* \in E'.
\end{equation}
Thus, source vertices on the boundary of $\New (G)$ form a weakly reversible realization consisting of a single linkage class.

\medskip

Next, we consider the vertices in the interior of $\New (G)$.
Recall that $\New (G)$ is the convex hull formed by the source vertices of $G$.
Assume $\by^*$ is a vertex in the interior of $\New (G)$, then for any vertex $\by \in V$,
\begin{equation} \label{eq:y*-y0}
\by - \by^* \in \Big\{ \sum\limits^{b}_{i = 0} \alpha_i (\by_i - \by^*) \ \big| \ \alpha_0, \ldots, \alpha_b \in \R_{> 0} \Big\}.
\end{equation}

Consider $\bw_{\by^*}$ the net reaction vector on $\by^*$, which is the positive combination of reaction vector from $\by^*$, from \eqref{eq:y*-y0} there exist positive values $\alpha^*_0, \ldots, \alpha^*_d$, such that
\begin{equation} \label{eq2:y*-y0}
\bw_{\by^*} = \sum\limits^{b}_{i = 0} \alpha^*_i (\by_i - \by^*)
\ \text{ with } \
\alpha^*_i > 0.
\end{equation}
Thus, the net reaction vector on $\by^*$ can be realized by
\begin{equation}\notag
\by^* \xrightarrow{\alpha^*_i} \by_i
\ \text{ for } \
i = 0, \ldots, b.
\end{equation}
Analogously, we can iterate the above step on all vertices in the interior of $\New (G)$.
This gives a realization $G' = (V', E')$ which satisfies that for any vertex $\by$  in the interior of $\New (G)$,
\begin{equation} \notag
\by \to \by_i \in E'
\ \text{ for } \
i = 0, \ldots, b.
\end{equation}

Our analysis shows that $G' = (V', E')$ is weakly reversible and consists of a single linkage class such that $G \sqsubseteq G'$ and we conclude our result.
\end{proof}

\begin{theorem}[\cite{boyd2004convex}]
\label{thm:separating_hyperplane}

Let $U, V$ be two non-empty convex sets in $\mathbb{R}^n$ such that $U\cap V\neq\emptyset$. Then there exists a vector $0\neq z\in\mathbb{R}^n$ and $\zeta\in\mathbb{R}$ such that for every $x\in U$ and $y\in V$,
\[
\langle y, z\rangle \leq \zeta \leq \langle x, z\rangle.
\]
\end{theorem}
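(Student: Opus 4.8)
The plan is to establish the classical separating hyperplane theorem by the standard reduction to separating the origin from a single convex set; note that the separation conclusion is only attainable when $U$ and $V$ are disjoint, so I read the hypothesis as $U \cap V = \emptyset$. First I would form the Minkowski difference $W := \{x - y \mid x \in U,\ y \in V\}$. Convexity of $U$ and $V$ makes $W$ convex, and $U \cap V = \emptyset$ forces $\mathbf{0} \notin W$. The target inequality $\langle y, z\rangle \le \zeta \le \langle x, z\rangle$ for all $x \in U$, $y \in V$ is equivalent to $\langle x - y, z\rangle \ge 0$ throughout, i.e. to producing a nonzero $z$ with $\langle w, z\rangle \ge 0$ for every $w \in W$. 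Given such a $z$, fixing $x_0 \in U$ and $y_0 \in V$ shows $\sup_{y \in V}\langle y, z\rangle \le \langle x_0, z\rangle$ and $\inf_{x \in U}\langle x, z\rangle \ge \langle y_0, z\rangle$, so $\zeta := \sup_{y \in V}\langle y, z\rangle$ is a finite real number lying in the required range.

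It then remains to separate $\mathbf{0}$ from the convex set $W$, and I would split on whether $\mathbf{0}$ belongs to the closure $\overline{W}$ (itself convex). In the easy case $\mathbf{0} \notin \overline{W}$: intersecting the nonempty closed set $\overline{W}$ with a large enough closed ball produces a compact set on which $w \mapsto |w|$ attains its minimum, yielding a closest point $w_0 \in \overline{W}$ with $w_0 \neq \mathbf{0}$. Comparing $|w_t|^2$ for $w_t = w_0 + t(w - w_0)$, $t \in (0,1]$, and letting $t \to 0^+$ gives the variational inequality $\langle w - w_0,\ -w_0\rangle \le 0$, equivalently $\langle w, w_0\rangle \ge |w_0|^2 > 0$ for all $w \in \overline{W}$. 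Hence $z := w_0$ works and in fact delivers strict separation.

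The case $\mathbf{0} \in \overline{W}$ is the one I expect to be the main obstacle: since $\mathbf{0} \notin W$ it must lie on the boundary of $\overline{W}$, so the nearest point of $\overline{W}$ to $\mathbf{0}$ is $\mathbf{0}$ itself and the previous trick degenerates. Here I would run a limiting argument. Pick exterior points $p_k \to \mathbf{0}$ with $p_k \notin \overline{W}$ (available because $\mathbf{0}$ is a boundary point), let $w_k$ be a nearest point of $\overline{W}$ to $p_k$, and set $\hat z_k := (w_k - p_k)/|w_k - p_k|$. Writing $z_k = w_k - p_k$, the variational inequality $\langle w - w_k,\ z_k\rangle \ge 0$ gives $\langle w - p_k,\ z_k\rangle \ge |z_k|^2$, hence $\langle w - p_k,\ \hat z_k\rangle \ge |w_k - p_k| > 0$ for all $w \in \overline{W}$. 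The unit vectors $\hat z_k$ lie on the compact sphere, so some subsequence converges to a unit vector $z$; letting $k \to \infty$ along it (and using $p_k \to \mathbf{0}$) yields $\langle w, z\rangle \ge 0$ for every $w \in \overline{W} \supseteq W$, with $z \neq \mathbf{0}$, as required.

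The ingredients I would invoke are standard: the closure of a convex set is convex, a nonempty closed convex set has a nearest point to any given point satisfying the obtuse-angle variational inequality, and the unit sphere in $\mathbb{R}^n$ is compact. The only genuinely delicate step is the boundary case, where one must manufacture the exterior approximating sequence $p_k$ and extract a convergent subsequence of the normalized directions $\hat z_k$; finite-dimensionality is exactly what makes this compactness argument clean.
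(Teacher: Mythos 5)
The paper does not actually prove this statement: it is imported as a classical result from the cited convex-analysis text (Boyd--Vandenberghe), so there is no internal proof to compare yours against; your proposal must be judged on its own. On that basis it is essentially correct, and it is the standard argument. You were right to flag the hypothesis: as printed, ``$U\cap V\neq\emptyset$'' is a typo (with $U=V=\mathbb{R}^n$ no nonzero $z$ can satisfy the conclusion), and the intended hypothesis $U\cap V=\emptyset$ is exactly what your reduction uses. The passage to $W=U-V$, the nearest-point variational inequality $\langle w-w_0,-w_0\rangle\le 0$, and the limiting argument with unit normals $\hat z_k$ extracted along a convergent subsequence on the compact sphere are all carried out correctly; in fact your proof handles the general disjoint case, which is stronger than the in-text proof of the cited book, where strict separation is established only under a positive-distance assumption. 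One step, however, is asserted rather than proved: that $\mathbf{0}\in\overline{W}\setminus W$ forces $\mathbf{0}\in\partial\overline{W}$, which is what guarantees the existence of the exterior points $p_k\to\mathbf{0}$. This is false for general sets (a punctured open disk has its missing center in the interior of its closure), so it genuinely requires convexity: for convex $W$, either $W$ has nonempty interior, in which case $\operatorname{int}(\overline{W})=\operatorname{int}(W)\subseteq W$, or $W$ lies in a hyperplane, in which case $\operatorname{int}(\overline{W})=\emptyset$; either way every point of $\overline{W}\setminus W$ is a boundary point of $\overline{W}$. Inserting this short lemma closes the only gap; everything else is sound.
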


\begin{lemma}
\label{lem:newt_G_G'}

Let $G$ and $G'$ be two E-graphs. 
Suppose that $G'$ is weakly reversible and has a single linkage class with $G \sqsubseteq G'$, then 
\begin{equation}\notag
\label{eq:newt_G_G'}
S_{G} = S_{G'}
\ \text{ and } \
\New (G) = \New (G').
\end{equation}
\end{lemma}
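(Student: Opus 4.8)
The plan is to prove the two equalities separately, starting from the hypothesis $G \sqsubseteq G'$, which by definition means that for every $\bk \in \mathbb{R}^{|E|}_{>0}$ there is a $\bk' \in \mathbb{R}^{|E'|}_{>0}$ with $(G,\bk) \sim (G',\bk')$. The key observation is that dynamical equivalence forces the right-hand side polynomial vector fields to agree for all $\bx \in \mathbb{R}^n_{>0}$. Since every reaction vector $\by'-\by$ lies in $S_G$ and $\tilde{\by}'-\by'$ lies in $S_{G'}$, the instantaneous velocity $\tfrac{d\bx}{dt}$ of $(G,\bk)$ lives in $S_G$ while that of $(G',\bk')$ lives in $S_{G'}$. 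Matching these for some choice of $\bx$ (e.g. evaluating near a point where at least one monomial is active) shows $S_G \subseteq S_{G'}$. For the reverse inclusion I would invoke the structure of $G'$: since $G'$ is weakly reversible with a single linkage class, every reaction vector of $G'$ is realized as a genuine contribution to the dynamics, and because $(G,\bk)\sim(G',\bk')$ the span of $G'$'s reaction vectors cannot exceed the span of $G$'s (otherwise the dynamics of $(G,\bk)$ would escape $S_G$, a contradiction). Hence $S_{G'} \subseteq S_G$, giving $S_G = S_{G'}$.

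For the Newton polytope equality, I would exploit the monomial structure of \eqref{eq:mass_action} together with the separating hyperplane result (Theorem~\ref{thm:separating_hyperplane}). The source vertices $V_s$ of $G$ are exactly the exponents appearing in the monomials $\bx^{\by}$ of the vector field of $(G,\bk)$, and similarly $V'_s$ for $(G',\bk')$; since the two vector fields are identical as polynomial functions, their monomial supports must match after accounting for possible cancellations. The subtlety, and where I expect the main obstacle to lie, is that distinct monomials $\bx^{\by}$ are linearly independent as functions on $\mathbb{R}^n_{>0}$, so the coefficient vector $\sum_{\by_0 \to \by' \in E} k_{\by_0 \to \by'}(\by'-\by_0)$ attached to each source $\by_0$ in $G$ must equal the analogous net reaction vector attached to $\by_0$ in $G'$. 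This means the net reaction vector $\bw_{\by_0}$ is an invariant of the dynamics, so the E-graphs of net reaction vectors coincide: $G_{\bW(\bk)} = G'_{\bW(\bk')}$, a fact already used in Lemma~\ref{lem:dyn_wr_fixed}.

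The geometric heart of the Newton polytope argument is to show that a source vertex of one graph cannot be an extreme point lying outside the convex hull of the other graph's sources. Suppose toward contradiction that some vertex $\by^\star \in V'_s$ were a vertex of $\New(G')$ not contained in $\New(G)$. By the separating hyperplane theorem there is a direction $\bv$ with $\bv \cdot \by^\star > \bv \cdot \by$ for all $\by \in V_s$. Evaluating the common vector field along a ray $\bx = e^{t\bv}$ and taking $t \to +\infty$, the monomial $\bx^{\by^\star}$ dominates on the $G'$ side, so its net reaction vector $\bw_{\by^\star}$ must be nonzero (here weak reversibility of $G'$ guarantees $\by^\star$ has outgoing edges and, by the net-vector analysis, a nonzero net contribution in the extreme direction). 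But on the $G$ side no monomial with exponent $\by^\star$ appears, so the dominant growth rate is strictly smaller, making the two sides disagree for large $t$ — contradicting dynamical equivalence. This forces $V'_s \subseteq \New(G)$, hence $\New(G') \subseteq \New(G)$; the symmetric argument (using that $G \sqsubseteq G'$ holds for \emph{all} $\bk$, so in particular one can choose $\bk$ making each source of $G$ dominate) yields $\New(G) \subseteq \New(G')$.

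I anticipate the hardest technical point is justifying that the net reaction vector $\bw_{\by^\star}$ at an extreme source vertex of the weakly reversible graph $G'$ is genuinely nonzero in the separating direction $\bv$: weak reversibility alone guarantees strong connectivity of each linkage class, and one must argue that at a vertex maximizing $\bv \cdot \by$ over $V'_s$, the outgoing reactions point strictly in the $-\bv$ direction (into the polytope), so their weighted sum has a strictly negative $\bv$-component and cannot cancel. This is precisely the endotactic-type reasoning underlying Lemma~\ref{lem:dyn_wr_fixed}, and I would mirror that min/max vertex analysis here, applying it at the $\bv$-maximal rather than $\bv$-minimal source to pin down the dominant term and complete the contradiction.
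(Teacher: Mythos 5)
Your proposal follows essentially the same route as the paper's proof: use linear independence of the monomials $\bx^{\by}$ to match net reaction vectors vertex-by-vertex, deduce $V_s \subseteq V'$ (hence $\New(G) \subseteq \New(G')$) by choosing $\bk$ so each source of $G$ has a nonzero net vector, and run a separating-hyperplane argument at an extreme vertex of $\New(G')$ lying outside $\New(G)$, where weak reversibility forces a strictly inward-pointing (hence nonzero) net reaction vector that contradicts the zero net vector coming from $G$'s side. The only step you justify informally --- that for weakly reversible $G'$ the net reaction vectors span all of $S_{G'}$, which is what actually yields $S_{G'} \subseteq S_G$ (your ``otherwise the dynamics would escape $S_G$'' reasoning is circular on its own, since a priori the dynamics only spans the net-vector subspace) --- is precisely where the paper instead invokes \cite[Lemma 3.9]{craciun2021uniqueness}, so that assertion should be replaced by this citation.
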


\begin{proof}

First, we prove that $S_{G} = S_{G'}$.
From $G \sqsubseteq G'$, given any $\bk \in \mathbb{R}^{|E|}_{>0}$ there exists $\bk' \in \mathbb{R}^{|E'|}_{>0}$ such that for every $\by_0 \in V \cup V'$,
\begin{equation} \label{eq:w_y0=w'_y0}
\underbrace{
\sum\limits_{\by_0 \to \by \in E}
k_{\by_0 \to \by'} ( \by' - \by_0 )
}_{\bw_{\by_0}: \text{ net reaction vector on $\by_0$ in $G$}} \
= \ \underbrace{
\sum\limits_{\by_0 \to \by' \in E'} k'_{\by_0 \to \by'} (\by'- \by_0)
}_{\bw'_{\by_0}: \text{ net reaction vector on $\by_0$ in $G'$}}.
\end{equation}
This implies that for every $\by_0 \in V \cup V'$ and $\by_0 \to \by \in E$,
\[
\by' - \by_0 \in S_{G'},
\]
and thus $S_G \subseteq S_{G'}$.
On the other hand, $G'$ is weakly reversible and \cite[Lemma 3.9]{craciun2021uniqueness} shows that for any $\bk' \in \mathbb{R}^{|E'|}_{>0}$,
\[
S_{G'} =
\spn \Big\{ \sum\limits_{\by_0 \to \by' \in E'} k'_{\by_0 \to \by'} (\by'- \by_0), 
\ \forall \by_0 \in V'
\Big\}.
\]
Together with \eqref{eq:w_y0=w'_y0}, this indicates that $S_{G'} \subseteq S_{G}$ and we conclude $S_{G} = S_{G'}$.

\medskip

Second, we prove $\New (G) = \New (G')$.
Since $G'$ is weakly reversible, \eqref{eq:w_y0=w'_y0} shows $V_{s} \subseteq V'$. Then it suffices to show that for every vertex $\by_0 \in V'$, then
\[
\by_0 \in \New (G). 
\]

We prove this by contradiction. Suppose there exist some vertices in $V'$ lying outside $\New (G)$.
Using Theorem~\ref{thm:separating_hyperplane} combined with the fact $G'$ is weakly reversible and has a single linkage class, there exists a vector $\bv \in \mathbb{R}^n$ and $\by_0 \to \by_1 \in E'$ such that $\bv \cdot (\by_1 - \by_0 ) < 0$ and 
\[
\bv \cdot \by_0 \leq \bv \cdot \by'
\ \text{ for every }
\by' \in V'.
\]
This implies that for any $\bk' \in \mathbb{R}^{|E'|}_{>0}$,
\begin{equation} \notag
\bw'_{\by_0} \cdot \bv = \Big( \sum\limits_{\by_0 \to \by' \in E'} k'_{\by_0 \to \by'} (\by'- \by_0) \Big) \cdot \bv \geq k'_{\by_0 \to \by_1} (\by_1 - \by_0) > 0,
\end{equation}
and thus $\bw'_{\by_0} \neq \mathbf{0}$. This contradicts with \eqref{eq:w_y0=w'_y0} since $\bw_{\by_0} = \mathbf{0}$ from $\by_0 \notin V$.
\end{proof}

We are now ready to present the main result of this section.

\begin{theorem}
\label{thm:our_theorem_2}

Let $G=(V, E)$ be a strongly endotactic 2D E-graph with a two-dimensional stoichiometric subspace. Then there exists a weakly reversible single linkage class E-graph $G'\neq G$ such that $G\sqsubseteq G'$ if and only if at least one of the following holds:
\begin{enumerate}[label=(\roman*)]
\item all source vertices of $G$ lie on boundary of $\New (G)$.

\item there exists a source vertex $\by_0$ on the boundary of $\New (G)$, such that the net reaction vector corresponding to $\by_0$ points strictly in the interior of $\New (G)$.
\end{enumerate}
\end{theorem}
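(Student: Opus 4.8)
The statement is an equivalence, and the plan is to prove the two implications separately, with almost all of the work in the converse.

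For the forward direction (sufficiency) the two constructive results already established do the job: if (i) holds I would apply Theorem~\ref{thm:our_theorem}, and if (ii) holds I would apply Theorem~\ref{thm:two_strong_endo_weak}, each of which yields a weakly reversible single-linkage-class $G'$ with $G \sqsubseteq G'$. The only supplementary point is the requirement $G' \neq G$. When $G$ is not itself weakly reversible with a single linkage class this is automatic; when it is, I would observe that the net reaction vectors are decomposed along corner and adjacent directions non-uniquely in those constructions, so one can always route the decomposition through a different edge set and obtain $G' \neq G$. I expect this to be bookkeeping rather than a genuine obstacle.

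The content is in the converse (necessity): assuming a weakly reversible single-linkage-class $G' \neq G$ with $G \sqsubseteq G'$, I must recover (i) or (ii). I would first apply Lemma~\ref{lem:newt_G_G'} to get $\New (G) = \New (G') =: P$ (a genuine two-dimensional polygon, since $S_G$ is two-dimensional) and $S_G = S_{G'}$. Assume (i) fails, so $G$ has a source vertex $\by^*$ in the interior of $P$; I will derive (ii). Since the reaction vectors out of $\by^*$ are nonzero, its net reaction vector $\bw_{\by^*}$ is nonzero for generic $\bk$, and because $G'$ is a fixed graph realizing $G$ for every $\bk$, the monomial $\bx^{\by^*}$ must be produced by $G'$; hence $\by^* \in V'$ is an interior vertex of $G'$. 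The corner vertices of $P$, being extreme points of $\operatorname{conv}(V') = \New (G')$, also lie in $V'$. As $G'$ is weakly reversible with a single linkage class it is strongly connected, so there is a directed path in $E'$ from a corner of $P$ to $\by^*$; let $\by_b \to \by_i$ be its first edge leaving $\partial P$, so that $\by_b \in \partial P$ and $\by_i$ is interior. By convexity $\by_i - \by_b$ lies in the relative interior of the tangent cone of $P$ at $\by_b$, while every reaction out of $\by_b$ in $G'$ goes to a vertex of $V' \subseteq P$ and so lies in that (closed, convex) tangent cone. Since adding an element of a convex cone to an interior element stays in the interior, $\bw'_{\by_b}$ points strictly into the interior of $P$; matching net reaction vectors through $G \sqsubseteq G'$ gives $\bw_{\by_b} = \bw'_{\by_b}$ strictly inward, and $\bw_{\by_b} \neq 0$ forces $\by_b \in V_s$. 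This is precisely (ii).

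The step I expect to be delicate is making (ii) well-posed, since ``the net reaction vector corresponding to $\by_0$'' depends on $\bk$ while (ii) is meant as a structural property of $G$. I would handle this by first proving from endotacticity that every reaction leaving a boundary source vertex points into the tangent cone of $\New (G)$ there: a reaction pointing strictly outward would, tested against the corresponding outer normal, contradict the endotactic condition. Given this, for a boundary source vertex the net reaction vector is strictly inward for \emph{every} positive $\bk$ exactly when the outgoing reaction directions do not all lie on a single bounding ray of the tangent cone — a $\bk$-independent, combinatorial condition. This legitimizes the phrasing of (ii) and simultaneously upgrades the single-rate conclusion of the previous paragraph to the required structural statement; it also confirms the consistency check that any weakly reversible single-linkage-class $G$ itself already satisfies (i) or (ii).
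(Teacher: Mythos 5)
Your proposal is correct and follows essentially the same route as the paper: sufficiency by invoking Theorems~\ref{thm:our_theorem} and~\ref{thm:two_strong_endo_weak}, and necessity by combining Lemma~\ref{lem:newt_G_G'} with strong connectivity of $G'$ to produce an edge $\by_b \to \by_i$ from a boundary vertex to an interior vertex, then using the tangent-cone argument and matching of net reaction vectors to land in case (ii) --- exactly the paper's argument, written out in more detail. One caveat on your final paragraph: the combinatorial criterion ``the outgoing directions do not all lie on a single bounding ray'' is wrong for a side vertex whose reactions point in \emph{both} directions along the edge of $\New(G)$ (the net vector then stays on that line for every $\bk$), but this is harmless because no such ``upgrade'' is needed --- since $\by_b \to \by_i$ is an edge of the \emph{fixed} graph $G'$ and every realization $\bk'$ is strictly positive, your main argument already gives that $\bw_{\by_b}$ points strictly inward for \emph{every} choice of $\bk$.
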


\begin{proof}

$(\Longleftarrow)$ 
In case $(i)$, suppose all source vertices of $G$ lie on the boundary of $\New (G)$. Theorem \ref{thm:our_theorem} shows that there exists a weakly reversible E-graph $G'$ such that it has a single linkage class and $G\sqsubseteq G'$. 

In case $(ii)$, suppose some source vertices lie in the interior of $\New (G)$.
Since there exists a source vertex $\by_0$ on the boundary of $\New (G)$ such that the net reaction vector corresponding to $\by_0$ points strictly in the interior of $\New (G)$, Theorem \ref{thm:two_strong_endo_weak} shows that there exists a weakly reversible single linkage E-graph $G'$, such that $G\sqsubseteq G'$.

\smallskip

$(\Longrightarrow)$ 
Suppose that $G \sqsubseteq G'$ and $G'$ is a weakly reversible single linkage E-graph.
If all vertices lie on the boundary of $\New (G)$, then it belongs to the case $(i)$.

Otherwise, suppose some source vertices lie in the interior of $\New (G)$.
Since $G' = (V', E')$ is weakly reversible and has a single linkage class, then we have
\[
\by_0 \to \by_1 \in E',
\]
where $\by_0$ and $\by_1$ lying on the boundary of $\New (G)$ and in the interior of $\New (G)$, respectively.
From $G \sqsubseteq G'$, for every $\bk \in \mathbb{R}^{|E|}_{>0}$ there exists $\bk' \in \mathbb{R}^{|E'|}_{>0}$ such that
\begin{equation} \notag
\sum\limits_{\by_0 \to \by \in E}
k_{\by_0 \to \by'} ( \by' - \by_0 ) = \sum\limits_{\by_0 \to \by' \in E'} k'_{\by_0 \to \by'} (\by'- \by_0).
\end{equation}
From Lemma \ref{lem:newt_G_G'}, $G'$ possesses a two-dimensional stoichiometric subspace, together with $\by_0 \to \by_1 \in E'$, we obtain that
\[
\sum\limits_{\by_0 \to \by' \in E'} k'_{\by_0 \to \by'} (\by'- \by_0) 
\text{ points strictly in the interior of }
\New (G').
\]
Lemma \ref{lem:newt_G_G'} further shows that $\New (G) = \New (G')$ and thus the net reaction vector to $\by_0$ points strictly in the interior of $\New (G)$.
\end{proof}

\subsection{Endotactic Networks in Higher Dimensions}
\label{sec:general_strong_endo}

It is important to note that Theorem \ref{thm:our_theorem_2} is insufficient when applied to cases in higher dimensions ($\geq 3$).  
This limitation is illustrated in Figure~\ref{fig:3dexample}, which presents a counterexample where there is no weakly reversible E-graph whose dynamics can include the dynamics generated by this network, even though it satisfies the conditions of Theorem \ref{thm:our_theorem_2}.

In order to address this issue in higher-dimensional scenarios, a stronger sufficient condition is required, which is precisely provided by Theorem~\ref{thm:suff_strong_endo}.

\begin{figure}[!ht]
\centering
\includegraphics[width=0.4\textwidth]{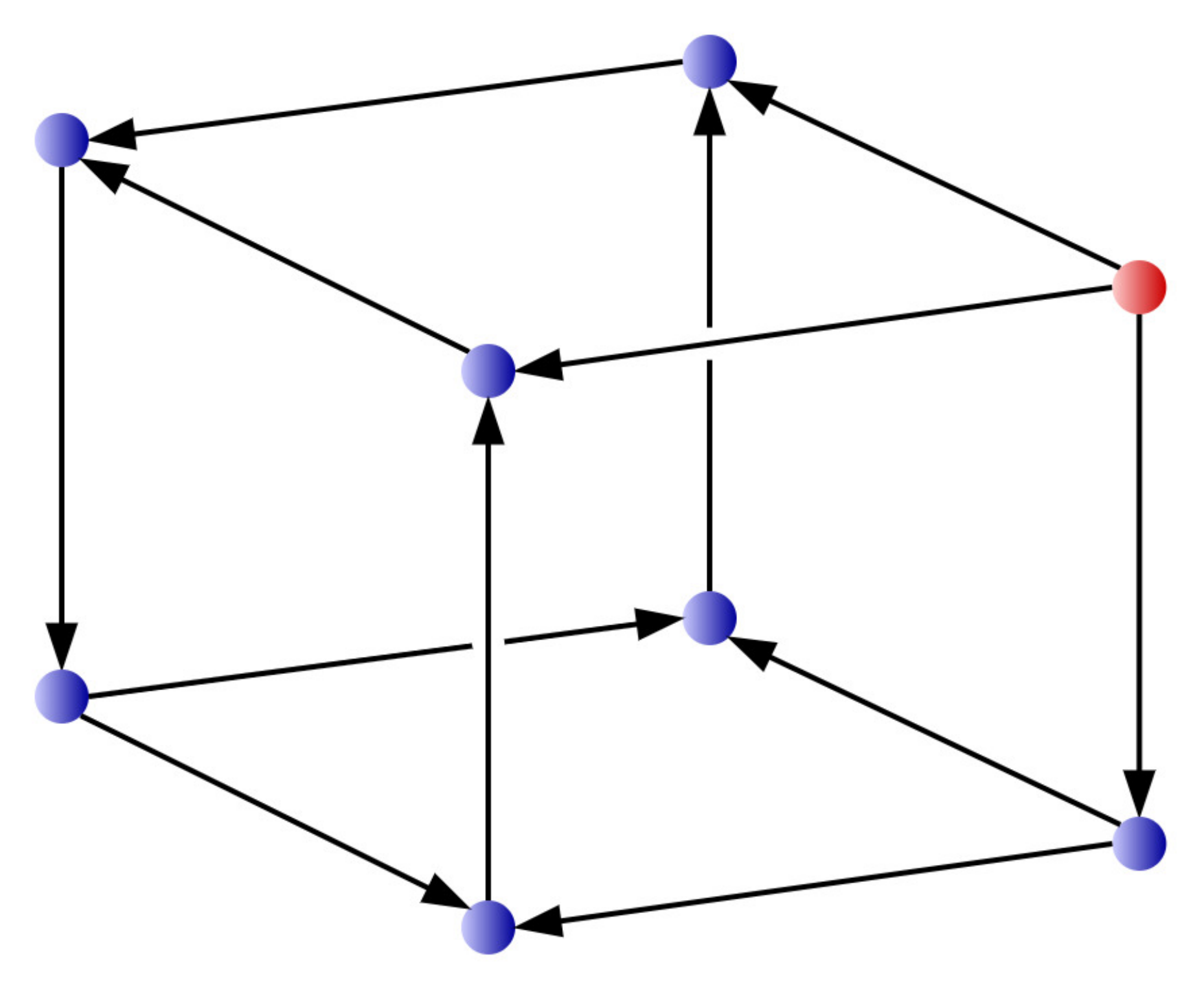}
\caption{
A strongly endotactic 3D E-graph $G$.
Note that the net reaction vector corresponding to the red vertex points strictly in the interior of $\New (G)$.
However, there is no weakly reversible E-graph whose dynamics can include the dynamics of this network, since there is no path that can return to the red vertex.}    
\label{fig:3dexample}
\end{figure}

\begin{theorem}
\label{thm:suff_strong_endo}

Let $G = (V, E)$ be an E-graph that has $\ell$ linkage classes, denoted by $L_1, \ldots, L_{\ell}$, and $p$ terminal strongly connected components, denoted by $T_{1}, \ldots, T_{p}$.
For every $\bk \in \mathbb{R}^{|E|}_{>0}$, every terminal strongly connected component $T_i$ contains a vertex whose net reaction vector points strictly in the interior of $\New (L_j)$ with $T_i \subset L_j$.
Then there exists a weakly reversible E-graph $G'$ such that $G \sqsubseteq G'$.

\end{theorem}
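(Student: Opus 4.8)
The plan is to first convert the statement into a purely geometric, $\bk$-independent condition on cones, and then realize each linkage class separately by a strongly connected graph built from the boundary of its Newton polytope together with one ``return'' edge per terminal component. First I would record the equivalence underlying $\sqsubseteq$: since the monomials $\bx^{\by}$ attached to distinct source vertices are linearly independent, $(G',\bk')$ realizes $(G,\bk)$ exactly when the net reaction vectors agree at every point, and for a fixed weakly reversible $G'$ this is feasible with strictly positive rates iff at each vertex $\by$ one has $\bw_{\by}\in\op{relint}\op{Cone}_{G'}(\by)$. As $\bk$ ranges over $\R^{|E|}_{>0}$ the vector $\bw_{\by}$ sweeps out exactly $\op{relint}\op{Cone}_{G}(\by)$, so the requirement ``for all $\bk$'' becomes the single structural condition
\[
\op{relint}\op{Cone}_{G}(\by)\subseteq\op{relint}\op{Cone}_{G'}(\by)\quad\text{for every }\by,
\]
together with weak reversibility of $G'$. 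Because the out-edges of different vertices are disjoint, the hypothesis itself simplifies the same way: ``for every $\bk$, some vertex of $T_i$ is interior-pointing'' is equivalent (choosing each vertex's rates independently) to the existence of a \emph{fixed} hub $\by_i^{*}\in T_i$ with $\op{relint}\op{Cone}_{G}(\by_i^{*})\subseteq\op{int}\op{Cone}\{\,\by'-\by_i^{*}\mid\by'\in\New(L_j)\,\}$, i.e.\ \emph{every} net reaction vector of $\by_i^{*}$ points strictly into $\New(L_j)$. This already disposes of the awkward dependence of the distinguished vertex on $\bk$.

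Because linkage classes are vertex-disjoint and a disjoint union of strongly connected graphs is weakly reversible, I would reduce to a single linkage class $L=L_j$ with polytope $P=\New(L)$ and construct a strongly connected $G'_j$ on the source vertices of $L$. The construction specializes Theorems~\ref{thm:our_theorem} and~\ref{thm:two_strong_endo_weak}: build a weakly reversible cycle through the boundary vertices of $P$ as in Theorem~\ref{thm:our_theorem}; at each non-hub vertex keep out-directions so that $\op{Cone}_{G'_j}(\by)=\op{Cone}_{G}(\by)$, making the containment above trivial; and at each hub $\by_i^{*}$ install out-edges to all corners of $P$, so that $\op{Cone}_{G'_j}(\by_i^{*})$ is the full cone of directions from $\by_i^{*}$ into $P$. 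Since the hub's cone lies in the \emph{interior} of that cone, the containment at $\by_i^{*}$ holds, and—crucially—the hub now carries an edge pointing ``back'' toward an upstream corner. This return edge is precisely the slack manufactured in the decomposition $\bw_{\by_0}=\tilde\bw_{\by_0}+k(\by^{*}-\by_0)$ of Theorem~\ref{thm:two_strong_endo_weak}, now installed inside every terminal component.

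The main obstacle is the connectivity verification, i.e.\ showing $G'_j$ is strongly connected. I would argue in two directions. Downstream reachability is inherited from $G$ since non-hub out-edges are retained, so every vertex reaches some terminal component and, by its internal strong connectivity, its hub $\by_i^{*}$; upstream reachability is supplied by the hub's edges to the boundary cycle of $P$, which is itself strongly connected and from which the remaining vertices are reachable. Splicing a cycle through each hub then collapses all condensation-DAG components into a single strongly connected graph. This is exactly where the hypothesis is indispensable and where higher dimension bites: a terminal component lacking an interior-pointing vertex cannot acquire a positively-weighted return edge valid for all $\bk$, and Figure~\ref{fig:3dexample} shows a strongly endotactic 3D network with an interior-pointing vertex that still cannot be returned to, precisely because such a vertex is not supplied inside a terminal component as our hypothesis requires. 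Demanding one interior-pointing vertex in \emph{each} terminal component—one slack source per sink of the DAG—is what guarantees every sink can be rewired into a cycle.

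Finally I would assemble $G'=\bigcup_j G'_j$, check that its vertices (all drawn from the $L_j$) remain disjoint across $j$ so that the union is weakly reversible, and invoke the cone-containment criterion of the first paragraph to conclude that for every $\bk$ there exist strictly positive rates on $G'$ reproducing the dynamics of $(G,\bk)$, i.e.\ $G\sqsubseteq G'$. The bulk of the remaining work is bookkeeping: verifying the containment at the non-hub vertices once their out-directions are re-expressed through source vertices of $L$, and confirming that the boundary construction of Theorem~\ref{thm:our_theorem} meshes with the interior hubs without introducing vertices shared between different linkage classes.
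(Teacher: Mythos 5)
Your first two paragraphs are sound, and in one respect they improve on the paper: the reduction of $G \sqsubseteq G'$ to the vertex-wise condition $\op{relint}\op{Cone}_{G}(\by) \subseteq \op{relint}\op{Cone}_{G'}(\by)$ is the right criterion, and your uniformization of the hypothesis --- replacing ``for every $\bk$ some vertex of $T_i$ is interior-pointing'' by a \emph{fixed} hub $\by_i^{*}$, using the fact that distinct vertices have disjoint sets of out-edge rate coordinates --- repairs a genuine quantifier subtlety: the paper's own proof selects the distinguished vertex after fixing $\bk$, so the graph it constructs a priori depends on $\bk$, while the theorem demands a single $G'$.

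The construction itself, however, has genuine gaps. First, the ``weakly reversible cycle through the boundary vertices of $P$'' is the two-dimensional construction of Theorem~\ref{thm:our_theorem}: it depends on a clockwise ordering of the boundary source vertices of a polygon and has no analogue once $\dim \New(L_j) \geq 3$, which is exactly the regime this theorem is meant to cover (cf.\ Figure~\ref{fig:3dexample}). Second, the cycle is incompatible with your own realizability criterion: every edge of $G'$ must carry a strictly positive rate, so a cycle edge added at a \emph{non-hub} boundary vertex $\by$ enlarges $\op{Cone}_{G'}(\by)$ beyond $\op{Cone}_{G}(\by)$, and then $\op{relint}\op{Cone}_{G}(\by) \subseteq \op{relint}\op{Cone}_{G'}(\by)$ fails in general --- e.g.\ if $\by$ has a single out-edge in $G$ and the cycle edge at $\by$ is transversal to it, the net reaction vector of $\by$ lies on a proper face of the enlarged cone and cannot be written with both rates positive. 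You cannot simultaneously ``keep out-directions so that the cones agree'' at non-hub vertices and thread new cycle edges through them. Third, giving each hub out-edges only to the \emph{corners} of $P$ does not yield weak reversibility: a source vertex with no in-edges in $G$ lying in the interior of $P$ (e.g.\ a center vertex feeding a reversible cycle on the corners of a square) receives no in-edge from the hubs, from the boundary cycle, or from the retained edges of $G$, so its out-edges lie on no cycle of $G'$.

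All three problems disappear with the paper's construction, which uses no boundary structure at all: keep $G'_j = L_j$ and, for each terminal component, give its hub edges to \emph{all} other vertices of $L_j$. Interior-pointing of $\bw_{\by_i^{*}}$ yields a strictly positive representation of $\bw_{\by_i^{*}}$ over the differences to all other vertices (so realizability at the hub holds for this larger out-edge set), no edges are added at any other vertex (so realizability there is trivial), and weak reversibility is immediate: along edges of $G$ every vertex reaches some terminal component, and every hub reaches every vertex of $L_j$ in one step. Your argument becomes correct if you replace ``hub $\to$ corners plus boundary cycle'' by ``hub $\to$ all vertices of the linkage class'' and delete the cycle entirely; combined with your uniformization step, that yields a complete proof.
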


\begin{proof}

If every linkage class in $G$ is weakly reversible, then $G$ is weakly reversible. Let $G' = G$ and we have $G \sqsubseteq G'$. Thus, we assume that $G$ is not weakly reversible, i.e., $G$ has at least one non-weakly reversible linkage class.
Without loss of generality, we assume the linkage class $L_1 \subset G$ is not weakly reversible. 

In what follows, we will describe the procedure for the linkage class $L_1$. A similar procedure will be performed on every non-weakly reversible linkage class.
We set the initial E-graph $G'_1 = L_1$ which will be updated in the following process.
Note that $L_1$ can be partitioned by disjoint strongly connected components as
\begin{equation} \notag
L_1 = S_1 \cup S_2 \cup \cdots \cup S_{k_1}.
\end{equation}
Since $L_1$ is not weakly reversible, it must contain at least one terminal strongly connected component. Hence, we further assume $\{ S_1 = T_1, \cdots, S_i = T_i \}$ with $1 \leq i \leq k_1$ are terminal strongly connected components in $L_1$.

Given any $\bk \in \mathbb{R}^{|E|}_{>0}$, consider a terminal strongly connected component $T_1 \subset L_1$. From the assumption, there exists a vertex $\by \in T_1$ whose net reaction vector $\bw_{\by}$ points strictly in the interior of $\New (L_1)$.
Let $\{ \by_1, \ldots, \by_m \}$ denote the vertices in $L_1$, the assumption on $\bw_{\by}$ implies that there exists positive constants $\alpha_{1}, \ldots ,\alpha_{m}$, such that
\begin{equation} \notag
\bw_{\by} = \alpha_{1} (\by_{1} - \by) + \cdots + \alpha_{m} (\by_{m} - \by),
\end{equation}
where $\alpha_{1}, \ldots, \alpha_{m}$ depend on $\bk$.
Thus, the net reaction vector $\bw_{\by}$ can be realized as follows:
\begin{equation} \notag
\by \xrightarrow{\alpha_{j}} \by_{j}
\ \text{ for } \
j = 1, \ldots, m.
\end{equation}
We update $G'_1$ by adding edges from $\by$ to all other vertices in $G'_1$.
The above shows that $\bw_{\by}$ can be realized in the updated $G'$.

Analogously, we can do the same operation through the rest of the terminal strongly connected components $\{ T_2, \ldots, T_i \}$ in $L_1$.
Specifically, every terminal strongly connected component contains a vertex $\tilde{\by}$, such that its net reaction vector $\bw_{\tilde{\by}}$ satisfies
\begin{equation} \notag
\bw_{\tilde{\by}} = \tilde{\alpha}_{1} (\by_{1} - \tilde{\by}) + \cdots + \tilde{\alpha}_{m} (\by_{m} - \tilde{\by})
\ \text{ with } \
\tilde{\alpha}_{1}, \ldots \tilde{\alpha}_{m} > 0.
\end{equation}
Hence, $\bw_{\tilde{\by}}$ can be realized as follows:
\begin{equation} \notag
\tilde{\by} \xrightarrow{\tilde{\alpha}_{j}} \by_{j}
\ \text{ for } \
j = 1, \ldots, m.
\end{equation}
At every step when we work on a terminal strongly connected component, we update $G'_1$ by adding edges from such vertex $\tilde{\by}$ to all other vertices in $G'_1$.
From the construction, we deduce that 
\[
L_1 \sqsubseteq G'_1.
\]

It remains to prove that $G'_1$ is weakly reversible.
Note that $\{T_1, \ldots, T_i\}$ are terminal strongly connected components in $L_1$.
Thus, there is a path from every non-terminal strongly connected component to one of $\{T_1, \ldots, T_i\}$.
Furthermore, from the construction every component in $\{T_1, \ldots, T_i\}$ has direct edges to all other strongly connected components in $G'$, that is, for $1 \leq j \leq i$
\begin{equation} \notag
T_j \to S_l
\ \text{ for } \
1 \leq l \leq k_1, \ l \neq j.
\end{equation}
Therefore, $G'_1$ is weakly reversible and we prove this theorem.
\end{proof}

\begin{lemma}
\label{lem:hig_imply_endp}

Let $(G, \bk)$ be a mass-action system with $\ell$ linkage classes, denoted by $L_1, \ldots$, $L_{\ell}$, and $p$ terminal strongly connected components, denoted by $T_{1}, T_{2}, \ldots, T_{p}$.
Every terminal strongly connected component $T_i$ contains a vertex whose net reaction vector points strictly in the interior of $\New (L_j)$ with $T_i \subset L_j$. Then $G$ is endotactic.
\end{lemma}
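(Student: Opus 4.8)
The plan is to prove this by contradiction, manufacturing from a failure of endotacticity a terminal strongly connected component whose net reaction vector (for the given $\bk$) is forced to lie in a hyperplane on which it cannot point into the relevant Newton polytope. So suppose $G$ is \emph{not} endotactic. By Definition~\ref{def:endotactic} there exist $\bv\in\R^n$ and a reaction $\by_0\to\by_0'\in E$ with $\bv\cdot(\by_0'-\by_0)<0$ such that every reaction $\tilde{\by}\to\tilde{\by}'\in E$ with $\bv\cdot\tilde{\by}<\bv\cdot\by_0$ satisfies $\bv\cdot(\tilde{\by}'-\tilde{\by})\le 0$. Write $L=\bv\cdot\by_0$, so that $\bv\cdot\by_0'<L$. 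Let $L_j$ be the linkage class containing $\by_0$ (and hence $\by_0'$, since reactions never leave a linkage class).

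The first step is to localize inside $L_j$ and extract a terminal component at the bottom. Because every reaction whose source has $\bv$-value below $L$ is flat or decreasing, the set $B=\{\by\in L_j:\bv\cdot\by<L\}$ is \emph{absorbing} (every out-edge of a vertex of $B$ lands back in $B$) and is nonempty since $\by_0'\in B$. Put $m=\min_{\by\in B}\bv\cdot\by$ and $B_0=\{\by\in B:\bv\cdot\by=m\}$. A reaction out of $B_0$ cannot strictly decrease $\bv$ (its target would lie in $L_j$ at level $<m$, contradicting minimality of $m$), so it must be flat; hence $B_0$ is again absorbing. A nonempty finite absorbing set contains a terminal strongly connected component of $G$, so there is a terminal component $T$ with $T\subseteq B_0\subseteq L_j$, all of whose vertices lie on the level set $\{\bv\cdot\bx=m\}$.

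The crux is that every reaction inside $T$ is flat. Since $T$ is terminal, each reaction $\by^*\to\by'$ with $\by^*\in T$ has $\by'\in T$, and both lie at level $m$, so $\bv\cdot(\by'-\by^*)=0$. By hypothesis $T$ contains a vertex $\by^*$ whose net reaction vector $\bw_{\by^*}=\sum_{\by^*\to\by'}k_{\by^*\to\by'}(\by'-\by^*)$ points strictly into the (relative) interior of $\New(L_j)$; but the previous observation forces $\bv\cdot\bw_{\by^*}=0$, i.e.\ $\by^*+\varepsilon\bw_{\by^*}$ remains at level $m$ for all small $\varepsilon>0$. Now no source vertex of $L_j$ can have $\bv$-value below $m$ (it would lie in $B$ and violate minimality), so $m=\min_{\bx\in\New(L_j)}\bv\cdot\bx$ and $\by^*$ attains it; moreover $\by_0\in L_j$ has $\bv\cdot\by_0=L>m$, so $\bv$ is non-constant on $\New(L_j)$. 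Consequently every point of the relative interior of $\New(L_j)$ has $\bv$-value strictly greater than $m$, contradicting $\bv\cdot(\by^*+\varepsilon\bw_{\by^*})=m$. (The degenerate case $T=\{\by^*\}$ is immediate: a singleton terminal component is a sink, so $\bw_{\by^*}=\mathbf{0}$, which cannot point strictly into the interior of a polytope.) Hence no such $\bv$ and reaction exist, and $G$ is endotactic.

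The step I expect to be the main obstacle is the extraction of a terminal strongly connected component sitting \emph{exactly on the $\bv$-minimal face} of $\New(L_j)$: this is precisely what converts the flatness forced by the ``no up-reaction below'' assumption into a genuine obstruction to the net reaction vector pointing inward. Restricting to the linkage class $L_j$ of the offending reaction is essential here, since it is what guarantees that $\bv$ is non-degenerate on $\New(L_j)$ (that class contains both $\by_0$ at level $L$ and $T$ at the strictly lower level $m$). If one instead passed to a globally $\bv$-minimal component, one could land in a linkage class on which $\bv$ is constant, where the flat net vector lies harmlessly in the relative interior and no contradiction arises; care in this localization, together with the singleton/sink bookkeeping, is the delicate part of the argument.
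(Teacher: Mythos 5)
Your proof is correct, and at the level of overall strategy it mirrors the paper's: argue by contradiction, use the failure witness $(\bv,\ \by_0\to\by_0')$ to show that the strict sublevel set $B=\{\by\in L_j : \bv\cdot\by<\bv\cdot\by_0\}$ of the ambient linkage class is absorbing, extract a terminal strongly connected component inside it, and play the resulting $\bv$-flatness of that component against the hypothesis that one of its vertices has a net reaction vector pointing strictly into $\New(L_j)$. The substantive difference is your second refinement step: passing from $B$ to its $\bv$-minimal level set $B_0$ and extracting the terminal component there, so that the component sits exactly on the $\bv$-minimal face of $\New(L_j)$. The paper does not do this: it takes an arbitrary terminal component $T_i\subset V_{\by}$ (its name for your $B$), notes that $\bv\cdot\bw_{\tilde\by}\le 0$ for every $\tilde\by\in T_i$, and asserts that this already contradicts the inward-pointing hypothesis. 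That last inference is not valid without your refinement: a terminal component lying in $B$ but strictly above the $\bv$-minimal face can carry a flat net vector ($\bv\cdot\bw=0$) that does point strictly into the interior of $\New(L_j)$. For instance, take $\bv=(0,1)$ and reactions $(0,3)\to(0,2)\to(0,1)$, $(0,1)\rightleftarrows(1,1)$, $(0,2)\to(2,0)$, $(2,0)\rightleftarrows(3,0)$: endotacticity fails with witness $(0,3)\to(0,2)$, the terminal component $\{(0,1),(1,1)\}$ lies in $B$, and the net vector at $(0,1)$ is a positive multiple of $(1,0)$, which satisfies $\bv\cdot\bw=0$ yet points strictly into the interior of $\New(L_1)=\mathrm{conv}\{(0,3),(0,1),(2,0),(3,0)\}$. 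No contradiction can be extracted from that component; it appears only at the bottom component $\{(2,0),(3,0)\}$, which is precisely the one your construction ($B_0$ together with the minimal-face argument) is guaranteed to find. So your proof does not merely reproduce the paper's argument: it repairs the step where the paper's write-up has a genuine leap, and the caution you flag about localizing to the $\bv$-minimal face is exactly the right point.

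One minor remark: your dismissal of the singleton case (``a zero vector cannot point strictly into the interior'') rests on a convention. Even under the permissive reading, where a zero net vector at $\by^*$ points into the interior iff $\by^*\in\mathrm{relint}(\New(L_j))$, your level argument still closes it: $\by^*$ sits at level $m$, while every point of $\mathrm{relint}(\New(L_j))$ has $\bv$-value strictly greater than $m$ (and if $\bv$ is constant on $\New(L_j)$, the whole polytope lies at level $L>m$), so $\by^*\notin\mathrm{relint}(\New(L_j))$ in any case.
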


\begin{proof}

We prove this lemma by contradiction. Assume that $G$ is not endotactic. 
From Definition \ref{def:endotactic}, there exists a reaction $\by \to \by' \in E$ and $\bv \in \mathbb{R}^n$ such that 
\begin{equation} \label{eq:v*y'-y<0_3}
\bv \cdot (\by'-\by) < 0.
\end{equation}
Further, for every $\tilde{\by} \to \tilde{\by}' \in E$, it satisfies 
\begin{equation} \label{eq:v*ty>=v*y_3}
\bv \cdot (\tilde{\by}'- \tilde{\by}) \leq 0
\ \text{ or } \
\bv \cdot \tilde{\by} \geq \bv \cdot \by.
\end{equation}

Suppose the vertex $\by \in L_1$, then we define 
\[
V_{\by} = \{ \tilde{\by} \in L_1 \ | \ \bv \cdot \tilde{\by} < \bv \cdot \by \}.
\]
From \eqref{eq:v*y'-y<0_3} and $\by \to \by' \in E$, we get $\by' \in V_{\by}$ and $\by \in L_1 \backslash V_{\by}$. This implies that
\[
V_{\by}  \neq \emptyset, \ \
L_1 \backslash V_{\by} \neq \emptyset.
\]
Moreover, \eqref{eq:v*ty>=v*y_3} shows that every vertex $\tilde{\by} \in V_{\by}$ satisfies that
\begin{equation} \label{eq:v*ty>=v*y_4}
\bv \cdot (\tilde{\by}'- \tilde{\by}) \leq 0
\ \text{ for any } \
\tilde{\by} \to \tilde{\by}' \in E.
\end{equation}

Hence, there is no reaction from the vertices in $V_{\by}$ to the vertices in $L_1 \backslash V_{\by}$. This indicates that there exists a terminal strongly connected component $T_i \subset V_{\by}$.
Consider any vertex $\tilde{\by} \in T_i$, from \eqref{eq:v*ty>=v*y_4} its net reaction vectors satisfies $\bv \cdot \bw_{\tilde{\by}} \leq 0$.
Recall that $L_1 \backslash V_{\by} \neq \emptyset$, this contradicts the assumption that $T_i$ contains a vertex whose net reaction vector points strictly in the interior of $\New (L_1)$.
Therefore, we conclude that $G$ is endotactic.
\end{proof}

\section{Toric Locus, Disguised Toric Locus, and Globally Attracting Locus}
\label{sec:toric_global_locus}

In Sections \ref{sec:strongly_endo_two} and \ref{sec:general_strong_endo}, we identified two types of E-graphs whose dynamics are included in those of weakly reversible E-graphs.
Several studies have demonstrated that weakly reversible E-graphs exhibit favorable dynamical properties, including persistence and permanence in low dimensions, and the existence of a steady state~\cite{boros2019existence,kothari2024endotactic}.

Building on this, we shift our focus to exploring whether the E-graphs analyzed in Section \ref{sec:sufficient} can admit a global attractor.
To this end, we introduce the concept of the \emph{globally attracting locus} corresponding to an E-graph.

\begin{definition} 

Consider an E-graph $G =(V, E)$. The \defi{globally attracting locus} of $G$ is defined as follows:
\begin{equation} \notag
\begin{split}
\gK(G)  := \big\{ \bk \in \mathbb{R}^{|E|}_{>0} \ \big| \ 
& (G, \bk) \text{ has a globally attracting steady state} 
\\& \ \ \text{within each stoichiometric compatibility class} \big\}.
\end{split}
\end{equation}
\end{definition}

Inspired by the global attractor conjecture, we are motivated to study the \emph{toric locus} and the \emph{disguised toric locus} corresponding to an E-graph.

\begin{definition} 

Consider an E-graph $G=(V, E)$.
\begin{enumerate}[label=(\roman*)]

\item The \defi{toric locus} on $G$ (denoted by $\mK (G)$) is defined as follows:
\begin{equation}\notag
\mK (G) := \{ \bk \in \mathbb{R}^{|E|}_{>0} \ \big| \ (G, \bk) \ \text{is a complex-balanced system} \}.
\end{equation}

\item Consider a dynamical system of the form 
\begin{equation} \notag
 \frac{d\bx}{dt} 
= \bof (\bx).
\end{equation}
This dynamical system is said to be \defi{disguised toric} on $G$
if it has a realization $(G, \bk)$ with $\bk \in \mK (G)$, that is, it has a \defi{complex-balanced realization} $(G, \bk)$.
\end{enumerate}
\end{definition}

We now introduce the notion of disguised toric locus~\cite{craciun2024connectivity,i2022disguised}.

\begin{definition} 
\label{def:de_realizable}

Consider two E-graphs $G =(V,E)$ and $G' =(V', E')$.
\begin{enumerate}[label=(\roman*)]
\item The set $\dK(G, G')$ is defined as follows:
\begin{equation} \notag
\dK(G, G') := \{ \bk \in \mathbb{R}^{|E|}_{>0} \ \big| \ (G, \bk) \ \text{is disguised toric on } G' \}.
\end{equation}

\item The \defi{disguised toric locus} of $G$ is defined  as follows:
\begin{equation} \notag
\dK(G) = \bigcup_{G' \subseteq_{wr} G_{c}} \ \dK(G, G').
\end{equation}
\end{enumerate}
\end{definition}

Based on the above definitions, the disguised toric locus represents a set of reaction rate vectors for which the mass-action system is dynamically equivalent to a complex-balanced system.
Since \cite{siegel2000global} has demonstrated that persistent complex-balanced systems possess a global attractor, it is logical to investigate the connection between the disguised toric locus and the globally attracting locus.
Before that, it is essential to understand the disguised toric locus of a network or, at the very least, verify that it is non-empty.

From Definition \ref{def:de_realizable}, in order to determine whether $\dK(G)\neq \emptyset$ it suffices to check if $\dK(G, G') \neq \emptyset$ for some weakly reversible graph $G' \subseteq G_{c}$.
This can be accomplished by solving the linear programming problem outlined below (see Linear program $(P2)$ for details).
To proceed, we first review a few definitions from \cite{craciun2023lower,craciun2020efficient}.

\begin{definition}[\cite{craciun2023lower}]
\label{def:flux_equiv}

Consider an E-graph $G=(V, E)$.

\begin{enumerate}[label=(\roman*)]
\item The \defi{flux vector} of $G$ is given by
\[
\bJ = (J_{\by \to \by'})_{\by \to \by' \in E} \in \mathbb{R}_{>0}^{|E|},
\]
where $J_{\by \to \by'} > 0$ is called the \defi{flux} of the reaction $\by \to \by'\in E$.
The \defi{associated flux system} generated by $(G, \bJ)$ is given by
\begin{equation} \notag
\frac{d\bx}{dt} = \sum_{\by \to \by' \in E} J_{\by \to \by'} (\by' - \by).
\end{equation}

\item Consider two flux systems $(G,\bJ)$ and $(G', \bJ')$. Then $(G,\bJ)$ and $(G', \bJ')$ are said to be \defi{flux equivalent} if for every vertex $\by_0\in V \cup V'$,
\begin{equation} \notag
\displaystyle \sum_{\by_0 \to \by_j \in E} J_{\by_0 \to \by_j} (\by_j - \by_0) 
 = \sum_{\by_0 \to \by'_j \in E'} J'_{\by_0 \to \by'_j} (\by'_j - \by_0).
\end{equation}
Further, let $(G, \bJ) \sim (G', \bJ')$ denote two flux systems $(G, \bJ)$ and $(G', \bJ')$ are flux equivalent. 
\end{enumerate}
\end{definition}

\begin{proposition}[\cite{craciun2020efficient}]
\label{prop:craciun2020efficient}

Consider two mass-action systems $(G, \bk)$ and $(G', \bk')$. Let $\bx \in \mathbb{R}_{>0}^n$, define the flux vector $\bJ (\bx) = (J_{\by \to \by'})_{\by \to \by' \in E}$ on $G$, such that for every $\by \to \by' \in E$,
\begin{equation} \notag
J_{\by \to \by'} = k_{\by \to \by'} \bx^{\by}.
\end{equation}
Further, define the flux vector $\bJ' (\bx) = (J'_{\by \to \by'})_{\by \to \by' \in E'}$ on $G'$, such that for every $\by \to \by' \in E'$,
\begin{equation} \notag
J'_{\by \to \by'} = k'_{\by \to \by'} \bx^{\by}.
\end{equation} 
Then the flux systems $(G, \bJ(\bx)) \sim (G', \bJ'(\bx))$ for some $\bx \in \mathbb{R}_{>0}^n$ if and only if the mass-action systems $(G, \bk) \sim (G', \bk')$.
\end{proposition}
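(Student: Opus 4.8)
The plan is to prove the biconditional by directly transporting dynamical equivalence of mass-action systems to flux equivalence of the associated flux systems, and vice versa. The key observation is that both notions are expressed as equalities of the same vector-valued sums, and the substitution $J_{\by \to \by'} = k_{\by \to \by'} \bx^{\by}$ converts one into the other pointwise at each fixed $\bx$. First I would recall that $(G, \bk) \sim (G', \bk')$ means that for every $\bx \in \mathbb{R}^n_{>0}$,
\[
\sum_{\by \to \by' \in E} k_{\by \to \by'} \bx^{\by} (\by' - \by)
= \sum_{\by' \to \tilde\by' \in E'} k'_{\by' \to \tilde\by'} \bx^{\by'} (\tilde\by' - \by'),
\]
while flux equivalence $(G, \bJ(\bx)) \sim (G', \bJ'(\bx))$ is the vertex-by-vertex condition in Definition~\ref{def:flux_equiv}(ii). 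So the two statements differ in how the terms are grouped: dynamical equivalence is a single aggregate identity, whereas flux equivalence is organized by source vertex $\by_0 \in V \cup V'$.

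For the direction ($\Leftarrow$), assume $(G, \bk) \sim (G', \bk')$. Fixing an arbitrary $\bx \in \mathbb{R}^n_{>0}$ and substituting $J_{\by \to \by'}(\bx) = k_{\by \to \by'} \bx^{\by}$ and $J'_{\by \to \by'}(\bx) = k'_{\by \to \by'} \bx^{\by}$, the dynamical-equivalence identity becomes
\[
\sum_{\by \to \by' \in E} J_{\by \to \by'}(\bx) (\by' - \by)
= \sum_{\by' \to \tilde\by' \in E'} J'_{\by' \to \tilde\by'}(\bx) (\tilde\by' - \by'),
\]
which is the aggregate (summed over all source vertices) form of flux equivalence. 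To recover the per-vertex condition, I would group both sides by source vertex; since each monomial $\bx^{\by_0}$ is associated with a distinct source vertex $\by_0$, the terms attached to different source vertices live in different monomial classes and cannot cancel against one another. Hence the aggregate identity forces the per-vertex identities, giving flux equivalence at this $\bx$.

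For the direction ($\Rightarrow$), assume the flux systems are equivalent for some $\bx_0 \in \mathbb{R}^n_{>0}$. I would reverse the grouping: summing the per-vertex flux-equivalence identities over all source vertices $\by_0 \in V \cup V'$ yields the aggregate identity at $\bx_0$. The remaining task is to upgrade ``at $\bx_0$'' to ``for all $\bx$''. Here I would use that the reaction rate constants $k_{\by \to \by'}, k'_{\by \to \by'}$ are fixed positive numbers independent of $\bx$, so $k_{\by \to \by'} \bx_0^{\by}$ determines $k_{\by \to \by'}$ (since $\bx_0^{\by} > 0$ is a known positive scalar), and the same holds on the $G'$ side. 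Thus flux equivalence at a single $\bx_0$ pins down a relation among the rate constants alone, which then propagates to every $\bx$ by re-inserting the monomials $\bx^{\by}$.

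The main obstacle I anticipate is the grouping-by-source-vertex step, specifically justifying that terms with different source vertices do not interfere. This rests on the linear independence (over the appropriate domain) of the monomial functions $\bx \mapsto \bx^{\by}$ for distinct exponent vectors $\by$, together with the observation that a single source vertex may carry multiple outgoing reactions whose reaction vectors must be pooled before comparison across the two graphs. Care is needed when a vertex $\by_0$ is a source in one graph but not the other, in which case its contribution is the zero vector on the absent side; the definition's quantifier over $\by_0 \in V \cup V'$ handles this, but I would make the bookkeeping explicit. Since this proposition is attributed to \cite{craciun2020efficient}, I expect the argument to be essentially a matching of definitions through the substitution, with the monomial-independence argument being the one genuinely substantive ingredient.
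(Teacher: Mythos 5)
The paper itself gives no proof of this proposition --- it is imported verbatim from \cite{craciun2020efficient} and used as a black box --- so your proposal can only be measured against the standard definitional argument, which is indeed the route you take. You correctly isolate the two substantive ingredients: positivity of the monomials (dividing a per-vertex flux identity by $\bx_0^{\by_0}>0$ yields an $\bx$-free relation among the rate constants, which can then be re-multiplied by $\bx^{\by_0}$ for arbitrary $\bx$), and linear independence of the monomial functions $\bx \mapsto \bx^{\by}$ for distinct exponent vectors.

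There is, however, one step that is false as written. In your ($\Leftarrow$) direction you \emph{first} fix $\bx$, obtain the single numerical vector identity, and \emph{then} claim you may group by source vertex because terms attached to different source vertices ``live in different monomial classes and cannot cancel against one another.'' At a fixed $\bx$ the monomials are just positive scalars and cancellation across vertices is entirely possible: in one dimension with $\by_1 = 1$, $\by_2 = 2$ and $\bx = 1$, both monomials equal $1$, and an aggregate identity at that point carries no per-vertex information. The quantifiers must go the other way: dynamical equivalence is an identity of \emph{functions} on $\mathbb{R}^n_{>0}$, so linear independence of the monomial functions forces equality of the vector coefficients attached to each monomial, i.e.\ the $\bx$-free identities
\[
\sum_{\by_0 \to \by_j \in E} k_{\by_0 \to \by_j} (\by_j - \by_0) \;=\; \sum_{\by_0 \to \by'_j \in E'} k'_{\by_0 \to \by'_j} (\by'_j - \by_0)
\quad \text{for every } \by_0 \in V \cup V',
\]
and only \emph{then} do you multiply by $\bx^{\by_0}$ at your chosen $\bx$ to get flux equivalence there. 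Since your closing paragraph already names monomial independence as the key ingredient, the repair is immediate, but as sequenced the step would not survive refereeing. A smaller remark on ($\Rightarrow$): summing the per-vertex flux identities to form the aggregate identity at $\bx_0$ is a detour that loses exactly the information you need (an aggregate identity at one point cannot be upgraded to all $\bx$); what propagates is the family of per-vertex identities divided by $\bx_0^{\by_0}$, which is what your ``pins down a relation among the rate constants'' sentence amounts to --- state that directly and drop the summation step.
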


We now present the linear program designed to determine whether $\dK(G) \neq \emptyset$. 

\medskip

\textbf{Linear program (P2):}
Given an E-graph $G$, consider its complete graph $G_c = (V, E_c)$. Find $\bJ = (J_{\by \to \by'})_{\by \to \by' \in E} \in \mathbb{R}^{|E|}_{>0}$ and $\bJ' = (J'_{\by \to \by'})_{\by \to \by' \in E_c} \in \mathbb{R}^{|E_c|}_{\geq 0}$ satisfying for every $ \by_0 \in V$,
\begin{align}
\displaystyle \sum_{\by_0 \to \by_i \in E} J_{\by_0 \to \by_i} (\by_i - \by_0) 
& = \sum_{\by_0 \to \by_j \in E_c} J'_{\by_0 \to \by_j} (\by_j - \by_0),
\label{eq:flux_equivalence} \\
\sum_{\by_0 \to \by \in E_c} J'_{\by_0 \to \by} 
& = \displaystyle\sum_{\by' \to \by_0 \in E_c} J'_{\by' \to \by_0}.
\label{eq:complex_balance_flux}
\end{align}

\medskip

\begin{lemma}
\label{lem:linear_program}

Let $G$ be an E-graph.
If the linear program $(P2)$ has a solution, then there exists a weakly reversible realization E-graph $G' \subseteq_{wr} G_c$ such that $\dK(G, G') \neq \emptyset$. Otherwise, if the linear program $(P2)$ has no solution, then $\dK(G)= \emptyset$.
\end{lemma}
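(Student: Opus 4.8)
The plan is to treat the two implications as the two directions of the equivalence ``$(P2)$ is feasible $\iff \dK(G)\neq\emptyset$'', using Proposition~\ref{prop:craciun2020efficient} as the bridge between the flux picture in which $(P2)$ is phrased and the mass-action picture in which $\dK$ is defined. The key observation I would isolate first is that the two constraints of $(P2)$ are exactly the flux-level shadow of ``dynamically equivalent to a complex-balanced system'': constraint~\eqref{eq:flux_equivalence} is flux equivalence of $(G,\bJ)$ with $(G_c,\bJ')$ in the sense of Definition~\ref{def:flux_equiv}, and constraint~\eqref{eq:complex_balance_flux} is precisely the complex-balance condition of Definition~\ref{def:cb} evaluated at the point $\bx_0=\mathbf{1}$, once one records the dictionary $J_{\by\to\by'}=k_{\by\to\by'}\bx^{\by}$ from Proposition~\ref{prop:craciun2020efficient}.

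For the forward implication, suppose $(P2)$ has a solution $(\bJ,\bJ')$. First I would let $G'=(V',E')$ be the \emph{support} of $\bJ'$, i.e. the subgraph of $G_c$ consisting of those edges $\by_0\to\by_j\in E_c$ with $J'_{\by_0\to\by_j}>0$, and set $\bx_0=\mathbf{1}$, $\bk=\bJ$, and $\bk'=\bJ'|_{E'}$. Since $\mathbf{1}^{\by}=1$ for all $\by$, constraint~\eqref{eq:complex_balance_flux} says exactly that $(G',\bk')$ is complex-balanced at $\mathbf{1}$, so $\bk'\in\mK(G')$; by Remark~\ref{rmk:cb_property} this forces $G'$ to be weakly reversible, hence $G'\subseteq_{wr}G_c$, and in particular $G'$ has no isolated vertices and is a legitimate E-graph. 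Next, because edges outside the support carry zero flux, constraint~\eqref{eq:flux_equivalence} is the flux equivalence $(G,\bJ(\mathbf{1}))\sim(G',\bJ'(\mathbf{1}))$; Proposition~\ref{prop:craciun2020efficient} then upgrades this to $(G,\bk)\sim(G',\bk')$. Thus $(G,\bk)$ is disguised toric on $G'$, so $\bk\in\dK(G,G')$ and $\dK(G,G')\neq\emptyset$.

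For the converse I would prove the contrapositive $\dK(G)\neq\emptyset\Rightarrow(P2)$ feasible. If $\dK(G)\neq\emptyset$ then there is some $G'\subseteq_{wr}G_c$ and $\bk\in\dK(G,G')$, so $(G,\bk)\sim(G',\bk')$ for a complex-balanced $\bk'\in\mK(G')$, say balanced at some $\bx_0\in\R^n_{>0}$. I would then define the candidate $(P2)$ solution by $J_{\by\to\by'}=k_{\by\to\by'}\bx_0^{\by}$ on $E$ (strictly positive) and $J'_{\by\to\by'}=k'_{\by\to\by'}\bx_0^{\by}$ for $\by\to\by'\in E'$ and $J'_{\by\to\by'}=0$ otherwise (nonnegative on $E_c$). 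Proposition~\ref{prop:craciun2020efficient} applied to the dynamical equivalence $(G,\bk)\sim(G',\bk')$ gives $(G,\bJ)\sim(G',\bJ')$ as flux systems, which is constraint~\eqref{eq:flux_equivalence} after noting $E'\subseteq E_c$; and the complex-balance identity of $(G',\bk')$ at $\bx_0$, read vertex-by-vertex, gives constraint~\eqref{eq:complex_balance_flux} (trivially satisfied at vertices of $V\setminus V'$, where both sides vanish). Hence $(\bJ,\bJ')$ is feasible for $(P2)$.

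The step I expect to be the main obstacle is the careful translation between the point-evaluated complex-balance condition, which genuinely involves the monomials $\bx_0^{\by}$, and the purely linear constraints of $(P2)$; the trick that makes it work is to fix the evaluation point ($\bx_0=\mathbf{1}$ going forward, and the complex-balancing point $\bx_0$ going backward) so that Proposition~\ref{prop:craciun2020efficient} can be applied at that single point. The other delicate point, establishing that the support graph $G'$ produced in the forward direction is weakly reversible rather than merely a subgraph of $G_c$, is resolved cleanly by first certifying $(G',\bk')$ as complex-balanced and then invoking Remark~\ref{rmk:cb_property}; I would make sure the bookkeeping that zero-flux edges drop out of both~\eqref{eq:flux_equivalence} and~\eqref{eq:complex_balance_flux} is stated explicitly, since it is what lets the $E_c$-sums be replaced by $E'$-sums.
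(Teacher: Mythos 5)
Your proposal is correct and follows essentially the same route as the paper's proof: both directions use the support of $\bJ'$ to define $G'$, invoke Proposition~\ref{prop:craciun2020efficient} at the evaluation point $\bx^*=(1,\ldots,1)$ (forward) or at the complex-balanced steady state $\bx^*$ (backward), and read constraint~\eqref{eq:complex_balance_flux} as the complex-balance condition of Definition~\ref{def:cb}. Your explicit appeal to Remark~\ref{rmk:cb_property} to certify that the support graph is weakly reversible, and your phrasing of the second part as a direct contrapositive rather than a contradiction, are only cosmetic refinements of the paper's argument.
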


\begin{proof}

Suppose the linear program $(P2)$ has a solution, there exist $\bJ \in \mathbb{R}^{|E|}_{>0}$ and $\bJ' \in \mathbb{R}^{|E_c|}_{\geq 0}$ satisfying \eqref{eq:flux_equivalence} and \eqref{eq:complex_balance_flux}.
By selecting the edges with positive entries in $\bJ'$, we obtain an E-graph $G' \subseteq G_c$. In other word, for every $\by \to \by' \in G'$, $J'_{\by \to \by'} > 0$.

Consider $\bk = (J_{\by \to \by'})_{\by \to \by' \in E}$ as a reaction rate vector of $G$ and $\bk' = (J'_{\by \to \by'})_{\by \to \by' \in G'}$ as a reaction rate vector of $G'$.
Applying Proposition \ref{prop:craciun2020efficient} with $\bx^* = (1, \ldots, 1 )^{\intercal}$, together with \eqref{eq:flux_equivalence}, we derive that $(G, \bk) \sim (G',\bk')$.
Furthermore, \eqref{eq:complex_balance_flux} and Definition \ref{def:cb} show that $(G', \bk')$ is a complex-balanced system with the complex-balanced steady state $\bx^*$.
Therefore, we conclude that $\dK(G, G') \neq \emptyset$ and thus $\dK(G)\neq \emptyset$.

Conversely, if the linear program $(P2)$ has no solution, we claim that $\dK(G)= \emptyset$.
Suppose not, there exists $G' \subseteq_{wr} G_c$ such that $\bk \in \dK(G, G') \neq \emptyset$. This implies the existence of $\bk' \in \mK (G')$ such that $(G, \bk) \sim (G',\bk')$.

Moreover, $(G', \bk')$ admits a complex-balanced steady state $\bx^*$. 
Consider the flux vector $\bJ = (k_{\by \to \by'} (\bx^*)^{\by})_{\by \to \by' \in E}$ on $G$ and the flux vector $\bJ' = (k'_{\by \to \by'} (\bx^*)^{\by})_{\by \to \by' \in E'}$ on $G'$.
From Proposition \ref{prop:craciun2020efficient}, together with Definition \ref{def:cb}, we derive that $(G, \bJ) \sim (G',\bJ')$ and thus \eqref{eq:flux_equivalence}, \eqref{eq:complex_balance_flux} hold for $\bJ, \bJ'$.
This contradicts the fact that the linear program $(P2)$ has no solution.
Therefore, we conclude $\dK(G)= \emptyset$.
\end{proof}

We are now ready to present the main result of this section, which pertains to networks closely related to those analyzed in Sections~\ref{sec:strongly_endo_two} and~\ref{sec:general_strong_endo}. 

\begin{theorem}
\label{thm:global_two_dim}

Let $G=(V, E)$ be an endotactic E-graph with a two-dimensional stoichiometric subspace. Assume that the linear program $(P2)$ has a solution, then $\gK (G) \neq \emptyset$.
\end{theorem}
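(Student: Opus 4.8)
The plan is to turn the solvability of the linear program $(P2)$ into the existence of a complex-balanced realization, to transport the persistence that $G$ enjoys as a two-dimensional endotactic network onto that realization, to invoke the global attractor theorem for persistent complex-balanced systems, and finally to check that the realization carries exactly the same stoichiometric compatibility classes as $G$, so that the resulting global attractor lands in $\gK(G)$.

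First I would apply Lemma \ref{lem:linear_program}: since $(P2)$ has a solution, there is a weakly reversible $G'\subseteq_{wr} G_c$ with $\dK(G,G')\neq\emptyset$. Fix $\bk\in\dK(G,G')$; by Definition \ref{def:de_realizable} there is $\bk'\in\mK(G')$ with $(G,\bk)\sim(G',\bk')$, so $(G',\bk')$ is a complex-balanced system generating the very same vector field $\bof$ as $(G,\bk)$. My goal is then to show that this particular $\bk$ lies in $\gK(G)$, which already forces $\gK(G)\neq\emptyset$.

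Next I would establish persistence and attraction. Because $G$ is endotactic with a two-dimensional stoichiometric subspace, $(G,\bk)$ is persistent by Pantea's theorem \cite{pantea2012global}. Persistence is a property of the trajectories, hence of the common field $\bof$; since $(G,\bk)\sim(G',\bk')$ share $\bof$, their solution curves coincide, so $(G',\bk')$ is persistent too. As $(G',\bk')$ is now a persistent complex-balanced system, the global attractor theorem for such systems \cite{siegel2000global} yields a globally attracting steady state within each class $(\bx_0+S_{G'})\cap\mathbb{R}^n_{>0}$.

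The hard part will be matching the two foliations, since $\gK(G)$ is defined through the classes $(\bx_0+S_{G})\cap\mathbb{R}^n_{>0}$ while the attraction above is stated through the classes of $G'$; I must therefore prove $S_G=S_{G'}$. One inclusion is routine: dynamical equivalence forces the net reaction vectors $\bw_{\by_0}$ of $G$ and $G'$ to agree at every vertex (exactly as in \eqref{eq:w_y0=w'_y0}), and since $G'$ is weakly reversible, $S_{G'}=\spn\{\bw_{\by_0}\}\subseteq S_G$ by the argument used in Lemma \ref{lem:newt_G_G'}. The substantive claim is the reverse, namely that for an endotactic graph the net reaction vectors already span all of $S_G$, and this is where I expect the only genuine difficulty. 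I would prove it by a level-set induction: if $\spn\{\bw_{\by_0}\}\subsetneq S_G$, pick $\bv$ with $\bv\cdot\bw_{\by_0}=0$ for every source $\by_0$ but $\bv\cdot(\by'-\by)\neq 0$ for some edge (such $\bv$ exists precisely because the inclusion is strict). Order the sources by $\bv\cdot\by_0$; at the minimal level no reaction can point strictly downhill in $\bv$, since Definition \ref{def:endotactic} would then supply a source at a strictly lower level, and because the net vector there is $\bv$-flat while every increment $\bv\cdot(\by'-\by_0)\ge 0$ has positive rate, all these increments vanish. Propagating this upward level by level shows $\bv\cdot(\by'-\by)=0$ for every edge, i.e. $\bv\perp S_G$, contradicting the choice of $\bv$. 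Hence $S_{G'}=\spn\{\bw_{\by_0}\}=S_G$, the two families of compatibility classes coincide, and the steady state produced by \cite{siegel2000global} is globally attracting within each $(\bx_0+S_G)\cap\mathbb{R}^n_{>0}$. Therefore $\bk\in\gK(G)$ and $\gK(G)\neq\emptyset$; the remaining steps are direct appeals to Lemma \ref{lem:linear_program}, \cite{pantea2012global}, and \cite{siegel2000global}.
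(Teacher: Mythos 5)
Your proposal is correct and follows the paper's overall architecture --- Lemma \ref{lem:linear_program} produces $G'\subseteq_{wr}G_c$ and a complex-balanced realization $(G',\bk')\sim(G,\bk)$, persistence is established, and the global attractor property of persistent complex-balanced systems \cite{siegel2000global} finishes --- but it differs in two places. On persistence, the paper never applies Pantea's endotactic theorem to $G$ itself: it notes that $(G',\bk')$, being complex-balanced, has bounded trajectories (Lyapunov function), shows $S_{G'}=\spn\{\bw'_{\by_0}\}\subseteq S_G$ so that $\dim S_{G'}\le 2$, and then cites the bounded-trajectory persistence result for weakly reversible networks \cite{pantea2012persistence}; your route (persistence of $(G,\bk)$ from endotacticity and $\dim S_G=2$ via \cite{pantea2012global}, transported across the shared vector field) is equally legitimate given how the paper itself states that result in Section \ref{sec:reaction_networks}, and if one insists on a bounded-trajectories hypothesis there, boundedness is free in your setup for the same Lyapunov reason. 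The more substantial difference is the step you call the hard part, and you are right that it is: $\gK(G)$ is defined through the classes $(\bx_0+S_G)\cap\mathbb{R}^n_{>0}$, whereas persistence plus complex balance yields attraction only within the classes of $S_{G'}$, so concluding $\bk\in\gK(G)$ genuinely requires $S_G=S_{G'}$. The paper's proof establishes only the inclusion $S_{G'}\subseteq S_G$ and passes over equality silently --- indeed it never invokes the endotacticity hypothesis at all --- so your level-set induction (no reaction from a lowest-$\bv$-level source can point strictly downhill, whence $\bv\cdot\bw_{\by_0}=0$, a sum of nonnegative terms with positive rates, forces all its reactions to be $\bv$-flat, and this propagates upward level by level) is precisely the missing ingredient, and it is correct. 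It is also exactly where endotacticity is indispensable: for the non-endotactic graph $G=\{X+Y\to X+2Y,\ X+Y\to X,\ \emptyset\to X,\ 2X\to X\}$ with unit rates, the program $(P2)$ is solvable via the complex-balanced realization $\emptyset\leftrightarrows X\leftrightarrows 2X$, yet the net reaction vectors span only a line, each compatibility class of $G$ then contains a continuum of equilibria, and $\gK(G)=\emptyset$. In short, your proof not only matches the paper's but closes a genuine gap in it.
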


\begin{proof}

Since the linear program $(P2)$ has a solution, there exists $G'\subseteq_{wr} G_{c}$ such that 
\[
\dK(G,G') \neq \emptyset.
\]
For any $\bk \in \dK(G,G')$, there exists $\bk' \in \mK(G')$ such that $(G, \bk) \sim (G', \bk')$. 
This shows that $(G', \bk')$ is complex-balanced and $(G, \bk)$ is disguised toric.

Since $G'$ is weakly reversible, \cite[Lemma 3.9]{craciun2021uniqueness} shows that for any $\bk' \in \mathbb{R}^{|E'|}_{>0}$,
\[
S_{G'} =
\spn \Big\{ \sum\limits_{\by_0 \to \by' \in E'} k'_{\by_0 \to \by'} (\by'- \by_0), 
\ \forall \by_0 \in V'
\Big\}.
\]
Together with $(G,\bk_0)\sim (G',\bk')$ and $\dim (S_{G}) = 2$, we derive that
\[
\dim (S_{G'}) \leq 2.
\]
Moreover, due to the existence of the Lyapunov function \cite{horn1972general}, all trajectories of $(G', \bk')$ are bounded and converge either to the boundary of the positive orthant or to the unique positive equilibrium within the corresponding stoichiometric compatibility classes.

On the other hand, \cite{pantea2012persistence} shows weakly reversible networks with one- or two-dimensional stoichiometric subspaces and bounded trajectories are persistent.
Consequently, $(G', \bk')$ is persistent and every trajectory converges to the unique equilibrium within each stoichiometric compatibility class. This indicates that $(G', \bk')$ has a globally attracting steady state within each stoichiometric compatibility class. Since $(G, \bk) \sim (G', \bk')$, we conclude that $\dK(G, G') \subseteq \gK(G)$ and thus $\gK(G) \neq \emptyset$.
\end{proof}



\begin{corollary}
\label{cor:global_strongly_endotactic}

Let $G=(V, E)$ be a strongly endotactic E-graph. Assume that the linear program $(P2)$ has a solution, then $\gK (G) \neq \emptyset$.
\end{corollary}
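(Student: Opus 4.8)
The plan is to follow the architecture of the proof of Theorem~\ref{thm:global_two_dim}, but to replace the low-dimensional persistence input (which there came from the two-dimensional stoichiometric subspace together with the Pantea-type result) by the permanence of strongly endotactic networks, which holds in arbitrary dimension. This is exactly what lets us drop any dimension hypothesis on $G$.

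First I would extract a complex-balanced realization from the linear program. Since $(P2)$ has a solution, Lemma~\ref{lem:linear_program} produces a weakly reversible E-graph $G' \subseteq_{wr} G_c$ with $\dK(G, G') \neq \emptyset$. Fixing any $\bk \in \dK(G, G')$, Definition~\ref{def:de_realizable} supplies a rate vector $\bk' \in \mK(G')$ with $(G, \bk) \sim (G', \bk')$; in particular $(G', \bk')$ is complex-balanced.

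Next I would inject strong endotacticity to obtain persistence. By the permanence theorem of Gopalkrishnan, Miller, and Shiu~\cite{gopalkrishnan2014geometric}, the strongly endotactic system $(G, \bk)$ is permanent, and hence persistent, for every positive rate vector. Because $(G, \bk) \sim (G', \bk')$ means the two systems define the very same vector field on $\mathbb{R}^n_{>0}$, their trajectories coincide, so $(G', \bk')$ is persistent as well. Finally, persistent complex-balanced systems admit a globally attracting steady state within each stoichiometric compatibility class~\cite{siegel2000global}; applying this to $(G', \bk')$ and transferring the conclusion back through the equivalence $(G, \bk) \sim (G', \bk')$ shows that the same steady state is globally attracting for $(G, \bk)$. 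Hence $\bk \in \gK(G)$, so $\gK(G) \neq \emptyset$.

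The individual steps are short, so the ``main obstacle'' is really the conceptual one of locating where to apply strong endotacticity: the dimension restriction in Theorem~\ref{thm:global_two_dim} was used solely to guarantee persistence of the weakly reversible realization, and strong endotacticity of $G$ supplies precisely that persistence --- uniformly over all rate constants, and therefore for the particular $\bk$ coming from $(P2)$ --- via~\cite{gopalkrishnan2014geometric}. The only point needing care is the transfer of persistence across dynamical equivalence, but this is immediate since dynamically equivalent systems generate identical ODEs and persistence is a property of the trajectories of the vector field alone.
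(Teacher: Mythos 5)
Your proposal is correct and follows essentially the same route as the paper's proof: both extract a weakly reversible complex-balanced realization $(G',\bk')$ from Lemma~\ref{lem:linear_program}, invoke the permanence of strongly endotactic networks from~\cite{gopalkrishnan2014geometric} for $(G,\bk)$, transfer it to $(G',\bk')$ via dynamical equivalence, and conclude that the persistent complex-balanced system has a globally attracting steady state in each stoichiometric compatibility class, so $\dK(G,G')\subseteq \gK(G)\neq\emptyset$. The only cosmetic difference is that you cite~\cite{siegel2000global} explicitly for the last step, whereas the paper phrases it via the Lyapunov-function dichotomy for complex-balanced systems; the content is identical.
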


\begin{proof}

Similar to the proof of Theorem \ref{thm:global_two_dim}, there exists $G'\subseteq_{wr} G_{c}$ such that $\dK(G,G') \neq \emptyset$.
For any $\bk \in \dK(G,G')$, there exists $\bk' \in \mK(G')$ such that $(G, \bk) \sim (G', \bk')$, where $(G', \bk')$ is complex-balanced and $(G, \bk)$ is  disguised toric. 
As a complex-balanced system, all trajectories of $(G', \bk')$ converge either to the boundary of the positive orthant or to the unique positive equilibrium within the corresponding stoichiometric compatibility classes.

On the other hand, $G$ is strongly endotactic. \cite{gopalkrishnan2014geometric} shows that the dynamics generated by $(G,\bk)$ is permanent. 
Since $(G, \bk) \sim (G', \bk')$, it follows that $(G', \bk')$ is also permanent
Therefore, $(G', \bk')$ has a globally attracting steady state within each stoichiometric compatibility class, implying that $\gK(G) \supseteq \dK(G, G') \neq \emptyset$.
\end{proof}

\section{Applications}
\label{sec:application_positive}

In this section, we present several real-life examples of networks inspired by chemical and biological applications. We demonstrate that the dynamics of these networks can be included in the dynamics of weakly reversible networks. Additionally, we explore the disguised toric locus and the globally stable locus for these networks.

\begin{example}[Thomas type models {\cite[Chapter 6]{murray2002introduction}}]
\label{ex:thomas}

Consider the network $G$ shown in Figure~\ref{fig:thomas}(a). This network is an example of the \emph{Thomas type models}, commonly used to model the oxidation of uric acid by oxygen in the presence of the enzyme uricase. In this context, the species $X$ and $Y$ represent uric acid and oxygen, respectively.

\begin{figure}[!ht]
\centering
\includegraphics[scale=0.65]{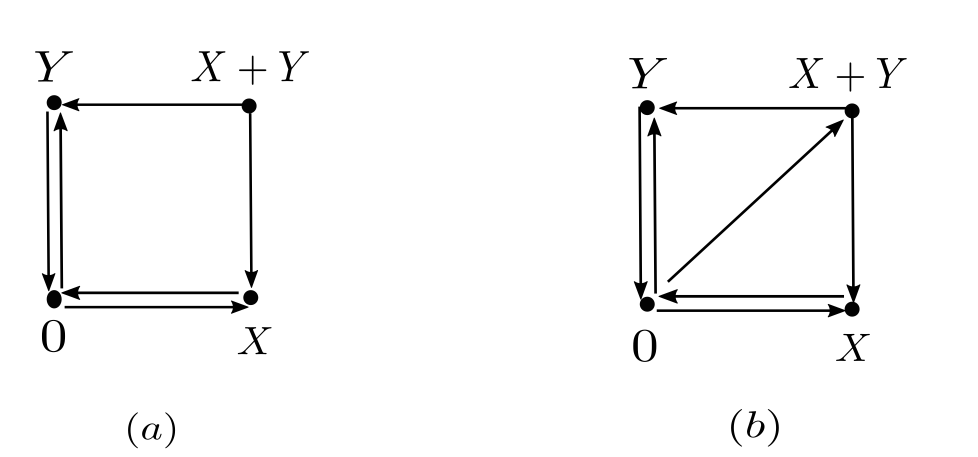}
\caption{(a) An E-graph $G$ represents the Thomas type model. (b) The weakly reversible E-graph $G'$ includes the dynamics of the network $G$ in (a).}
\label{fig:thomas}
\end{figure} 

Let $x$ and $y$ denote the concentration of uric acid and oxygen, respectively. The dynamics of this network is given by
\begin{equation} \notag
\begin{split}
\frac{d x}{d t} & = a - x - K xy,
\\ \frac{d y}{d t} & = b - y - K xy,
\end{split}
\end{equation}
where $a, b$ are positive parameters and $K$ represents the strength of \emph{substrate inhibition}.
Note that the network $G$ is strongly endotactic and all source vertices $\{ \emptyset, X, Y, X+Y \}$ lie on the boundary of the Newton polytope $\New (G)$. 
From Theorem \ref{thm:our_theorem_2}, there exists a weakly reversible single linkage E-graph $G'$, as illustrated in Figure~\ref{fig:thomas}(b), such that $G\sqsubseteq G'$. 

We now consider the linear program (P2), which has a solution as follows:
\begin{equation}
\begin{split} \notag
& J'_{0 \to X} = J'_{0 \to Y} = J'_{X+Y \to X} = J'_{X+Y \to Y} = 1, \ J'_{X \to 0} = J'_{Y \to 0} = J'_{0 \to X+Y} = 2,
\\& J_{0 \to X} = J_{0 \to Y} = 3, \ J_{X+Y \to X} = J_{X+Y \to Y} = 1, \ J_{X \to 0} = J_{Y \to 0} = 2.
\end{split}
\end{equation}
From Lemma \ref{lem:linear_program}, we obtain that $\dK(G, G') \neq \emptyset$.
Theorem \ref{thm:global_two_dim} further shows that $\dK(G)\subseteq \gK(G)$, therefore we conclude $\gK(G)\neq\emptyset$.
\end{example}

\begin{example}[Modified Selkov models {\cite{boros2022limit,brusselator,prigogine1968symmetry}}]
\label{ex:selkov}

Consider the network $G$ shown in Figure~\ref{fig:bruseelator}(a). This network is an example of a modified \emph{Selkov model}, commonly used to model glycolysis, a multi-step anaerobic process in which glucose is broken down into pyruvate.
A detailed analysis of the dynamics of this network, as a function of their parameters, reveals that it can exhibit complex behaviors such as chaos, oscillations, bifurcations, and turning instability.

\begin{figure}[!ht]
\centering
\includegraphics[scale=0.45]{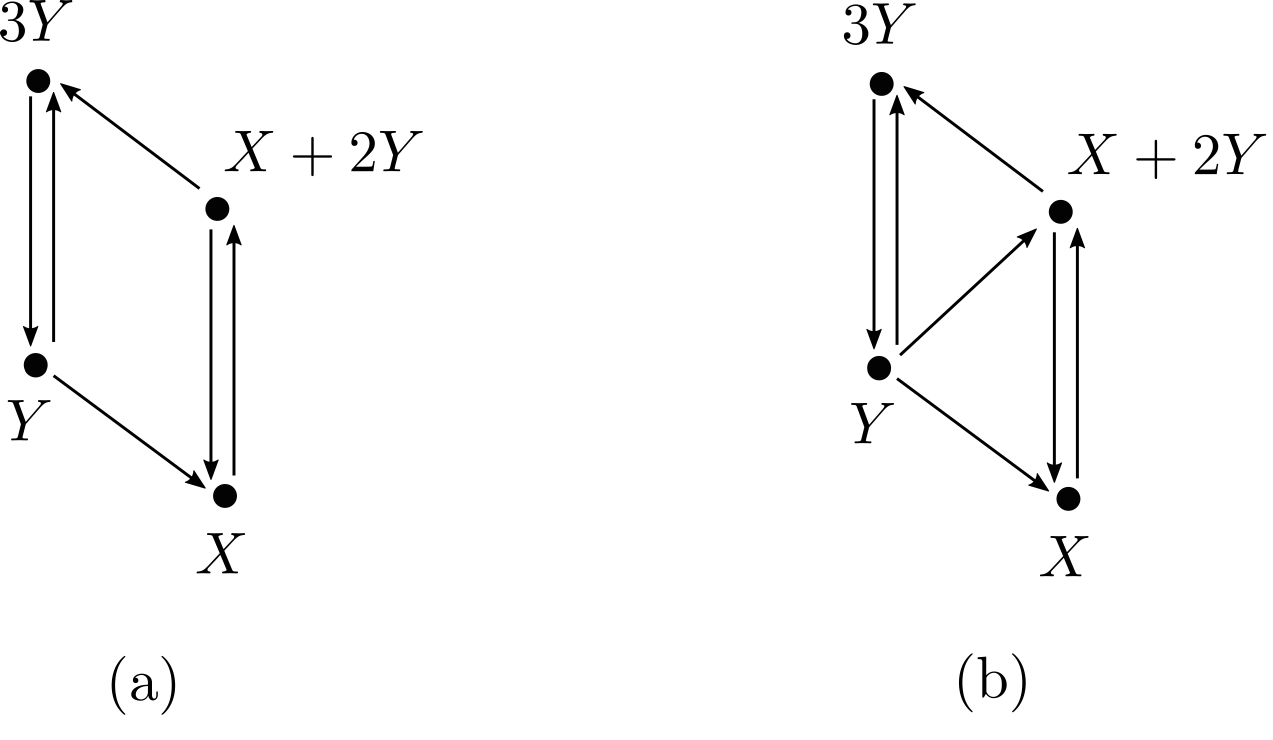}
\caption{(a) An E-graph $G$ represents the modified Selkov network \cite{boros2022limit}. (b) The weakly reversible E-graph $G'$ includes the dynamics of the network $G$ in (a).}
\label{fig:bruseelator}
\end{figure} 

Note that the network $G$ is strongly endotactic and all source vertices $\{ Y, X, 3Y, X+2Y \}$ lie on the boundary of the Newton polytope $\New (G)$. 
From Theorem \ref{thm:our_theorem}, there exists a weakly reversible single linkage E-graph $G'$, as illustrated in Figure~\ref{fig:bruseelator}(b), such that $G\sqsubseteq G'$.

We now consider the linear program (P2), which has a solution as follows:
\begin{equation}
\begin{split} \notag
& J'_{Y \to 3Y} = J'_{Y \to X + 2Y} = J'_{Y \to X} = J'_{X + 2Y \to X} = 1, \ J'_{X + 2Y \to 3Y} = J'_{X \to X + 2Y} = 2, \ J'_{3Y \to Y} = 3,
\\& J_{X + 2Y \to X} = 1, \ J_{Y \to 3Y} = J_{Y \to X} = J_{X + 2Y \to 3Y} = J_{X \to X + 2Y} = 2, \ J_{3Y \to Y} = 3.
\end{split}
\end{equation}
From Lemma \ref{lem:linear_program}, we obtain that $\dK(G, G') \neq \emptyset$.
Theorem \ref{thm:global_two_dim} further shows that $\dK(G)\subseteq \gK(G)$, therefore we conclude $\gK(G)\neq\emptyset$.
\end{example}

\section{Discussion}
\label{sec:disc}

In this paper, we derive conditions that enable us to establish the properties of dynamical inclusions between weakly reversible, endotactic, and strongly endotactic networks, assuming mass-action kinetics.
Our results extend several previous findings, particularly those discussed in~\cite{anderson2020classes}. 

A key idea in this context is that of \emph{Dynamical Equivalence} whereby it is possible for two different mass-action systems to exhibit the same dynamics. This phenomenon has a long history starting from the seminal work of Horn and Jackson \cite{horn1972general} to the more recent work of Craciun and Pantea~\cite{craciun2008identifiability}. Craciun, Jin, and Yu have established that for a mass-action system to be dynamically equivalent to a weakly reversible mass-action system, it suffices to consider only the source vertices of the original network~\cite{craciun2020efficient}. This has been extended by Deshpande~\cite{deshpande2023source} to strongly endotactic, endotactic, consistent, and conservative networks. Further Craciun, Deshpande, and Jin have used its properties to design efficient algorithms for detecting weakly reversible single linkage class realizations and weakly reversible deficiency one realizations~\cite{craciun2023weakly,craciun2023weaklysingle}.

Taking our cue from this, we establish sufficient conditions for the dynamics of a two-dimensional strongly endotactic network to be included in the dynamics of a weakly reversible network consisting of a single linkage class (see Section \ref{sec:strongly_endo_two}).
As a consequence, we examine the globally stable locus of the networks discussed in Section \ref{sec:sufficient} in Section \ref{sec:toric_global_locus}. In particular, we demonstrate that their locus is non-empty if solutions exist for the linear program (P2).
As pointed out in the Examples \ref{ex:thomas} and \ref{ex:selkov}, our results are particularly relevant for Thomas and Selkov models, which have significant applications in chemistry and biology. By investigating the disguised toric locus introduced in Section \ref{sec:toric_global_locus}, we can examine the globally stable locus of these models by analyzing the structure of the underlying reaction network.

While beyond the scope of our current work, the approach proposed here has potential applications for poly-exponential systems derived from reaction networks. A poly-exponential system generated by $(G, \bk)$ is the dynamical system on $\mathbb{R}_{>0}^n$ defined by
\begin{equation} \notag
\frac{d \bx}{d t} = \sum_{\by\rightarrow \by'} k_{\by\rightarrow \by'} e^{\langle \bx, \by \rangle} (\by' - \by).
\end{equation}
Based on the strong connections between poly-exponential systems and generalized Lotka-Volterra models, which are commonly used to describe the dynamics of biological systems, it is of interest to identify the conditions under which poly-exponential systems exhibit a non-empty globally stable locus. Some relevant ideas have been recently proposed in this area by \cite{poly_exp_2017}. 

Besides poly-exponential systems, there are other applications where our theory could be applied.
The first one is inspired by recent findings on the disguised toric locus \cite{craciun2024connectivity, craciun2023lower}. As discussed in this paper, the globally stable locus of certain networks is non-empty if their disguised toric locus is non-empty. In~\cite{craciun2023lower}, a lower bound on the dimension of the disguised toric locus of a reaction network is provided. Consequently, it is valuable to investigate the dimension of the globally stable locus in these networks.
Furthermore, our goal is to establish sufficient conditions for a network to possess a globally stable locus of full dimension within the parameter space.

Another area for future exploration is the study of dynamical inclusions between weakly reversible, endotactic, source-only, and target-only networks. It is known that the dynamics of an endotactic network are contained within the dynamics of a source-only network~\cite{anderson2020classes}. Extending this result to more generalized networks seems promising. Specifically, it would be valuable to classify the family of networks whose dynamics are contained within those of target-only networks. We hope to pursue this line of thought in future work.

\bibliographystyle{amsplain}
\bibliography{Bibliography}

\newpage

\appendix

\begin{appendices}

\section{Feasibility Problem}
\label{sec:p1}

Recall the question from Section \ref{sec:dyn_equiv}, we introduce the following nonlinear feasibility problem:

\medskip

\textbf{Feasibility problem (P1):}
Given an E-graph $G$, consider its complete graph $G_c = (V, E_c)$. Find an E-graph $G' \subseteq G_c$, such that for any $\bk \in \mathbb{R}^{|E|}_{>0}$ there exist 
\[
\bk' = (k'_{\by \rightarrow \by'})_{\by \rightarrow \by' \in E'} \in \mathbb{R}^{|E'|}_{>0}
\text{ and }
\bzeta = (\zeta_{\by \rightarrow \by'})_{\by \rightarrow \by' \in E'} \in \mathbb{R}^{|E'|}_{>0},
\]
satisfying for every $ \by_0 \in V$,
\begin{align}
\displaystyle \sum_{\by_0 \to \by_i \in E} k_{\by_0 \to \by_i} (\by_i - \by_0) 
& = \displaystyle \sum_{\by_0 \to \by_j \in E'} k'_{\by_0 \to \by_j} (\by_j - \by_0), 
\label{eq:flux_equiv_non} \\
\sum_{\by_0 \to \by \in E'} \zeta_{\by_0 \to \by} k'_{\by_0 \to \by} 
& = \displaystyle\sum_{\by' \to \by_0 \in E'} \zeta_{\by' \to \by_0} k'_{\by' \to \by_0}.
\label{eq:complex_balance}
\end{align}

\begin{lemma}

Let $G$ be an E-graph.
If the feasibility problem $(P1)$ has a solution, then there exists a weakly reversible realization E-graph $G' \subseteq_{wr} G_c$ such that $G \sqsubseteq G'$.
\end{lemma}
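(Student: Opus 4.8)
The plan is to extract from a solution of (P1) the two facts that together give the conclusion: that $G \sqsubseteq G'$, and that $G'$ is weakly reversible. The first is essentially a restatement of \eqref{eq:flux_equiv_non}, while the second is where the auxiliary vector $\bzeta$ and the balancing condition \eqref{eq:complex_balance} do the real work. Throughout, the graph $G' \subseteq G_c$ is the one produced by (P1), so all its vertices lie in $V$.

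\emph{Inclusion.} First I would fix an arbitrary $\bk \in \mathbb{R}^{|E|}_{>0}$ and take the vectors $\bk', \bzeta \in \mathbb{R}^{|E'|}_{>0}$ supplied by (P1). Equation \eqref{eq:flux_equiv_non} asserts that at each vertex $\by_0$ the net reaction vector agrees between $(G, \bk)$ and $(G', \bk')$. Grouping the two mass-action right-hand sides by source monomial $\bx^{\by_0}$ and invoking the linear independence of distinct monomials, matching net reaction vectors at every vertex is equivalent to the identity in Definition~\ref{def:dyn_equ}, so $(G, \bk) \sim (G', \bk')$. Since $\bk$ was arbitrary and the produced $\bk'$ is strictly positive on all of $E'$, this is precisely the definition of $G \sqsubseteq G'$.

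\emph{Weak reversibility.} For this I only need a single choice of $\bk$; fixing one, I define the flux $J'_{\by \to \by'} := \zeta_{\by \to \by'} \, k'_{\by \to \by'}$ on $E'$. Both factors are strictly positive, so $J'_{\by \to \by'} > 0$ on every edge, and \eqref{eq:complex_balance} reads $\sum_{\by_0 \to \by \in E'} J'_{\by_0 \to \by} = \sum_{\by' \to \by_0 \in E'} J'_{\by' \to \by_0}$ at every vertex $\by_0$; that is, $J'$ is a strictly positive circulation on the directed graph $G'$. The crux is then the standard fact that a digraph carrying a strictly positive circulation has every edge on a directed cycle. I would prove it by a cut argument: given an edge $\by_0 \to \by_1$, let $R$ be the set of vertices reachable from $\by_1$ in $G'$; summing the conservation law over $R$ cancels all internal flux and leaves only the flux on edges entering $R$ (with a minus sign), forcing no edge to enter $R$. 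Since $\by_0 \to \by_1$ carries positive flux and $\by_1 \in R$, we conclude $\by_0 \in R$, giving a directed path $\by_1 \to \cdots \to \by_0$ and hence a cycle through the edge. Applying this to every edge shows each connected component of $G'$ is strongly connected, so $G'$ is weakly reversible; combined with the previous step, $G' \subseteq_{wr} G_c$ and $G \sqsubseteq G'$.

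The only genuinely substantive ingredient is the circulation-to-cycle argument of the second step, which I expect to be the main obstacle — though it is a well-known graph-theoretic fact (equivalently an appeal to flow decomposition), and the strict positivity of $\bzeta$ and $\bk'$ guarantees the circulation is positive on \emph{every} edge, so the hypotheses hold exactly. Everything else is bookkeeping together with the monomial-independence identification of \eqref{eq:flux_equiv_non} with dynamical equivalence.
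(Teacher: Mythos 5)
Your proof is correct, and its skeleton coincides with the paper's: both split the claim into the inclusion $G \sqsubseteq G'$, read off from \eqref{eq:flux_equiv_non} by summing the vertex-wise identities against the monomials $\bx^{\by_0}$, and weak reversibility of $G'$, extracted from \eqref{eq:complex_balance} via the edge-wise product of $\bk'$ and $\bzeta$. The difference is in how that product is interpreted. The paper sets $\hat{\bk} = (k'_{\by \to \by'}\,\zeta_{\by \to \by'})_{\by \to \by' \in E'}$, observes that \eqref{eq:complex_balance} says precisely that $(G', \hat{\bk})$ is complex-balanced at the steady state $\bx_0 = (1,\ldots,1)$ (where every monomial equals $1$), and then cites Remark \ref{rmk:cb_property} — the classical Horn--Jackson fact that complex-balanced systems have weakly reversible underlying graphs. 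You instead read the same product as a strictly positive circulation on $G'$ and prove the needed graph-theoretic fact from scratch: your cut argument is sound, since the reachable set $R$ of $\by_1$ admits no outgoing edges by construction, so summing flow conservation over $R$ forces the total flux entering $R$ to vanish, and strict positivity then forces $\by_0 \in R$, putting every edge on a directed cycle — which is exactly the paper's definition of weak reversibility. What your route buys is self-containedness: it makes explicit that this step needs no mass-action or complex-balance machinery, only flow conservation on a digraph. What the paper's route buys is brevity, plus the stronger conclusion that $(G', \hat{\bk})$ is actually complex-balanced at $\mathbf{1}$, which is the interpretation that the analogous linear program $(P2)$ and the disguised toric locus results of Section \ref{sec:toric_global_locus} rely on. In effect, you have inlined the proof of the classical fact that the paper cites.
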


\begin{proof}

From the assumption, Definition \ref{def:dyn_equ} and \eqref{eq:flux_equiv_non} imply that for every $\bk\in\mathbb{R}^{|E|}_{>0}$, there exists $\bk' \in \mathbb{R}^{|E'|}_{>0}$ such that $(G,\bk) \sim (G',\bk')$. Hence, $G \sqsubseteq G'$.

On the other hand, consider the reaction rate vector $\hat{\bk} = (k'_{\by \rightarrow \by'} \zeta_{\by \rightarrow \by'})_{\by \rightarrow \by' \in E'} \in \mathbb{R}^{|E'|}_{>0}$ of $G'$. From \eqref{eq:complex_balance}, we can compute that $(G', \hat{\bk})$ is a complex-balanced system with the complex-balanced steady state $\bx_0 = (1, \ldots,1)$.
From Remark \ref{rmk:cb_property}, $G'$ is weakly reversible. Therefore, we conclude that $G' \subseteq_{wr} G_c$.
\end{proof}

\begin{remark}

From \cite{craciun2020efficient}, it has been proved that given an E-graph $G$, it suffices to focus on weakly reversible subgraphs of $G_c$.
Specifically, if there exists a weakly reversible E-graph $\tilde{G} \not\subseteq G_c$ that includes the dynamics of $G$, then there exists a corresponding weakly reversible subgraph $G' \subseteq_{wr} G_c$ such that $G \sqsubseteq G'$.
\end{remark}

\end{appendices}

\end{document}